\def\Dy#1{\Frac{\partial #1}{\partial y}}
\def\Dy_1y_1#1{\Frac{\partial^2 #1}{\partial y_1^2}}
\def\reff#1{{\rm(\ref{#1})}}
\def\be{\begin{eqnarray}}
\def\ee{\end{eqnarray}}
\def\b*{\begin{eqnarray*}}
\def\e*{\end{eqnarray*}}
\newtheorem{Theorem}{Theorem}[part]
\newtheorem{Definition}{Definition}[part]
\newtheorem{Proposition}{Proposition}[part]
\newtheorem{Lemma}{Lemma}[part]
\newtheorem{Corollary}{Corollary}[part]
\newtheorem{Remark}{Remark}[part]
\newtheorem{Example}{Example}[part]
\makeatletter \@addtoreset{equation}{section}
\def \dint{\displaystyle\int}
\def \Frac{\displaystyle\frac}
\def \be{\begin{eqnarray}}
\def \ee{\end{eqnarray}}
\def \b*{\begin{eqnarray*}}
\def \e*{\end{eqnarray*}}
\def \E{\mathbb{E}}
\def \F{\mathbb{F}}
\def \N{\mathbb{N}}
\def \R{\mathbb{R}}
\def \[{[\,\!\![}
\def \]{]\,\!\!]}
\def \1{{\bf 1}}
\def \esssup{{\rm esssup}}
\def \essinf{{\rm essinf}}
\def \ep{\hbox{ }\hfill$\Box$}
\def\reff#1{{\rm(\ref{#1})}}
\def\Cc{{\cal C}}
\def\Dc{{\cal D}}
\def\Ec{{\cal E}}
\def\Fc{{\cal F}}
\def\Hc{{\cal H}}
\def\k{\kappa}
\def\Sc{{\cal S}}
\def\Uc{{\cal U}}
\def\Qc{{\cal Q}}
\def\Rc{{\cal R}}
\begin{document}
\begin{frontmatter}
\title{Optimal stochastic control problem under model uncertainty with non-entropy  penalty }
\date{}
\runtitle{}

\author{\fnms{Wahid}
 \snm{FAIDI}\corref{}\ead[label=e1]{wahid.faidi@enit.rnu.tn}}
%\thankstext{T1}{The work of the first author is supported by the chair \textit{risque de cr\'edit}, F\'ed\'eration bancaire Fran\c{c}aise}
\address{University of Tunis El Manar\\
 Laboratoire de Mod\'elisation
 Math\'ematique et Num\'erique\\
 dans les Sciences  de l'Ing\'enieur, ENIT\\
 \printead{e1}
 }

\author{\fnms{Anis}
 \snm{MATOUSSI}\corref{}\ead[label=e2]{anis.matoussi@univ-lemans.fr}}
 \thankstext{t3}{Research partly supported by the Chair {\it Financial Risks} of the {\it Risk Foundation} sponsored by Soci\'et\'e G\'en\'erale, the Chair {\it Derivatives of the Future} sponsored by the {F\'ed\'eration Bancaire Fran\c{c}aise}, and the Chair {\it Finance and Sustainable Development} sponsored by EDF and Calyon }
\address{
 Universit\'e du Maine \\
 Institut du Risque et de l'Assurance\\
Laboratoire Manceau de Math\'ematiques\\\printead{e2}
 }

\author{\fnms{Mohamed}
 \snm{MNIF}\corref{}\ead[label=e3]{mohamed.mnif@enit.rnu.tn}}
\thankstext{t4}{This work was partially supported by the research project GEANPYL
of  FR 2962 du CNRS Math\'ematiques des Pays de Loire}
\address{
University of Tunis El Manar \\
Laboratoire de Mod\'elisation
 Math\'ematique et Num\'erique\\
 dans les Sciences  de l'Ing\'enieur, ENIT\\\printead{e3}}
% \affiliation{Some University}

\runauthor{W. Faidi,  A. Matoussi, M. Mnif}

%\pagestyle{myheadings} \markboth{Faidi, Matoussi and
%Mnif}{Robust utility maximization with a  general penalty term}
%\def\first{\vtop{\baselineskip=11pt
%\hbox to 125truept{\hss Wahid FAIDI\hss } \vskip2truept
% \hbox to
%125truept{\small\hss LAMSIN, ENIT  \hss} }}
% \def\second
%{\vtop{\baselineskip=11pt \hbox to 125truept{\hss Anis MATOUSSI \hss }
% \vskip2truept
% \hbox to 125truept{\small\hss University of Maine\hss} }}
%\def\third {\vtop{\baselineskip=11pt
%\hbox to 125truept{\hss Mohamed MNIF \hss } \vskip2truept
% \hbox to
%125truept{\small\hss LAMSIN, ENIT \hss} }}
%
%\begin{document}
%\author{\first \second \quad  \third }
%\maketitle

\begin{abstract}
In this paper, a stochastic control problem under model uncertainty with general penalty term is studied. Two types of penalties are considered. The first one is of type f-divergence penalty treated in the general framework of a continuous filtration. The second one called consistent time penalty studied in the context of a Brownian filtration.
In the case of consistent time penalty, we characterize the value process of our stochastic control problem as the unique solution of a class of quadratic backward stochastic differential equation with unbounded terminal condition.
\end{abstract}

\vspace{7mm}

\noindent {\bf Key words~:} Robust stochastic control, model uncertainty, Knightian uncertainty, Backward Stochastic Differential Equations,  utility maximization.

\vspace{5mm}

\noindent {\bf MSC Classification (2000)~:} 92E20, 60J60, 35B50.
\end{frontmatter}
\section{Introduction}
The problem of random systems under model uncertainty constitutes an important topic of research. It occurs e.g. in risk management (utility maximization problems for economic agents) and pricing/hedging (cheapest superreplication of a contingent claim). Model uncertainty is a major concern for practical applications since one has an imperfect knowledge of model (unknown drift, unknown volatility and correlation matrices, unknown jumps modeling. . . ). The utility maximization problem is concerned with optimal investment faced by an economic agent who has the opportunity to invest in a financial market consisting of a riskless asset and one risky as- set. Following the seminal work of Von Neumann-Morgenstern \cite{{VnM44}} assuming a utility function for representing the agent preference, with a given probability measure reflecting his views, the wealth is to be optimized over a set of admissible strategies (see \cite{Mer71} for the solution under log-normal assumptions about the risky asset).
\paragraph{Litterature review of the robust utility maximization problem.}
In the above formulations, a probability measure is fixed, meaning that the agent knows the "historical" probability for describing the state process dynamics. In reality, the agent may have some uncertainty on this probability, leading to several objective probability measures to consider. Preliminary results in the literature has been obtained in the case of dominated sets, namely with an objective reference probability measure (like drift uncertainty \cite{GS89}). In \cite{AHS03, HS01, HSTW06} the authors have discussed the basic problem of Robust Utility Maximization (RUM), penalized by a relative entropy term of the model uncertainty with respect to the reference probability measure, see also \cite{BMS05}.  On the other hand in \cite{HIM06, RElk00}, the authors have linked BSDEs with quadratic growth to the problem of utility maximization under constraints.
%The theory of BSDE is also linked to the theory of dynamic risk measures, see the recent results [20, 47, 116].
Nevertheless, as in the utility maximization problem described above, this approach can only deal with problems where the law of the underlying risk factors is assumed to be known or to belong to a dominated set of probability measures. Note that  the case when the volatilities/correlations are not precisely known, are not covered by our study.

 An other approach used to tackle the RUM is the duality theory  (see \cite{QUEN04, SCW05, SC08}).  Wittmuss \cite{WIT06} have extended these results to cover also the cases of consumption-investment strategies and random endowment (see also  \cite{BUR05} for some earlier results in that direction). 
 % The main importance of the duality method lies in the fact that the dual problem is often simpler than the primal one.

\paragraph{Main contributions.}
 In this paper,  we  work in the context of  Hansen and Sargent  (\cite{AHS03, HS01, HSTW06})  and Bordigoni, Matoussi and Schweizer \cite{BMS05}  to characterize the optimal preferences in terms of forward backward stochastic differential equation in the non-entropic penalty  case. Our approach is based on the dynamic programming  Bellman principle, and in the case of  time consistent penalty,  the value function is characterized  as the solution of a  class of  backward stochastic differential equations with quadratic generator and unbounded terminal condition. Our methodology is different from the dual approach  which is based on the Legendre-Frenchel transformation. Particularly, Föllmer and Gundel \cite{FG06}  studied  the $f$-divergence minimal martingale  measure  problem and the connexion with the theory  of optimal portfolio choice, in particular to robust utility maximization problem.  
Precisely,  we  study the following problem
\begin{equation*}
 \inf\limits_{Q\in\Qc}E_Q [\Uc^{\delta}_{0,T}+ \beta \Rc^{\delta}_{0,T}(Q)],
 \end{equation*}
 where
  $$\Uc^{\delta}_{t,T}:=\alpha \dint_t^TS_s^\delta U_sds+\bar{\alpha}S_T^\delta \bar{U}_T, $$
 with  $\alpha,\bar{\alpha}$ are two non negative parameters, $ \beta \in(0,+\infty)$,  $(U_t)_{0\leq t \leq T}$ a progressively measurable process, $\bar{U}_T$ a random variable $\Fc_T$-measurable and  $S^\delta$ is the discounting process defined by
$S_t^\delta := \exp(-\dint_0^t\delta_sds);0\leq t \leq T$
where $(\delta_t)_{0\leq t \leq T}$ is a progressively measurable  and bounded process. $\Rc_{t,T}^\delta(Q)$ denotes a penalty term which is written as a sum of a penalty rate and a final penalty. The cost functional
\begin{equation*}
c(w,Q):=\Uc_{0,T}^\delta+\beta \Rc_{0,T}^\delta(Q)
\end{equation*}
consists of two terms. The first is a $Q$-expected discounted utility with discount rate $\delta$, utility rate $U_s$ at time $s$ and terminal utility $\bar{U}_T$ at time $T$.  Usually, $(U_s)_{s \geq 0}$ comes from consumption and $\bar{U}_T$ is related to the terminal wealth. Note that we do not specify any financial model and  we assume only  that   $\bar{U}_T$  and $(U_s)_{s \geq 0}$  belong to Orlicz spaces. Therefore our approach covers a general setting. The second term, which depends only on $Q$, is a penalty term which can be interpreted as being a kind of "distance" between $Q$ and the historical probability measure $P$.
%In the presence of uncertainty or model ambiguity, sometimes also called the Knight uncertainty, the economic agent does not know the probability law governing the markets. The economic agent has a certain conjecture about the position of the true probability distribution $Q,$ but not with total confidence. So he regards it as being more plausible than other probabilities.
%% This plausibility can be measured using $\Rc_{0,T}^\delta(Q)$. The agent penalizes each sight probabilistic possible $Q$ in terms of penalty $\Rc_{0,T}^\delta(Q)$ and adopts an approach of the worst case by evaluating the profit of a given financial position.
  The role of proportionality parameter  $\beta$ is to measure the degree of confidence of the decision maker in the reference model $P$, or, in other words, the concern for the model erroneous specification.  The higher value of $\beta$ corresponds to more confidence.
%\\This work is the first step of robust utility problem problem. We plan to study the rest of the problem in another work.
\\In this paper we studied two classes of penalties.
The first class is the $f$-divergence penalty introduced by Cizar \cite{Ciz} given in our framework by:
\begin{equation}\label{f-divergence}
\Rc_{t,T}^\delta(Q):=\dint_t^T\delta_s\frac{S_s^\delta}{S_t^\delta} \frac{Z_t^Q}{Z_s^Q}f(\frac{Z_s^Q}{Z_t^Q})ds+\frac{S_T^\delta}{S_t^\delta} \frac{Z_t^Q}{Z_T^Q}f(\frac{Z_T^Q}{Z_t^Q}); \forall 0\leq t \leq T.
\end{equation}
where $f$ is a convex function.
In this case,  set $\mathcal{Q}$ consists of all models $Q$  absolutely continuous with respect to $P$ whose density process (with respect to $P$) $Z^Q$ satisfies $\E_P[\displaystyle{f(Z^Q_T)}]<+\infty.$
\\The second  class called consistent time penalty studied in the context of a  filtration generated by a Brownian motion $(W_t)_{0\leq t \leq T}.$
\begin{equation}\label{consistent}\Rc_{t,T}^\delta(Q):=\dint_t^T\delta_s\frac{S_s^\delta}{S_t^\delta} (\int_t^s h(\eta_u)du)ds+\frac{S_T^\delta}{S_t^\delta}\int_t^T h(\eta_u)du ; \forall 0\leq t \leq T
\end{equation}
where $h$ is a convex function and the density process of $Q^\eta$ with respect to $P$ can be written: $$\frac{dQ^\eta}{dP}=\Ec(\int_0^.\eta_u dW_u).$$
In this case, the set $\mathcal{Q}$ is formed by all models $Q^\eta $ absolutely continuous with respect to $P$ such that $\E_{Q^\eta}[\dint_0^Th(\eta_s)ds]<+\infty.$ Using HJB equation technics, Schied \cite{SC08} studied the same problem when $\delta$ is constant and the process $\eta$ takes values in a compact convex set in $\mathbb{R}^2$.  Finally,  more recently  Laeven and Stadje \cite{LS12} have studied the case of consistent time penalty by using  another proof of the existence result and  by assuming a bounded terminal condition. \\

The paper is organized as follows.  The robust utility problem where the penalty is modeled by the $f$-divergence is studied in the next section . In section 3, we present  the class of consistent time penalty. In particular,  we characterize in this case  the value process for our control problem as the unique solution of a generalized class of  quadratic BSDEs. In subsection \ref{exampleprotfolio}, we study a portfolio investment choice problem where the uncertainty is modeled via a consistent time penalty. In this case the RUM problem is equivalent to the classical expected utility maximization when the underlying model is known and the utility function could be seen as an modified one. When the utility is a power function and the risky asset follows the Black-Scholes model, we give an explicit solution for the optimal strategy $\pi^*$ of investment in the risky asset. The originality of the formula comes from the explicit dependence of the entrpoic penalty term in the strategy $\pi^*.$   Finally, we give  some technical results in the  Appendix.
% we prove the existence of a unique optimal probability measure $Q^*$ equivalent to $P$ for our optimization problem. The third section is devoted to

\section{Class of f-divergence penalty}
\subsection{The setting}
This section gives a precise formulation of our optimization problem and introduces our notations for later use.
We start with a filtered probability space $(\Omega,\Fc, \F,P)$ over a finite time horizon $T\in (0,+\infty).$
The filtration $\F = (\Fc_t)_{0\leq t \leq T}$ satisfies the usual conditions of right-continuity and $P$-completeness.
\\For any probability measure $Q\ll P$ on $\Fc_T$, the density process of $Q$ with respect to $P$ is the RCLL $P$-martingale $Z^Q = (Z^Q_t )_{0\leq t \leq T}$ with
$$Z^Q_t = \frac{dQ}{dP}\mid_{\Fc_t}= \E_P[\frac{dQ}{dP}\mid{\Fc_t}],\;\; \textrm{for all} \;\; 0\leq t \leq T.$$
Since $Z^Q$ is closed on the right by $Z^Q_t = \frac{dQ}{dP}\mid_{\Fc_t}$, $Z^Q$  can be identified with $Q$.
\\Let $f : [0,+\infty) \mapsto \R$ be a continuous and strictly convex function  satisfying:
\begin{enumerate}
\item[\textbf{(H1)}] $f(1)=0.$
\item[\textbf{(H2)}] There is a constant $\kappa \in \R_+$ such that $f(x)\geq -\kappa, \;\;\textrm{for all}\;\; x\in (0,+\infty).$
\item[\textbf{(H3)}] $\lim\limits_{x \mapsto +\infty}\dfrac{f(x)}{x}=+\infty.$
\item[\textbf{(H4)}]   $f$ is differentiable on $(0,+\infty)$ and $f'(0)=\lim\limits_{x\rightarrow 0^+}f'(x)=-\infty.$
\end{enumerate}
\begin{Remark}
Note that the Assumption \textbf{(H2)} can be relaxed . Indeed, since $f$ is convex, then $f(x)\geq f'(1)(x-1)+f(1)=f'(1)x-f'(1)$ and so $f(x)-f'(1)x \geq -f'(1).$ Therefore, one could replace  $f(x)$ by $f(x)-f'(1)x, \forall x\geqslant 0 .$
\end{Remark}
Our basic goal is to
\begin{equation}\label{min1} \textrm{minimize the functional}\; Q \mapsto \Gamma(Q) := \E_Q[c(.,Q)]\end{equation}
over a suitable class of probability measures $Q \ll P$ on $\Fc_T$, where the cost functional $c(.,Q)$ is defined by
\begin{equation}\label{costfunction}
c(w,Q):=\Uc_{0,T}^\delta+\beta \Rc_{0,T}^\delta(Q),
\end{equation}
with the utility term given by
 $$\Uc^{\delta}_{0,T}:=\alpha \dint_0^TS_s^\delta U_sds+\bar{\alpha}S_T^\delta \bar{U}_T, $$
and the penalty term is
$$\Rc_{0,T}^\delta:=\dint_0^T\delta_sS_s^\delta \frac{f(Z_s^Q)}{Z_s^Q}ds+S_T^\delta \frac{f(Z_T^Q)}{Z_T^Q},\;\; \textrm{for all} \;\; 0\leq t \leq T.$$

\begin{Definition}
For a convex function $\varphi$ we define the following functional spaces:
\\$L^{\varphi} $ is the space of all ${\cal{ F}}_T $ measurable random variables $X $ with
 $$E_P\left[\varphi\left(\gamma \vert X  \vert\right)\right]<\infty, \qquad \hbox{ for all }
 \gamma>0,$$
 $D^{\varphi}_0$ is the space of all progressively measurable processes $X={(X_t)}_{0\le t\le T}$ with
 $$E_P\left[\varphi\left(\gamma {~ \rm{ess}\sup}_{0\le t\le T}|X_t|\right)\right]<\infty , \qquad
 \hbox{ for all } \gamma>0,$$
 $D^{\varphi}_1$ is the space of all progressively measurable processes $X={(X_t)}_{0\le t\le T}$ such that
$$E_P\left[\varphi\left(\gamma\int_0^T |X_s|ds\right)\right]<\infty,  \qquad \hbox{ for all } \gamma >0.$$
%${\cal M}_0^p(\mathbb{P})$ is the space of all   {$\mathbb{P}$-martingales}
%$M={(M_t)}_{0\le t\le T}$ with $M_0=0$  and $$E_{\mathbb{P}}\big[\sup_{0\le t\le
%T}|M_t|^p\big]<\infty. $$
\end{Definition}
\begin{Remark}
The spaces, $L^{\varphi} $, $D^{\varphi}_0$ and $D^{\varphi}_1$  are called Orlicz spaces. For more detail see \cite{Rao}
\end{Remark}
\begin{Definition}
For any probability measures $Q$ on $(\Omega,\Fc)$, we define the f-divergence of  $Q$ with respect to $P$ by:
$$
d(Q|P):=\left\{\begin{array}{cc}
             \E_P[f(\frac{dQ}{dP}|_{\Fc_T})] & \textrm{if} \; Q \ll P \;\textrm{on}\; \Fc_T \\
             +\infty & \textrm{otherwise.}
           \end{array}\right.
$$
\\We denote by $\Qc_f$ the space of all probability measures $Q$ on $(\Omega,\Fc)$ with $Q \ll P$ on $\Fc_T,$  $Q = P$ on $\Fc_0$ and $d(Q|P) < +\infty.$ The set $\Qc_f^e$ is defined as follows $$\Qc_f^e:=\{Q \in \Qc_f | Q\approx P \; \textrm{on} \; \Fc_T \}.$$
\end{Definition}
\begin{Example}
\begin{enumerate}
\item The relative entropy: If $f(x):=x\ln x,$ then $d(Q|P)$ is called relative entropy and is denoted by $H(Q|P).$
\item The Bergman divergence: it matches to the function $f(x):=x\ln x-x.$
%\item The power divergence: the case when  $f(x):= x^p$ for $p<0.$
\end{enumerate}
\end{Example}
The conjugate function of $f$ on $\R_+$ is defined  by:
\begin{equation}\label{conjugate function}
f^{\ast }(x)=\sup\limits_{y>0}{(xy-f(y))}.
\end{equation}
$f^{\ast }$ is a convex function, nondecreasing, nonnegative and satisfies:
\begin{equation}\label{convex inequality1}
xy\leq f^{\ast }(x)+f(y) ,\;\;\textrm{for all} \;\; x\in \R_+ \;\; \mbox{and}\;\; y>0,
\end{equation}
and also
\begin{equation}\label{convex inequality2}
xy\leq\frac{1}{\gamma} [f^{\ast }(\gamma x)+f(y)] ,\;\;\textrm{for all}\;\; x\in \R_+, \;\; \gamma>0\;\; \mbox{and} \;\; y>0.
\end{equation}
For a precise formulation of (\ref{min1}), we now assume:
\begin{enumerate}
\item[(\textbf{\textrm{A1}})]$\delta $ is positive and bounded by $\left\Vert
\delta \right\Vert _{\infty }.$
\item[(\textbf{\textrm{A2}})] Process $U$ belongs to $D_1 ^{f^*}$ and the random variable $\bar{U}_T$ is in $L^{f^*}$
\end{enumerate}
\begin{Remark}\label{boundness of fdiv}
\begin{enumerate}
\item[1-] Assumption (\textbf{H2}) implies that :
\begin{equation}\label{absolute}
\mid f(x)\mid \leq f(x)+2\kappa, \quad \forall x \geq 0.
\end{equation}
%Indeed we have
%\begin{equation*}
%\begin{split}
%&\mid f(x)\mid-\mid -\kappa \mid
%\\&\leq \mid\mid f(x)\mid-\mid -\kappa \mid\mid
%\\& \leq \mid f(x)+\kappa \mid = f(x)+\kappa.
%\end{split}
%\end{equation*}
\item[2-] In the case of entropic penalty, we have $f(x)=x\ln(x)$ and then $f^{\ast }(x)=\exp({x-1}). $ As in Bordigoni, Matoussi and Schweizer \cite{BMS05}, the integrability conditions are formulated as
$$
\mathbb{E}_P\left[\displaystyle \exp({\lambda \int_{0}^{T}|U(s)|ds})\right] <+\infty \;\; \textrm{and} \;\;
\mathbb{E}_P\left[ \displaystyle \exp({\lambda  |\bar{U}_T|})\right] <+\infty\;\;\textrm{for all} \;\; \lambda>0.
$$
\end{enumerate}
\end{Remark}
\subsection{Existence of  optimal probability measure}
The main result of this section is to prove that the problem (\ref{min1}) has a unique solution $Q^*\in \Qc_f.$ Under some additional assumptions, we prove that $Q^*$ is equivalent to $P$. This is proved for a general filtration $\F$.
This section begins  by establishing some estimates for later use.
\begin{Proposition}\label{prop1}
Under assumptions \textbf{(A1),(A2)} and \textbf{(H2)} , we have:
\begin{enumerate}
\item $c(.,Q) \in L^1(Q)$,
\item $\Gamma (Q)\leq C(1+d(Q|P))$, where $C$  is a positive constant  depending only on $\alpha ,\bar{\alpha} ,\beta ,\delta ,T,(U_s)_{s\in[0,T]}$ and $\bar{U}_T.$
\end{enumerate}
In particular $\Gamma(Q)$ is well-defined and finite for every $Q\in \Qc_f$.
\end{Proposition}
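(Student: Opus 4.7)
The plan is to prove both claims simultaneously by obtaining a bound of the form $E_Q[|c(\cdot,Q)|] \leq C(1+d(Q|P))$. Once this holds, both $c(\cdot,Q) \in L^1(Q)$ and the inequality $\Gamma(Q) \leq C(1+d(Q|P))$ follow, and $\Gamma(Q)$ is finite whenever $Q \in \Qc_f$. I would split the cost into the utility part $\Uc_{0,T}^\delta$ and the penalty part $\beta\Rc_{0,T}^\delta(Q)$ and treat them separately.

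For the utility part, since $0 \leq S_s^\delta \leq 1$ by (A1), one has $|\Uc_{0,T}^\delta| \leq \alpha \int_0^T |U_s|\,ds + \bar\alpha |\bar U_T|$. Writing $E_Q[\cdot] = E_P[Z_T^Q \cdot]$ and applying the Fenchel-Young inequality (\ref{convex inequality2}) with $y=Z_T^Q$, some positive $\gamma$, and $x = \alpha\int_0^T|U_s|\,ds + \bar\alpha |\bar U_T|$, followed by the elementary convex inequality $f^\ast(a+b) \leq \tfrac12 f^\ast(2a) + \tfrac12 f^\ast(2b)$ (using that $f^\ast$ is convex, non-decreasing, and non-negative), reduces matters to the finiteness of $E_P[f^\ast(2\gamma\alpha\int_0^T|U_s|\,ds)]$ and $E_P[f^\ast(2\gamma\bar\alpha |\bar U_T|)]$, both guaranteed by (A2) since $U \in D_1^{f^\ast}$ and $\bar U_T \in L^{f^\ast}$. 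This yields $E_Q[|\Uc_{0,T}^\delta|] \leq C_1 + \tfrac{1}{\gamma} d(Q|P)$.

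For the penalty part, the key observation is that $\frac{f(Z_s^Q)}{Z_s^Q}$ (with the convention $0/0 = 0$) is $\Fc_s$-measurable, so by the tower property and the martingale property of $Z^Q$,
\begin{equation*}
E_Q\!\left[\frac{f(Z_s^Q)}{Z_s^Q}\right] = E_P\!\left[Z_T^Q \frac{f(Z_s^Q)}{Z_s^Q}\indi_{\{Z_s^Q>0\}}\right] = E_P\!\left[f(Z_s^Q)\indi_{\{Z_s^Q>0\}}\right] \leq E_P[f(Z_s^Q)] + \kappa,
\end{equation*}
where the bound uses (H.2). Because $f$ is convex and $Z^Q$ is a $P$-martingale, Jensen's inequality shows that $(f(Z_t^Q))_t$ is a $P$-submartingale, so $E_P[f(Z_s^Q)] \leq d(Q|P)$. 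To pass from $f$ to $|f|$, use (\ref{absolute}): $|f(x)| \leq f(x) + 2\kappa$, giving
\begin{equation*}
E_Q\!\left[\frac{|f(Z_s^Q)|}{Z_s^Q}\right] \leq E_Q\!\left[\frac{f(Z_s^Q)}{Z_s^Q}\right] + 2\kappa\, E_Q\!\left[\frac{1}{Z_s^Q}\right] \leq d(Q|P) + 3\kappa,
\end{equation*}
where $E_Q[1/Z_s^Q] = E_P[\indi_{\{Z_s^Q>0\}}] \leq 1$ by the same tower property argument. Integrating in $s$ with $\|\delta\|_\infty < \infty$ and $S^\delta \leq 1$, and adding the terminal term, bounds $E_Q[|\Rc_{0,T}^\delta(Q)|]$ by a constant times $d(Q|P) + \kappa$.

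Combining the two estimates gives the desired inequality with a constant $C$ depending only on $\alpha, \bar\alpha, \beta, \|\delta\|_\infty, T, \kappa$ and the $D_1^{f^\ast}$- and $L^{f^\ast}$-norms of $U$ and $\bar U_T$. The most delicate step is the penalty bound, where one must carefully exploit the $\Fc_s$-measurability of $f(Z_s^Q)/Z_s^Q$ and the convention on $\{Z_s^Q = 0\}$ to convert the $Q$-expectation into a $P$-expectation of $f(Z_s^Q)$; the utility bound is then a routine application of the Fenchel-Young duality.
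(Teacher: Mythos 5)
Your proof is correct and follows essentially the same route as the paper: bound the utility part via the Fenchel--Young inequality and Assumption (A2), and the penalty part by using the tower/martingale property to reduce $E_Q[f(Z_s^Q)/Z_s^Q]$ to $E_P[f(Z_s^Q)]$, Jensen's inequality to get $E_P[f(Z_s^Q)]\leq d(Q|P)$, and $|f|\leq f+2\kappa$ to control absolute values. Your treatment is in fact slightly more careful than the paper's on the set $\{Z_s^Q=0\}$ and in making explicit the convexity step $f^{\ast}(a+b)\leq \tfrac12 f^{\ast}(2a)+\tfrac12 f^{\ast}(2b)$ needed to integrate $f^{\ast}(R)$, but the substance is identical.
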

 \begin{proof}
\begin{enumerate}
\item We first prove that for all $ Q \in  Q_f$, $c(.,Q)$ belongs to $ L^1(Q)$. Set  $R:=\alpha \int_{0}^{T}|U_s|ds+\bar{\alpha}|\bar{U}_T|$, we get
\begin{equation*}
 |Z_T^Qc(.,Q)| \leq  Z_T^Q R +\left \Vert \delta \right\Vert _{\infty } Z_T^Q \int_{0}^{T}\left\vert \frac{f(Z_{s}^{Q})}{Z_s^Q}\right\vert ds+\left\vert f(Z_{T}^{Q})\right\vert.
\end{equation*}
By  the estimate  (\ref{convex inequality1}), we have $Z_{T}^{Q}R \leq f(Z_{T}^{Q})+f^{\ast }(R).$
From assumption \textbf{(A2)}, the variable random $f^{\ast }(R)$ is in $L^1(P)$ and from Remark \ref{boundness of fdiv}, we get that   for all $ Q \in  Q_f$, $ f(Z_T^Q)$ belongs to $  L^1(P).$
It remains to show that   $Z_T^Q \displaystyle\int_{0}^{T}\left\vert \frac{f(Z_{s}^{Q})}{Z_s^Q}\right\vert ds $ belongs to $ L^1(P).$
By Tonelli-Fubini's Theorem, we have
\begin{equation*}
\begin{split}
\E_P\big[Z_T^Q \displaystyle\int_{0}^{T}\left\vert \frac{f(Z_{s}^{Q})}{Z_s^Q}\right\vert ds \big]&=\displaystyle\int_{0}^{T}\E_P\big[Z_T^Q \left\vert \frac{f(Z_{s}^{Q})}{Z_s^Q}\right\vert\big] ds\\&=\displaystyle\int_{0}^{T}\E_P\big[Z_s^Q \left\vert \frac{f(Z_{s}^{Q})}{Z_s^Q}\right\vert\big] ds=\displaystyle\int_{0}^{T}\E_P\big[\left\vert f(Z_{s}^{Q})\right\vert\big]ds.
\end{split}
\end{equation*}
Jensen's inequality allows
$$f(Z_{s}^{Q})=f\left( \E_{P}\left[ Z_{T}^{Q}|\mathcal{F}_{s}\right] \right) \leq \E_{P}\left[ f\left( Z_{T}^{Q}\right) |\mathcal{F}_{s}\right].$$
By taking the expectation under $P,$ we obtain
\begin{equation}\label{c1}\E_{P}\left( f(Z_{s}^{Q})\right) \leq \E_{P}\left[ f\left( Z_{T}^{Q}\right) \right]. \end{equation}
Consequently, from inequality (\ref{absolute}) we have
\begin{equation}\label{c1'}
\E_P\big[\vert f(Z_{s}^{Q})\vert\big]\leq \E_{P}\left[ f\left( Z_{T}^{Q}\right) \right]+2\kappa,
\end{equation} and so,  $s\mapsto\E_P\big[\vert f(Z_{s}^{Q})\vert\big]$ is in $L^1([0,T]).$  Whence,   $Z_T^Q
\displaystyle\int_{0}^{T}\left\vert \frac{f(Z_{s}^{Q})}{Z_s^Q}\right\vert ds $ belongs  to $ L^1(P).$
\item From the definition of $\Gamma$, we have
\begin{equation*}
 \Gamma (Q) \leq \E_{P}\left[ Z_{T}^{Q}R\right] +\beta \E_{P}\left[\left\Vert \delta \right\Vert _{\infty }\int_{0}^{T}\left\vert f(Z_{s}^{Q})\right\vert ds+\left\vert f(Z_{T}^{Q})\right\vert \right].
\end{equation*}
By the inequality (\ref{c1'}), we have
\begin{equation*}
\begin{split}
\E_{P}\left[\left\Vert \delta \right\Vert _{\infty}\int_{0}^{T}\vert f(Z_{s}^{Q})\vert ds+\vert f(Z_{T}^{Q})\vert \right]&=\left\Vert \delta \right\Vert _{\infty}\int_{0}^{T}\E_{P}\left[\vert f(Z_{s}^{Q})\vert\right] ds+\E_{P}\left[\vert f(Z_{T}^{Q})\vert \right]\\&\leq \left\Vert \delta \right\Vert _{\infty}T\Big(\E_{P}\left[ f\left( Z_{T}^{Q}\right) \right]+2\kappa\Big)+\E_{P}\left[ f\left( Z_{T}^{Q}\right) \right]+2\kappa,
\end{split}
\end{equation*}
and consequently, $$\Gamma (Q)\leq \E_{P}\left[ f^{\ast }(R)\right] +2 \kappa \beta
(\left\Vert \delta \right\Vert _{\infty }T+1)+(1+\beta \left\Vert \delta
\right\Vert _{\infty }T+\beta )d(Q|P).$$
We define the constant $C$ by:
$$C:=\max \left( E_{P}\left[ f^{\ast }(R)\right]+2\kappa\beta (\left\Vert
\delta \right\Vert _{\infty }T+1),(1+\beta \left\Vert \delta \right\Vert
_{\infty }T+\beta )\right) . $$
From assumptions (\textbf{A1})-(\textbf{A2}), $C$  is finite, positive and answers the question.
\end{enumerate}
\ep
 \end{proof}
%\begin{Corollary} \label{qint} Assume \textbf{(A1)-(A2)}. Then $c(.,Q) \in L^1(Q)$ for every $Q \in \Qc_f$ ,
%and in particular $\Gamma(Q)$ is well-defined and finite for every $Q\in \Qc_f$ .
%\end{Corollary}
A more precise estimation of $\Gamma ( Q)$ will be needed:
\begin{Proposition}\label{cont1}
There is a positive constant $K$ which depends only on
 $\alpha ,\bar{\alpha} ,\beta ,\delta ,T,(U_s)_{s\in[0,T]},\bar{U}_T$
such that
$$
d(Q|P)\leq K( 1+\Gamma ( Q)).
$$

In particular $\inf\limits_{Q\in Q_{f}}$ $\Gamma (Q) >-\infty. $
\end{Proposition}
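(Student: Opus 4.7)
The plan is to lower-bound $\Gamma(Q)$ by an affine function of $d(Q|P)$ with strictly positive slope and then invert. Writing $\Gamma(Q)=\E_P[Z_T^Q\Uc_{0,T}^\delta]+\beta\E_P[Z_T^Q\Rc_{0,T}^\delta(Q)]$ and exploiting the $P$-martingale property of $Z^Q$ exactly as in the proof of Proposition \ref{prop1} (with Fubini and $\E_P[Z_T^Q/Z_s^Q\mid\Fc_s]=1$), the penalty contribution reduces to $\beta\int_0^T\delta_s\E_P[S_s^\delta f(Z_s^Q)]ds+\beta\E_P[S_T^\delta f(Z_T^Q)]$, and I treat the utility and penalty parts separately.

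For the utility piece, set $R:=\alpha\int_0^T|U_s|ds+\bar\alpha|\bar U_T|$; since $\delta\ge 0$ gives $0\le S^\delta\le 1$, one has $\Uc_{0,T}^\delta\ge -R$, and Young's inequality \reff{convex inequality2} with parameter $\gamma>0$ yields
\[\E_P[Z_T^Q R]\le \frac{1}{\gamma}\E_P[f^{*}(\gamma R)]+\frac{1}{\gamma}d(Q|P),\]
where $\E_P[f^{*}(\gamma R)]<\infty$ for every $\gamma>0$ by assumption (A2). For the penalty piece, the lower bound $f\ge-\kappa$ together with $\int_0^T\delta_s S_s^\delta ds=1-S_T^\delta\le 1$ gives $\beta\int_0^T\delta_s\E_P[S_s^\delta f(Z_s^Q)]ds\ge-\beta\kappa$; rewriting $S_T^\delta f(Z_T^Q)=S_T^\delta(f(Z_T^Q)+\kappa)-\kappa S_T^\delta$ and using $f+\kappa\ge 0$ together with $S_T^\delta\ge e^{-\|\delta\|_\infty T}$ (from (A1)) yields $\E_P[S_T^\delta f(Z_T^Q)]\ge e^{-\|\delta\|_\infty T}d(Q|P)-\kappa$. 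Collecting,
\[\Gamma(Q)\ge\Bigl(\beta e^{-\|\delta\|_\infty T}-\tfrac{1}{\gamma}\Bigr)d(Q|P)-C',\]
with $C':=\tfrac{1}{\gamma}\E_P[f^{*}(\gamma R)]+2\beta\kappa<\infty$.

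Choosing $\gamma:=2/(\beta e^{-\|\delta\|_\infty T})$ makes the coefficient of $d(Q|P)$ equal to $\tfrac{\beta}{2}e^{-\|\delta\|_\infty T}>0$, and solving for $d(Q|P)$ delivers the announced estimate $d(Q|P)\le K(1+\Gamma(Q))$ with an explicit $K$ depending only on $\beta,\|\delta\|_\infty,T,\kappa$ and $\E_P[f^{*}(\gamma R)]$. Since Jensen's inequality gives $d(Q|P)\ge f(1)=0$, this same chain of inequalities forces $\Gamma(Q)\ge -C'$ uniformly in $Q\in\Qc_f$, so $\inf_{Q\in\Qc_f}\Gamma(Q)>-\infty$. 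The main obstacle is reconciling the signs: the utility term is a priori unbounded below and only controllable after trading a fraction $\tfrac{1}{\gamma}d(Q|P)$ for $f^{*}(\gamma R)$ via Young's inequality, while the penalty term only produces the correct coefficient in front of $d(Q|P)$ after the $\kappa$-shift that leans on boundedness of $\delta$ (A1) and (H.2); the final calibration is possible precisely because (A2) makes $\E_P[f^{*}(\gamma R)]$ finite for \emph{every} $\gamma>0$, so $\gamma$ can be chosen large enough that the net coefficient of $d(Q|P)$ stays strictly positive.
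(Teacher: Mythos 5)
Your proof is correct and follows essentially the same route as the paper: bound the utility term via $\Uc_{0,T}^\delta\ge -R$ and Young's inequality \reff{convex inequality2}, bound the rate penalty below by a constant using $f\ge-\kappa$, extract the coefficient $\beta e^{-\|\delta\|_\infty T}$ of $d(Q|P)$ from the terminal penalty via the $\kappa$-shift, and then take $\gamma$ large. The only (immaterial) differences are that the paper carries out the estimates conditionally on $\Fc_\tau$ so it can reuse inequality \reff{ineq7} later for the integrability of $J(\tau,Q)$, whereas you work directly at $\tau=0$, and your bound $\int_0^T\delta_sS_s^\delta\,ds\le 1$ gives a marginally sharper constant than the paper's $T\|\delta\|_\infty$.
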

 \begin{proof}
From Bayes' formula, we have:
\begin{equation*}
\begin{split}
\E_Q [ \dint_{0}^{T}\delta _sS_s^\delta\frac{f( Z_s^Q)}{Z_s^Q}
ds|\Fc_\tau] &= \frac{1}{Z_\tau^Q}\E_{P}[\dint_0^T\delta _sS_s^\delta f( Z_s^Q)ds|\Fc_\tau]
 \geq -\frac{1}{Z_\tau^Q}T \kappa \parallel \delta \parallel_\infty.
\end{split}
\end{equation*}
In the same way, by using $\exp(-T\parallel\delta\parallel _{\infty})\leq S_{T}^{\delta }\leq 1$, we get:
\begin{equation}\label{inequa6}
\begin{split}
\E_{Q}[S_T^\delta \dfrac{f(Z_T^Q)}{Z_T^Q}|\Fc_\tau]& =\frac{1}{Z_\tau^Q}\E_P[
S_T^\delta f( Z_{T}^{Q})|\mathcal{F}_{\tau}] \\& = \frac{1}{Z_\tau^Q} \E_P\big[
S_T^\delta [f( Z_{T}^{Q})-\kappa+\kappa]|\mathcal{F}_{\tau}\big]
\\& \geq \frac{1}{Z_\tau^Q}(-\kappa + e^{-T \parallel\delta \parallel _{\infty
}}(\kappa +\E_P[ f( Z_{T}^{Q})|\mathcal{F}_{\tau}])
\\& \geq \frac{1}{Z_\tau^Q}(-\kappa + e^{-T \parallel\delta \parallel _{\infty
}}\E_P[ f( Z_{T}^{Q})|\mathcal{F}_{\tau}]).
\end{split}
\end{equation}
We set $R_\tau:= \alpha \int_{\tau}^{T}|U_s|ds+\bar{\alpha}|\bar{U}_T|$ and $R=R_0.$ By using $0 \leq S^\delta \leq 1,$  and Bayes' formula, we have
\begin{equation}\label{inequa7}
\begin{split}
\E_Q[\Uc^\delta_{0,T}|\Fc_\tau]&\geq -\E_Q[R|\Fc_\tau]\\&=-\frac{1}{Z_\tau^Q}\E_P[Z_T^Q R|\Fc_\tau].
\end{split}
\end{equation}
By using (\ref{convex inequality2}) and since $f^*$ is nondecreasing, we obtain:
\begin{equation}\label{inequa7bis}
\begin{split}
\E_P[Z_T^QR|\Fc_\tau]&\leq \frac{1}{\gamma}\E_P[f(Z_T^Q)+f^*(\gamma R)|\Fc_\tau]
\\&\leq\frac{1}{\gamma}\E_P[f(Z_T^Q)|\Fc_\tau]+\frac{1}{\gamma}\E_P[f^*(\gamma R)|\Fc_\tau].
\end{split}
\end{equation}
Thus, plugging inequality (\ref{inequa7bis}) into inequality (\ref{inequa7}) and adding the inequality (\ref{inequa6}) we get:
\begin{equation}\label{ineq7}
\begin{split}
\E_Q[c(.,Q)|\Fc_\tau]&\geq -\beta \frac{1}{Z_\tau^Q}T \kappa \Vert \delta \Vert_\infty +\beta \frac{1}{Z_\tau^Q}\Big(-\kappa + e^{-T \Vert\delta \Vert _{\infty}}\E_P[ f( Z_{T}^{Q})|\mathcal{F}_{\tau}]\Big)\\& -\frac{1}{Z_\tau^Q}\Big(\frac{1}{\gamma}\E_P[f(Z_T^Q)|\mathcal{F}_{\tau}]+\frac{1}{\gamma}\E_P[f^*(\gamma \alpha \int_{0}^{T}|U_s|ds+\gamma \bar{\alpha}|\bar{U}_T| )|\Fc_\tau]\Big).
\end{split}
\end{equation}
By  choosing $ \tau=0$ and taking the expectation under $Q$, we  obtain:
\begin{equation}
\begin{split}\Gamma(Q)\geq &-\beta T \kappa \Vert \delta \Vert_\infty  +\beta [-\kappa + e^{-T \Vert\delta \Vert _{\infty}}\E_P[f(Z_{T}^{Q})]]-\\& \Big(\frac{1}{\gamma}\E_P[f(Z_T^Q)]+\frac{1}{\gamma}\E_P[f^*(\gamma \alpha \int_{0}^{T}|U_s|ds+\gamma \bar{\alpha}|\bar{U}_T| )]\Big)
\\&=-\beta \kappa (T \Vert \delta \Vert_\infty+1)+d(Q|P)(\beta e^{-T \Vert\delta \Vert _{\infty}}-\frac{1}{\gamma})
\\&-\frac{1}{\gamma}\E_P\Big[f^*(\gamma \alpha \int_{0}^{T}|U_s|ds+\gamma \bar{\alpha}|\bar{U}_T|)\Big].
\end{split}
\end{equation}
By choosing $\gamma $ large enough, there exists $\eta>0$  such that $ \beta e^{-T \Vert\delta \Vert _{\infty}}-\frac{1}{\gamma} \geq \eta.$
We set $$K:= \frac{1}{\eta} \max (1,\beta \kappa (T \Vert \delta \Vert_\infty+1)+\frac{1}{\gamma}\E_P[f^*(\gamma \alpha \int_{0}^{T}|U_s|ds+\gamma \bar{\alpha}|\bar{U}_T|)].$$
\\Under the assumptions \textbf{(A1)-(A2)}, $K$ is finite and so the proof of the proposition is achieved.
\ep
 \end{proof}
The following lemma is useful to show the existence of $Q^*$ which realizes the infimum of $Q\mapsto\Gamma(Q)$
\begin{Lemma}\label{lemma2}
For all $ \gamma >0$ and all $A \in \Fc_T$ we have :
\begin{equation}
\E_Q\Big[|\Uc^\delta_{0,T}|\textbf{1}_A\Big] \leq \frac{1}{\gamma}(d(Q|P)+\kappa)+\frac{1}{\gamma}\E_P\Big[f^*(\gamma \alpha \int_{0}^{T}|U_s|ds+\gamma \bar{\alpha}|\bar{U}_T|)\textbf{1}_A\Big].
\end{equation}
\end{Lemma}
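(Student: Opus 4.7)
The plan is to reduce the claim to a single application of the Young-type inequality \reff{convex inequality2} after bounding the discounted cost by an undiscounted one and changing measure via Bayes' formula.

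First I would drop the discount factor: since $0\le S^{\delta}\le 1$ under assumption \textbf{(A1)}, the triangle inequality gives
\[ |\Uc^{\delta}_{0,T}| \;\le\; R \;:=\; \alpha\int_{0}^{T}|U_s|\,ds+\bar{\alpha}|\bar{U}_T|. \]
Then a standard change of measure yields $\E_Q[|\Uc^{\delta}_{0,T}|\mathbf{1}_A]\le \E_P[Z_T^Q R\,\mathbf{1}_A]$, which isolates the product $Z_T^Q R$ that the conjugate inequality was designed to handle.

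Next I would apply \reff{convex inequality2} pointwise to the product $Z_T^Q R$ with parameter $\gamma>0$, obtaining
\[ Z_T^Q R \;\le\; \tfrac{1}{\gamma}\bigl[f^{\ast}(\gamma R)+f(Z_T^Q)\bigr]. \]
Multiplying by $\mathbf{1}_A$ and taking $\E_P[\,\cdot\,]$ splits the right-hand side into the desired $f^{\ast}$-term on $A$ and a residual $\frac{1}{\gamma}\E_P[f(Z_T^Q)\mathbf{1}_A]$ that still needs to be controlled by $d(Q\vert P)+\kappa$.

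The last step is where hypothesis \textbf{(H.2)} comes in: since $f+\kappa\ge 0$, the random variable $(f(Z_T^Q)+\kappa)\mathbf{1}_A$ is nonnegative and bounded above by $f(Z_T^Q)+\kappa$, so
\[ \E_P\bigl[f(Z_T^Q)\mathbf{1}_A\bigr] \;\le\; \E_P\bigl[(f(Z_T^Q)+\kappa)\mathbf{1}_A\bigr] \;\le\; \E_P\bigl[f(Z_T^Q)\bigr]+\kappa \;=\; d(Q\vert P)+\kappa. \]
Combining the three estimates gives the lemma. There is no real obstacle here; the only mildly delicate point is remembering to absorb $\kappa$ by the nonnegativity trick rather than trying to bound $|f(Z_T^Q)|$ directly, which would produce an extra $\kappa$ factor and spoil the clean form of the inequality.
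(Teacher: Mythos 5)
Your proof is correct and follows essentially the same route as the paper's: bound $|\Uc^\delta_{0,T}|$ by $R=\alpha\int_0^T|U_s|\,ds+\bar\alpha|\bar U_T|$ using $0\le S^\delta\le 1$, apply the conjugate inequality \reff{convex inequality2} to $Z_T^Q R$, and use \textbf{(H.2)} to drop the indicator from the nonnegative term $f(Z_T^Q)+\kappa$ before taking $\E_P$. The only cosmetic difference is that the paper performs the indicator-dropping pointwise before integrating, whereas you do it at the level of expectations; the content is identical.
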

 \begin{proof}
From the definition of $\Uc^\delta_{0,T}$ and using inequality (\ref{convex inequality1}), we have
\begin{equation*}
\begin{split}
Z_T^Q|\Uc^\delta_{0,T}|\textbf{1}_A &\leq Z_T^Q \Big(\alpha \int_{0}^{T}|U_s|ds+\gamma \bar{\alpha}|\bar{U}|\Big)\textbf{1}_A
\\& \leq \frac{1}{\gamma}\Big(f(Z_T^Q)+f^*(\gamma \alpha \int_{0}^{T}|U_s|ds+\gamma \bar{\alpha}|\bar{U}|)\Big)\textbf{1}_A.
\end{split}
\end{equation*}
Using assumption \textbf{(H2)}, we obtain
\begin{equation*}
\begin{split}
Z_T^Q|\Uc^\delta_{0,T}|\textbf{1}_A & \leq \frac{1}{\gamma}\Big(f(Z_T^Q)+\kappa+f^*(\gamma \alpha \int_{0}^{T}|U_s|ds+\gamma \bar{\alpha}|\bar{U}|)\Big)\textbf{1}_A
\\& = \frac{1}{\gamma}[f(Z_T^Q)+\kappa]+\frac{1}{\gamma}[f^*(\gamma \alpha \int_{0}^{T}|U_s|ds+\gamma \bar{\alpha}|\bar{U}|)]\textbf{1}_A.
\end{split}
\end{equation*}
The result follows by taking the expectation under $P$.\\
\ep
\end{proof}
The next result is related to the existence of unique probability measure solution of optimization problem (\ref{min1})
\begin{Theorem}\label{exisun} Under assumptions \textbf{(A1)-(A2)} and \textbf{(H3)}, there is  a unique $Q^*\in \Qc_f$ which minimizes
$Q \mapsto \Gamma(Q)$ over all $Q \in \Qc_f$ .
\end{Theorem}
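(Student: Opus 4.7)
The strategy is a standard weak compactness argument in $L^1(P)$ combined with lower semicontinuity of $\Gamma$, followed by strict convexity for uniqueness.

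Let $\gamma^* := \inf_{Q \in \Qc_f} \Gamma(Q)$, which is finite by Proposition~\ref{cont1}. Pick a minimizing sequence $(Q^n) \subset \Qc_f$ with $\Gamma(Q^n) \downarrow \gamma^*$. The same proposition gives
$$\sup_n \E_P\!\left[f(Z_T^{Q^n})\right] = \sup_n d(Q^n|P) \leq K\bigl(1+\sup_n \Gamma(Q^n)\bigr) < \infty.$$
Assumption \textbf{(H.3)} provides superlinear growth of $f$, so the de la Vall\'ee--Poussin criterion and the Dunford--Pettis theorem give a subsequence (still denoted $(Q^n)$) such that $Z_T^{Q^n} \to Z^*$ weakly in $L^1(P)$, with $Z^* \geq 0$ and $\E_P[Z^*]=1$. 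Setting $dQ^*/dP := Z^*$ defines the candidate $Q^* \ll P$.

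To show $Q^* \in \Qc_f$ and $\Gamma(Q^*) = \gamma^*$, I would apply Mazur's lemma to obtain forward convex combinations $\tilde Z^n = \sum_{k \geq n} \lambda_k^n Z_T^{Q^k}$ converging to $Z^*$ strongly in $L^1(P)$, and $P$-a.s.\ along a further subsequence; the corresponding mixtures $\tilde Q^n$ still lie in $\Qc_f$ and satisfy $\Gamma(\tilde Q^n) \leq \sum_k \lambda_k^n \Gamma(Q^k) \to \gamma^*$ by convexity of $\Gamma$ (inherited from convexity of $f$). I would then take the liminf in $n$ in two pieces. For the penalty term, $\tilde Z_s^n = \E_P[\tilde Z^n \mid \Fc_s] \to Z_s^*$ $P$-a.s.\ along a subsequence, and Fatou's lemma applied to $f + \kappa \geq 0$ (from \textbf{(H.2)}) combined with Tonelli--Fubini in $s$ yields lower semicontinuity of $Q \mapsto \E_Q[\Rc^\delta_{0,T}(Q)]$. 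For the utility term $\E_P[\tilde Z^n\, \Uc^\delta_{0,T}]$, I would truncate $\Uc^\delta_{0,T}$ at level $M$: the bounded part passes to the limit by weak $L^1$-convergence, and the tail on $A_M = \{|\Uc^\delta_{0,T}| > M\}$ is controlled via Lemma~\ref{lemma2}, first choosing $\gamma$ large enough to absorb the $(d(\tilde Q^n|P)+\kappa)/\gamma$ term — which is uniform in $n$ since $d(\cdot|P)$ is convex, so the a priori bound on $d(Q^n|P)$ survives the Mazur combination — and then $M$ large so that $\E_P[f^*(\gamma R)\mathbf{1}_{A_M}]/\gamma$ is small, using $f^*(\gamma R) \in L^1(P)$ from \textbf{(A2)}.

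Uniqueness follows from strict convexity of $\Gamma$: the map $Z_T^Q \mapsto \E_P[\Uc^\delta_{0,T}\, Z_T^Q]$ is linear, while $\beta\, \E_P[S_T^\delta f(Z_T^Q)]$ is strictly convex because $f$ is strictly convex and $S_T^\delta \geq e^{-T\Vert\delta\Vert_\infty} > 0$ by \textbf{(A1)}; hence two minimizers in $\Qc_f$ must have identical terminal densities $P$-a.s. The principal technical obstacle I anticipate is the handling of the utility term: one must carefully order the choices of $\gamma$ and $M$ in Lemma~\ref{lemma2} to obtain uniform integrability of $\tilde Z^n \,\Uc^\delta_{0,T}$, and check that the uniform bound on $d(\tilde Q^n|P)$ (needed for the $\gamma$-step) is indeed preserved under the Mazur convex combinations.
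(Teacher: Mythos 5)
Your proposal is correct and follows essentially the same route as the paper: a minimizing sequence, forward convex combinations converging a.s.\ (the paper invokes Koml\'os' theorem where you use Dunford--Pettis plus Mazur, an interchangeable device here), the uniform bound on $d(\cdot|P)$ from Proposition~\ref{cont1} combined with de la Vall\'ee--Poussin, Fatou's lemma on the penalty term via \textbf{(H.2)}, truncation of $\Uc^\delta_{0,T}$ with the tail controlled uniformly in $n$ by Lemma~\ref{lemma2} (sending the truncation level and then $\gamma$ to infinity, exactly as in the paper), and strict convexity of $\Gamma$ for uniqueness.
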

\begin{proof}
 \begin{enumerate}
 \item $Q \mapsto \Gamma(Q)$ is strictly convex; hence $Q^*$ must be unique if it exists.
\item Let $(Q^n)_{n\in \N}$ be a minimizing sequence in $\Qc_f$ i.e.
$$\searrow \lim\limits_{n \rightarrow +\infty}\Gamma(Q^n)=\inf\limits_{Q\in\Qc_f}\Gamma(Q)>-\infty, $$
and we denote by  $Z^n = Z^{Q^n}$ the corresponding density processes.
\\ Since each $Z_T^n\geq 0$, it follows from Koml$\acute{o}$s' theorem that there exists a sequence $(\bar{Z}_T^n )_{n\in \N}$ with $\bar{Z}_T^n \in  conv(Z^n_T ,Z^{n+1}_T , ...)$ for each $n\in\N$ and such that  $(\bar{Z}^n_T )$ converges $P$-a.s. to some random variable $\bar{Z}^{\infty}_T$ which is nonnegative but may take the value $+\infty.$
Because  $\Qc_f$ is convex, each $\bar{Z}_T^n$ is again associated with some $\bar{Q}^n \in \Qc_f.$
  We claim that this also holds for $\bar{Z}^{\infty}_T$ , i.e., that $d\bar{Q}^{\infty} :=\bar{Z}^{\infty}_TdP$ defines a probability measure $\bar{Q}^{\infty}\in \Qc_f.$ To see this, note first that we have
\begin{equation}\label{ineq1}
\Gamma(\bar{Q}^n) \leq \sup\limits_{m\geq n}\Gamma(Q^m)= \Gamma(Q^n)\leq \Gamma(Q^1),
\end{equation}
  because $Q \mapsto \Gamma(Q)$ is convex and $n \mapsto \Gamma(Q^n)$ is decreasing. Hence Proposition $\ref{cont1}$ yields
\begin{equation}\label{ineq2}
\begin{split}
\sup\limits_{n\in \N}\E_P [f (\bar{Z}^n) ] &= \sup\limits_{n\in \N}d(\bar{Q}^n|P) \leq K(1 + \sup\limits_{n\in \N}\Gamma(\bar{Q}^n))\\& \leq K(1 + \sup\limits_{n\in \N}\Gamma({Q}^n))\leq K(1 +\Gamma({Q}^1))<+\infty.
\end{split}
\end{equation}
From assumption \textbf{(H3)} and using de la Vall\'ee-Poussin's  criterion, we obtain the $P$-uniformly integrability of $(\bar{Z}^n_T )_{n\in\N}$   and therefore $(\bar{Z}^n_T )_{n\in\N}$ converges in $L^1(P).$ This implies that $\E_P[\bar{Z}^{\infty}_T ]=\lim\limits_{n\rightarrow +\infty}E_P[\bar{Z}^n_T]=1$ and so $\bar{Q}^{\infty}$ is  a probability measure which is absolute continuous with respect to  $P$ on $\Fc_T$ . Because $f$ is bounded from below by $\kappa$, Fatou's lemma and inequality(\ref{ineq2}) yield
\begin{equation}\label{c3}
 d(\bar{Q}^\infty|P) = \E_P[f(\bar{Z}^\infty_T)] \leq \liminf\limits_{n\rightarrow +\infty} \E_P[f(\bar{Z}^n_T)] < +\infty.
\end{equation}
Finally, we also have $\bar{Q}^\infty = P $ on $\Fc_0.$ In fact, $(\bar{Z}^n_T)$ converges  to $\bar{Z}^\infty_T$ strongly in $L^1(P)$, hence also weakly in $L^1(P)$ and so we have for every  $A \in \Fc_0:$
$$\bar{Q}^\infty[A] = \E_P[\bar{Z}^\infty_T \textbf{1}_A] =\lim\limits_{n \rightarrow +\infty} \E_P [Z^n_T \textbf{1}_A] =\lim\limits_{n\rightarrow +\infty}\bar{Q}^n[A]=P[A].$$
The last equality holds since $\bar{Q}^n(A)=P(A)$ for all $n\in \mathbb{N}$ and $A \in \Fc_0$. This shows that $\bar{Q}^{\infty}\in\Qc_f $.
\item We now want to show that $Q^* := \bar{Q}^\infty$ attains the infimum of
$Q \mapsto\Gamma(Q)$ on $\Qc_f.$
\\Let $\bar{Z}^\infty$ be the density process of  $\bar{Q}^\infty$ with respect to $P.$
Because we know that $(\bar{Z}^n_T)$ converges to $\bar{Z}^\infty$  in  $L^1(P)$, Doob's maximal inequality
$$P[\sup\limits_{0\leq t \leq T} \mid \bar{Z}^\infty_t-\bar{Z}^n_t \mid\geq \epsilon]\leq \frac{1}{\epsilon} \E_P[\mid \bar{Z}^\infty_T-\bar{Z}^n_T \mid]$$
implies that  $(\sup\limits_{0\leq t \leq T} \mid \bar{Z}^\infty_t-\bar{Z}^n_t \mid)_{n\in\N}$ converges to $0$ in $P$-probability.
\\By passing to a subsequence that we still denote by $(\bar {Z}^n)_{n\in\N}$, we may thus assume that  the sequence $(\bar {Z}^n)$ converges to $\bar{Z}^\infty$ uniformly in $t$ with $P$-probability 1. This implies that the sequence
$ (Z^n_T c( .,\bar{Q}^n) )$  converges  to  $ \bar{Z}_T^\infty c(.,\bar{Q}^\infty) P$-a.s.
and in more detail with $$\bar{Y}_1^n:=\bar {Z}^n_T \Uc_{0,T}^\delta , \bar{Y}_2^n:=\beta(\dint_0^T\delta_sS_s^\delta f(\bar{Z}_s^n)ds+S_T^\delta f(\bar{Z}_T^n))=\beta \Rc_{0,T}^\delta(\bar{Q}^n)$$
for $n\in\N \cup \{+\infty\}$ that
$$\lim\limits_{ n\rightarrow +\infty} \bar{Y}^n_i =  \bar{Y}^\infty_i P-a.s. \;\;\textrm{for}\;\; i = 1,2.$$
Since $\bar{Y}^n_2$ is bounded from below, uniformly in $n$ and $\omega$, Fatou's lemma yields
\begin{equation}\label{ineq4}
\E_P[\bar{Y}^\infty_2]\leq \liminf\limits_{n\rightarrow \infty}\E_P[\bar{Y}^n_2].
\end{equation}
We prove below that we have
\begin{equation}\label{ineq5}
\E_P[\bar{Y}^\infty_1]\leq \liminf\limits_{n\rightarrow \infty}\E_P[\bar{Y}^n_1].
\end{equation}
Plugging (\ref{ineq4}) and (\ref{ineq5}) into (\ref{ineq1}), we obtain
$$\Gamma(\bar{Q}^\infty) =\E_P [ \bar{Y}^\infty_1+\bar{Y}^\infty_2 ] \leq \liminf\limits_{n\rightarrow\infty}\Gamma(\bar{Q}^n)
\leq \liminf\limits_{n\rightarrow\infty}\Gamma(Q^n)\leq \inf\limits_{Q\in \Qc_f}\Gamma(Q)$$
which proves that $\bar{Q}^\infty $ is indeed optimal.
\\It now remains to show that $\E_P[\bar{Y}^\infty_1]\leq \liminf\limits_{n\rightarrow \infty}\E_P[\bar{Y}^n_1].$
\\We set for $m\in \N;$ $\tilde{R}_m:=\Uc_{0,T}^\delta \textbf{1}_{\{\Uc_{0,T}^\delta\geq -m\}}.$
Thus for all $n\in \N\cup \{+\infty\}$; $$\bar{Y}^n_1=\bar{Z}_T^n\Uc_{0,T}^\delta=\bar{Z}_T^n\tilde{R}_m+\bar{Z}_T^n\Uc_{0,T}^\delta \textbf{1}_{\{\Uc_{0,T}^\delta< -m\}}.$$
Since $\tilde{R}_m \geq -m$ and $\E_P[\bar{Z}_T^n]=1$,  Fatou's lemma yields :
$$\E_P[\bar{Z}_T^\infty\Uc_{0,T}^\delta]\leq \liminf\limits_{n \rightarrow \infty} \E_P[\bar{Z}_T^n\Uc_{0,T}^\delta].$$
Hence
\begin{equation*}
\begin{split}
\E_P[\bar{Y}^\infty_1]&=\E_P[\bar{Y}^\infty_1 \textbf{1}_{\{\Uc_{0,T}^\delta\geq -m\}}]+\E_P[\bar{Y}^\infty_1 \textbf{1}_{\{\Uc_{0,T}^\delta <-m\}}]
\\&\leq \liminf\limits_{n \rightarrow \infty} \E_P[\bar{Z}_T^n\tilde{R}_m]+\E_P[\bar{Z}_T^\infty\Uc_{0,T}^\delta \textbf{1}_{\{\Uc_{0,T}^\delta <-m\}}]
\\&\leq \liminf\limits_{n \rightarrow \infty} \E_P[\bar{Y}_1^n]+2 \sup\limits_{n\in \N\cup \{\infty\}}\E_P[\bar{Z}_T^n|\Uc_{0,T}^\delta | \textbf{1}_{\{\Uc_{0,T}^\delta <-m\}}].
\end{split}
\end{equation*}
It remains to show that $$\lim\limits_{m\rightarrow +\infty}\sup\limits_{n\in \N\cup \{\infty\}}\E_P[\bar{Z}_T^n|\Uc_{0,T}^\delta| \textbf{1}_{\{\Uc_{0,T}^\delta <-m\}}]=0.$$
However, Lemma \ref{lemma2} and Proposition \ref{cont1} give for any $n\in \N\cup \{\infty\}$:
\begin{equation*}
\begin{split}
&\E_P\Big[\bar{Z}_T^n|\Uc_{0,T}^\delta| \textbf{1}_{\{\Uc_{0,T}^\delta <-m\}}\Big]=\E_{\bar{Q}^n}\Big[|\Uc_{0,T}^\delta| \textbf{1}_{\{\Uc_{0,T}^\delta <-m\}}\Big]
\\&\leq \frac{1}{\gamma}(d(\bar{Q}^n|P)+\kappa)+\frac{1}{\gamma}\E_P\Big[\textbf{1}_{\{\Uc_{0,T}^\delta <-m\}}f^*(\gamma \alpha \int_{0}^{T}|U_s|ds+\gamma \bar{\alpha}|\bar{U}_T| )\Big]
\\& \leq \frac{1}{\gamma}\Big(K\big(1+\Gamma(\bar{Q}^n)\big)+\kappa\Big)+\E_P\Big[\textbf{1}_{\{\Uc_{0,T}^\delta <-m\}}f^*(\gamma \alpha \int_{0}^{T}|U_s|ds+\gamma \bar{\alpha}|\bar{U}_T| )\Big].
\end{split}
\end{equation*}
Using inequality (\ref{ineq1}), we obtain for all $\gamma >0$
\begin{equation*}
\begin{split}
\sup\limits_{n\in \N\cup \{\infty\}}\E_P[\bar{Z}_T^n|\Uc_{0,T}^\delta| \textbf{1}_{\{\Uc_{0,T}^\delta <-m\}}]& \leq \frac{1}{\gamma}\Big(K\big(1+\Gamma(Q^1)\big)+\kappa\Big)
\\&+\frac{1}{\gamma}\E_P[\textbf{1}_{\{\Uc_{0,T}^\delta <-m\}}f^*(\gamma \alpha \int_{0}^{T}|U_s|ds+\gamma \alpha'|U_T^{\prime }|)]).
\end{split}
\end{equation*}
\\Thanks to the dominated convergence theorem, the integrability assumption (\textbf{A2}) and since $f^*$ is nonnegative function, we have $$\lim\limits_{m\rightarrow \infty}\E_P[\textbf{1}_{\{\Uc_{0,T}^\delta <-m\}}f^*(\gamma \alpha \int_{0}^{T}|U_s|ds+\gamma \bar{\alpha}|\bar{U}_T|)]=0.$$
Therefore, for all $\gamma >0$
$$\lim\limits_{m\rightarrow +\infty}\sup\limits_{n\in \N\cup \{\infty\}}\E_P[\bar{Z}_T^n|\Uc_{0,T}^\delta| \textbf{1}_{\{\Uc_{0,T}^\delta <-m\}}] \leq \frac{1}{\gamma}(K(1+\Gamma(Q^1))+\k).$$
\\Finally by sending $\gamma$ to $+\infty$, the desired result is obtained.\ep
\end{enumerate}
\end{proof}
Our next aim  is to prove that the minimal measure $Q^*$ is equivalent to $P.$ We use  an adaptation of an argument given by Frittelli \cite{Fri00}, which is the object of the following lemma.
\begin{Lemma}\label{lemme}
We assume \textbf{(H2)}.
Let $Q^0$ and $Q^1$ two elements in $\Qc_f$ with respective densities $Z^0$ and $Z^1.$ Then
$$ \sup\limits_{0\leq t\leq T}\E_P\Big[\Big(f'(Z_t^0)(Z_t^1-Z_t^0)\Big)^+\Big]\leq d(Q^1|P)+\kappa.$$
\end{Lemma}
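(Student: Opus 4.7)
The plan is to exploit the fact that $f$ is convex and, under assumption \textbf{(A3)}, differentiable on $(0,+\infty)$, so that it admits the tangent (subgradient) inequality
\begin{equation*}
f(b)\,\geq\, f(a)+f'(a)(b-a), \qquad a>0,\ b\geq 0,
\end{equation*}
equivalently $f'(a)(b-a)\leq f(b)-f(a)$. Applied pointwise with $a=Z_t^0(\omega)$ and $b=Z_t^1(\omega)$ on the set $\{Z_t^0>0\}$, this yields $f'(Z_t^0)(Z_t^1-Z_t^0)\leq f(Z_t^1)-f(Z_t^0)$. On the (possibly non-empty) set $\{Z_t^0=0\}$, the condition $f'(0)=-\infty$ from \textbf{(A3)}, together with $Z_t^1-Z_t^0=Z_t^1\geq 0$, forces $f'(Z_t^0)(Z_t^1-Z_t^0)\leq 0$ (under the convention $-\infty\cdot 0=0$), so the positive part vanishes there. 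Hence in all cases
\begin{equation*}
\bigl(f'(Z_t^0)(Z_t^1-Z_t^0)\bigr)^+\,\leq\,\bigl(f(Z_t^1)-f(Z_t^0)\bigr)^+.
\end{equation*}

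Next I would use assumption \textbf{(H.2)}, which gives $f\geq -\kappa$: this implies simultaneously $-f(Z_t^0)\leq \kappa$ and $f(Z_t^1)+\kappa\geq 0$, whence
\begin{equation*}
\bigl(f(Z_t^1)-f(Z_t^0)\bigr)^+\,\leq\, f(Z_t^1)+\kappa.
\end{equation*}

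Finally I would take $P$-expectation and apply Jensen's inequality to the $P$-martingale $Z^1$, exactly as in the estimate \reff{c1} from the proof of Proposition \ref{prop1}, to obtain $\E_P[f(Z_t^1)]\leq \E_P[f(Z_T^1)]=d(Q^1|P)$, uniformly in $t\in[0,T]$. Combining the two bounds and then taking the supremum over $t\in[0,T]$ delivers the required estimate. The only delicate point is the boundary behaviour at $\{Z_t^0=0\}$, which is cleanly absorbed by the condition $f'(0)=-\infty$ in \textbf{(A3)}; the remainder is a routine combination of the tangent inequality, the lower bound in \textbf{(H.2)}, and Jensen's inequality, so I do not expect any genuine obstacle.
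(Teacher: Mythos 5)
Your proof is correct and follows essentially the same route as the paper: the paper derives the tangent inequality $f'(Z_t^0)(Z_t^1-Z_t^0)\leq f(Z_t^1)-f(Z_t^0)$ via the monotone difference quotient $H(x,t)=\frac{1}{x}(f(Z_t^x)-f(Z_t^0))$, then applies \textbf{(H.2)} and Jensen's inequality exactly as you do. Your explicit treatment of the boundary set $\{Z_t^0=0\}$ is a welcome extra care that the paper glosses over, but it does not change the nature of the argument.
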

\begin{proof}
Set $Z^x=xZ^1+(1-x)Z^0$ and for $x\in(0,1]$ and fixed  $t\in \R,$
\begin{equation}\label{star}H(x,t):=\frac{1}{x}(f(Z_t^x)-f(Z_t^0)).\end{equation}
Since $f$ is strictly convex, the function $x\mapsto H(x,t)$ is nondecreasing  and consequently
\begin{equation*}
\begin{split}
H(1,t)&\geq\lim\limits_{x\searrow 0}\frac{1}{x}(f(Z_t^x)-f(Z_t^0))=\frac{d}{dx}f(Z_t^x)\mid_{x=0}\\&=f'(Z_t^0)(Z_t^1-Z_t^0).
\end{split}
\end{equation*}
From assumption \textbf{(H2)}, we have:
\begin{equation}\label{equa1}
\begin{split}
f'(Z_t^0)(Z_t^1-Z_t^0)\leq H(1,t)&=f(Z_t^1)-f(Z_t^0)\\&\leq f(Z_t^1)+\kappa.
\end{split}
\end{equation}
Since $f(Z_t^1)+\k\geq 0,$ then $(f'(Z_t^0)(Z_t^1-Z_t^0))^+ \leq f(Z_t^1)+\kappa.$
Replacing in the inequality (\ref{c1}) $Z^Q$ by $Z^1$, we obtain:
$$\E_P[f(Z_t^1)]\leq \E_P[f(Z_T^1)]=d(Q^1|P).$$
Taking the expectation under $P$ in equation (\ref{equa1}) the desired result is obtained.
\ep
\end{proof}
\begin{Theorem}  Under the assumptions \textbf{\textrm{(H2),(H4)}} and  \textbf{\textrm{(A1)-(A2)}}, the optimal probability measure $Q^*$ is equivalent to $P.$
\end{Theorem}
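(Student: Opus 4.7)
The plan is to argue by contradiction, using a Frittelli-style perturbation along the line segment between $Q^{*}$ and an equivalent reference measure. Suppose $Q^{*}$ is not equivalent to $P$, so there exists $A\in\Fc_T$ with $P(A)>0$ and $Z^{*}_T=0$ on $A$. Note that $P$ itself lies in $\Qc_f^e$ (since $Z^P\equiv 1$ and $f(1)=0$ by \textbf{(H1)}), so we may pick any $Q^1\in\Qc_f^e$, e.g.\ $Q^1=P$, with density process $Z^1$ strictly positive and $d(Q^1|P)<+\infty$.

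For $x\in(0,1]$ define the convex combination $Z^x_t:=xZ^1_t+(1-x)Z^{*}_t$, which defines a probability measure $Q^x\in\Qc_f^e$: strict positivity of $Z^x$ follows from $Z^1>0$, and convexity of $f$ together with \textbf{(H2)} yields $d(Q^x|P)<+\infty$. I then study
$$
\Delta(x):=\frac{\Gamma(Q^x)-\Gamma(Q^{*})}{x}.
$$
Because $\Gamma$ is convex (and strictly so, as noted in the proof of Theorem \ref{exisun}), the function $x\mapsto\Delta(x)$ is non-decreasing, so $\lim_{x\searrow 0}\Delta(x)=\inf_{x>0}\Delta(x)$ exists in $[-\infty,+\infty)$. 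Moreover, using $\Gamma(Q)=\E_P[Z_T^Q\Uc^\delta_{0,T}]+\beta\,\E_P\bigl[\int_0^T\delta_sS_s^\delta f(Z_s^Q)\,ds+S_T^\delta f(Z_T^Q)\bigr]$ and the notation $H(x,t)=\tfrac{1}{x}(f(Z^x_t)-f(Z^{*}_t))$ from \reff{star},
$$
\Delta(x)=\E_P\bigl[(Z^1_T-Z^{*}_T)\Uc^\delta_{0,T}\bigr]+\beta\,\E_P\Bigl[\int_0^T\delta_s S_s^\delta H(x,s)\,ds+S_T^\delta H(x,T)\Bigr].
$$

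The key step is passing to the limit $x\searrow 0$. By convexity of $f$ and \textbf{(A3)}, $H(x,t)\searrow f'(Z^{*}_t)(Z^1_t-Z^{*}_t)$ pointwise as $x\searrow 0$, and Lemma \ref{lemme} together with \reff{equa1} supplies the uniform upper bound $H(x,t)\le H(1,t)\le f(Z^1_t)+\kappa$, which is integrable since $Q^1\in\Qc_f$. Monotone (downward) convergence, applied to $H(1,t)-H(x,t)\ge 0$ and combined with optimality $\Delta(x)\ge 0$, gives
$$
0\le\lim_{x\searrow 0}\Delta(x)=\E_P\bigl[(Z^1_T-Z^{*}_T)\Uc^\delta_{0,T}\bigr]+\beta\,\E_P\Bigl[\int_0^T\delta_s S_s^\delta f'(Z^{*}_s)(Z^1_s-Z^{*}_s)\,ds+S_T^\delta f'(Z^{*}_T)(Z^1_T-Z^{*}_T)\Bigr].
$$
On $A$, we have $Z^{*}_T=0$ and $Z^1_T>0$ (since $Q^1\sim P$), and \textbf{(A3)} gives $f'(Z^{*}_T)=-\infty$ there, so $f'(Z^{*}_T)(Z^1_T-Z^{*}_T)=-\infty$ on $A$. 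Since $P(A)>0$, the terminal contribution to the right-hand side is $-\infty$, while all other terms are bounded above by integrable quantities (the $\Uc^\delta_{0,T}$ term via \textbf{(A2)} and Lemma \ref{lemma2}, and the $ds$-integral via the same Lemma \ref{lemme} bound). This forces the right-hand side to equal $-\infty$, contradicting $\lim_{x\searrow 0}\Delta(x)\ge 0$. Hence $P(Z^{*}_T=0)=0$, i.e.\ $Q^{*}\sim P$.

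The main obstacle is the rigorous justification of the passage to the limit together with the appearance of $-\infty$: one must check that the positive part of $H(x,t)$ stays dominated uniformly in $x$ (which is exactly the content of Lemma \ref{lemme}, so that invoking it is the crux), while allowing the negative part to blow up only on $A$. Everything else is bookkeeping around convexity of $\Gamma$ and the already-established integrability estimates from Propositions \ref{prop1} and \ref{cont1}.
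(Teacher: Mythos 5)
Your proposal is correct and follows essentially the same route as the paper: a Frittelli-type perturbation $Q^x=xQ^1+(1-x)Q^*$ with $Q^1\approx P$, monotone convergence via the bound $H(x,t)\le f(Z^1_t)+\kappa$ from Lemma \ref{lemme}, optimality forcing the one-sided derivative to be nonnegative, and the contradiction on $A=\{Z^*_T=0\}$ coming from $f'(0^+)=-\infty$ in \textbf{(A3)} together with integrability of the positive part. The only cosmetic difference is that you frame the whole argument as a contradiction from the outset, whereas the paper first establishes the derivative formula and the integrability of $Y_2^+$ in general before specializing to $Q^0=Q^*$.
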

\begin{proof}
\\ 1) As in the proof of Lemma \ref{lemme}, we take $Q^0,Q^1 \in \Qc_f$ , we set $Q^x := xQ^1 +(1-x)Q^0$ for
$x\in (0; 1]$ and we denote by $Z^x$ the density process of $Q^x$ with respect to $P$. Then, get
\begin{equation*}
\begin{split}
\frac{1}{x}(\Gamma(Q^x)-\Gamma(Q^0))&= \E_P [(Z^1_T-Z^0_T) \Uc^\delta_{0,T} ]
\\&+\frac{1}{x}\beta\E_P[\dint_0^T \delta_sS_s^\delta (f(Z^x_s )-f(Z^0_s ))ds + S_T^\delta (f(Z^x_T )-f(Z^0_T))]
\\&= \E_P[(Z^1_T-Z^0_T) \Uc^\delta_{0,T} ]
\\&+\beta\E_P[\dint_0^T \delta_sS_s^\delta H(x,s)ds + S_T^\delta H(x,T)].
\end{split}
\end{equation*}
Since  $x\mapsto H(x; s)$ is nondecreasing and using assumption \textbf{(H2)}, we have
$$H(x,s) \leq  H(1,s) = f(Z^1_s )-f(Z^0_s ) \leq f(Z^1_s )+\kappa,$$
where the right hand of the last inequality is integrable. Hence  monotone convergence Theorem  can be used to deduce that
\begin{equation}\label{derivegamma}
\begin{split}
\frac{d}{dx} \Gamma(Q^x)\mid_{x=0}&= \E_P[(Z^1_T-Z^0_T) \Uc^\delta_{0,T}]+\beta \E_P[\dint_0^T \delta_sS_s^\delta f'(Z_s^0)(Z_s^1-Z_s^0)ds \\&+ S_T^\delta f'(Z_T^0)(Z_T^1-Z_T^0)]
\\&:= \E_P[Y_1] + \E_P[Y_2].
\end{split}
\end{equation}
Under assumptions (\textbf{A1})-(\textbf{A2}) and from inequality (\ref{convex inequality1}), we have $Y_1\in L^1(P)$. As in the proof of Lemma \ref{lemme}, using the nondecreasing property of the function  $x \mapsto H(x,s)$ and assumption (\textbf{H2}), we obtain
$$Y_2\leq\int_0^T\delta_sS^\delta_sH(1,s) ds + S^\delta_T H(1,T)\leq \int_0^T\delta_sS^\delta_s(f(Z_s^1)+\k)ds + S^\delta_T (f(Z_T^1)+\k)$$
which is $P$-integrable because $Q^1\in \Qc_f$ . From Lemma \ref{lemme} we deduce that $Y_2^+ \in L^1(P)$ and so the right-hand side of (\ref{derivegamma}) is well-defined in $[-\infty,+\infty)$.
\\2) Now take $Q^0 = Q^*$ and any $Q^1\in \Qc_f$ which is equivalent to $P$ this is possible since $\Qc_f$ contains $P$. The optimality of $Q^*$ yields $\Gamma(Q^x)-\Gamma(Q^*)\geq0$ for all $x \in (0; 1]$, hence also
\begin{equation}\label{d1}
\frac{d}{dx} \Gamma(Q^x)\mid_{x=0}\geq 0.
\end{equation}
Therefore the right-hand side of (\ref{derivegamma}) is nonnegative.
Which implies that $E_P[Y_2^-]\leq E_P[Y_1]+E_P[Y_2^+].$ The right hand side of the last inequality is finite since $Y_1\in L^1(P)$ and $Y_2^+\in L^1(P).$ This shows that $Y_2$ must be in $L^1(P)$.
This makes it possible to rearrange terms and rewrite (\ref{d1}) by using (\ref{derivegamma}) as
\begin{equation}\label{ineq6}
\beta \E_P[\dint_0^T \delta_sS_s^\delta f'(Z_s^*)(Z_s^1-Z_s^*)ds+ S_T^\delta f'(Z_T^*)(Z_s^1-Z_T^*)]\geq -\E_P[(Z^1_T-Z^*_T) \Uc^\delta_{0,T}].
\end{equation}
But the right-hand side of (\ref{ineq6}) is strictly greater than $ -\infty.$ So, if the probability measure $Q^*$ is not equivalent to $P$, then the set $A :=\{Z_T^*=0\}$ satisfies  $P[A] > 0$. Since $Q^1\approx P$, we have $Z_T^1>0.$ From assumption (\textbf{H4}), we have  $(f'(Z_T^*)(Z_T^1-Z_T^*))^-=+\infty$ on $A$. It follows that $\E_P[(f'(Z_T^*)(Z_T^1-Z_T^*))^-]=\infty.$
From Lemma \ref{lemme}, we know  that $[(f'(Z_T^*)(Z_T^1-Z_T^*))^+] \in L^1(P)$,
then we obtain $\E_P[f'(Z_T^*)(Z_T^1-Z_T^*)]=-\infty.$ This gives a contradiction to (\ref{ineq6}). Therefore $Q^*\thickapprox P.$
\ep
\end{proof}
\subsection{Bellman optimality principle}
In this section we establish the martingale optimality principle which is a direct consequence of Theorems 1.15 , 1.17 and 1.21 in El Karoui \cite{ELK81}.
For this reason, some notations are introduced. Let $\Sc$ denote the set of all $\Fc$-stopping times $\tau$ with
values in $[0,T]$ and $\Dc$ the space of all density processes $Z^Q$ with $Q\in \Qc_f$ . We define
$$\Dc(Q,\tau):=\{Z^{Q'}\in \Dc; Q=Q'\; \textrm{on} \;\Fc_\tau \},$$
$$\Gamma(\tau,Q):= \E_Q[c(.,Q)|\Fc_\tau],$$
and the minimal conditional cost at time $\tau$ ,
$$J(\tau,Q) := Q\;\;\textrm{-}\underset{Q'\in \Dc(Q,\tau)}{\essinf}\Gamma(\tau,Q').$$
Then the optimization problem (\ref{min1}) can be reformulated to
\begin{equation}\label{remin} \textrm{find} \inf\limits_{Q\in \Qc_f}\Gamma(Q) = \inf\limits_{Q\in \Qc_f}\E_Q[c(.,Q)] = \E_P [J(0;Q)], \end{equation}
by using the dynamic programming principle and the fact that $Q=P$ on $\Fc_0$ for every $Q\in \Qc_f.$
 \\ In the following, the  Bellman martingale optimality principle  is given. \begin{Proposition}
 1. The family  $\{J(\tau, Q)|\tau \in \mathcal{S},Q \in Q_f\}$ is a submartingale system i.e.
 $$\textrm{for all} \,\, (\tau, \tau')\in \mathcal{S}^{2} s.t \;\; \tau\geq \tau'; E[J(\tau, Q)|\Fc_{\tau'}]\geq J(\tau', Q) $$
 \\2. $Q^*\in Q_f$ is optimal $\Leftrightarrow$ $\{J(\tau, Q^*)|\tau \in \mathcal{S}\}$ is a martingale system i.e.
  $$\textrm{for all} \,\, (\tau, \tau')\in \mathcal{S}^{2} s.t \;\; \tau\geq \tau'; E[J(\tau, Q^{*})|\Fc_{\tau'}]= J(\tau', Q^{*}) $$
 \\3. For all $Q \in Q_f$ there is an adapted RCLL process  $J^Q=(J_t^Q)_{0\leq t \leq T}$ which is a right closed $Q$-submartingale such that :  $J_{\tau}^Q=J(\tau, Q)\;\; Q$-a.s for each stopping time $\tau.$\\
\end{Proposition}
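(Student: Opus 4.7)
My plan is to verify the hypotheses of the abstract Bellman optimality principle of El Karoui \cite{ELK81} (Theorems~1.15, 1.17 and 1.21) for the family of conditional costs $\{\Gamma(\tau,Q'): Q'\in\Dc(Q,\tau)\}$, and then invoke those theorems to deduce the three claims.

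The first step is a stability (pasting) property of the admissible class. Given $Q^1, Q^2 \in \Dc(Q,\tau)$ and $B\in\Fc_\tau$, the measure $\tilde Q$ with density
$$\tilde Z_t \;=\; Z^Q_{t\wedge\tau}\Bigl(\1_B\,\frac{Z^{Q^1}_{t\vee\tau}}{Z^{Q^1}_\tau}+\1_{B^c}\,\frac{Z^{Q^2}_{t\vee\tau}}{Z^{Q^2}_\tau}\Bigr)$$
is a $P$-martingale starting at $1$; convexity of $f$ together with $d(Q^i|P)<\infty$ keeps its $f$-divergence finite, so $\tilde Q\in\Dc(Q,\tau)$. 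The second step is to exhibit the flow structure of the cost: manipulating the explicit expressions for $\Uc^\delta_{0,T}$ and $\Rc^\delta_{0,T}(Q)$ and applying Bayes' rule to the ratios $Z^{Q'}_s/Z^{Q'}_\tau$ yields, for stopping times $\tau\le\sigma$, the recursion
$$\Gamma(\tau,Q')\;=\;\E_{Q'}\!\bigl[\Lambda^{\tau,\sigma}(Q')+\Gamma(\sigma,Q')\,\big|\,\Fc_\tau\bigr],$$
in which the bridging term $\Lambda^{\tau,\sigma}(Q')$ is $\Fc_\sigma$-measurable and depends on $Q'$ only through $Z^{Q'}|_{[\tau,\sigma]}$, while $\Gamma(\sigma,Q')$ only involves $Z^{Q'}|_{[\sigma,T]}$. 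These two properties are exactly what allow the $\essinf$ defining $J(\tau,Q)$ to be realized as the $Q$-a.s.\ monotone limit of a countable decreasing sequence, since pasting two candidates along the set where one of them has the smaller conditional cost produces a third with smaller $\Gamma$.

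With these ingredients in hand, El Karoui's Theorem~1.15 gives claim~(1): for $\tau\le\sigma$, pasting at $\tau$ a minimizing sequence for $J(\sigma,\cdot)$ onto any candidate approaching $J(\tau,Q)$ yields $\E_Q[J(\sigma,Q)\mid\Fc_\tau]\ge J(\tau,Q)$. Claim~(2) follows by writing $\Gamma(Q)=\E_P[J(T,Q)]$ and telescoping: $Q^*$ is optimal if and only if the $Q^*$-submartingale $J(\cdot,Q^*)$ has constant expectation, i.e.\ is a martingale. For claim~(3), the integrability bounds provided by Proposition~\ref{prop1}, Proposition~\ref{cont1} and Lemma~\ref{lemma2} imply that $t\mapsto\E_Q[J(t,Q)]$ is right-continuous, so the classical modification theorem produces an RCLL $Q$-submartingale $J^Q$ with $J^Q_\tau=J(\tau,Q)$ $Q$-a.s.\ for every stopping time $\tau$, closed on the right by $c(\cdot,Q)$.

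The main technical obstacle will be the rigorous verification of the pasting property---in particular ensuring that the $f$-divergence of the pasted measure remains finite, with a careful treatment of the null set $\{Z^Q_\tau=0\}$ via the convention $0/0=1$---and the precise derivation of the recursive cost identity, where the ratio structure of $\Rc^\delta_{t,T}$ must be tracked through Bayes' rule; once these are in place the rest is a direct translation of El Karoui's abstract machinery.
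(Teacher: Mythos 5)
Your approach is the same as the paper's: check El Karoui's hypotheses (integrability of the conditional costs, stability of $\Dc(Q,\tau)$ under bifurcation, coherence of the cost functional) and then invoke Theorems 1.15, 1.17 and 1.21. Your pasted density is the paper's $Z^{Q'}\1_A+Z^{Q}\1_{A^c}$ written with ratios; the two coincide because elements of $\Dc(Q,\tau)$ share the same density up to time $\tau$, so there is no real difference there.

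One justification, however, is wrong as stated: the finiteness of the $f$-divergence of the pasted measure does \emph{not} follow from convexity of $f$. Pointwise one has $f(\tilde Z_T)=\1_B f(Z^{Q^1}_T)+\1_{B^c}f(Z^{Q^2}_T)$ --- no convex combination ever enters --- and to dominate $\E_P[\1_B f(Z^{Q^1}_T)]$ by $d(Q^1|P)$ plus a constant you need the lower bound \textbf{(H2)}, $f\ge-\kappa$, which gives $|f|\le f+2\kappa$ and hence $d(\tilde Q|P)\le d(Q^1|P)+d(Q^2|P)+C\kappa<\infty$; this is exactly how the paper argues, and without a lower bound on $f$ the truncated expectation could be infinite. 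A second point you should make explicit rather than gesture at: since $c(\cdot,Q)$ is not nonnegative, El Karoui's theorems require $J(\tau,Q)\in L^1(Q)$, and in particular a bound on $(J(\tau,Q))^-$ that is uniform over $Q'\in\Dc(Q,\tau)$. The paper obtains this (Lemma \ref{lemma3}) from the conditional estimate (\ref{ineq7}) by choosing $\gamma$ so that $\beta e^{-T\Vert\delta\Vert_\infty}-\frac{1}{\gamma}=0$, which yields $\Gamma(\tau,Q')\ge -B$ with $B$ independent of $Q'$ and $Q$-integrable. Your citation of Proposition \ref{cont1} and Lemma \ref{lemma2} points at the right ingredients, but those give only unconditional bounds; the conditional, $Q'$-uniform lower bound is the step that prevents the essential infimum from being $-\infty$, and it needs to be written out. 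With these two repairs the rest of your argument goes through as in the paper.
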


The proof is  given in the Appendix. Moreover, we should to apply Theorems 1.15, 1.17, 1.21 in El Karoui \cite{ELK81}. These results require that:
\begin{enumerate}
\item[1.]\label{condition1} $c\geq 0$ or $\displaystyle \inf_{Q'\in \Dc(Q,t)}E_{Q'}[|c(\cdot,\ Q')|]<\infty$ for all $\tau\in \Sc$ and $Q\in \mathcal{Q}_{f}$,
\item[2.] The space $\mathcal{D}$ is compatible i.e.
\\For $Z^Q\in \Dc, \tau \in \Sc$ and $Z^{Q'}\in \Dc(Q,\tau)$,  we have $Q|_{\Fc_\tau}=Q'|_{\Fc_\tau}.$
\item[3.] The space $\mathcal{D}$  stable under bifurcation i.e.
\\For all  $Z^Q\in \Dc,\tau \in \Sc , A\in \Fc_\tau$ and $Z^{Q'}\in \Dc(Q,\tau)$, we have
$$Z^Q|\tau_A|Z^{Q'}:=Z^{Q'}\mathbf{1}_A+Z^Q\mathbf{1}_{A^c} \in \Dc(Q,\tau).$$
\item[4.] The cost functional is coherent i.e.
\\For all $Z^Q\in \Dc$ and $Z^{Q'}\in \Dc$, we have $c(w,Q)=c(w,Q')$ on the set  $\{w,Z_T^Q(w)=Z_T^{Q'}(w)\}$ $Q-a.s$ and $Q'-a.s.$
\end{enumerate}

\begin{Remark}
In the proof of the Bellman Optimality principle, condition (\ref{condition1}) ensures that $J(\tau,Q)\in L^1(Q)$ for each $\tau\in \Sc.$  In this case we prove such a result directly (see Lemma \ref{lemma3}).
\end{Remark}

\section{Class of Consistent time penalty}
In this section, we assume that  filtration $(\Fc_t)_{0\leq t \leq T}$ is generated by a d-dimensional Brownian motion $W.$ Then, for every measure $Q\ll P$ on $ \Fc_T $, there is a  progressively measurable process $ (\eta_t)_{0 \leq  t \leq T} $ which takes values in $\mathbb{R}^d$ such that $ \int_0^T \| \eta_t \|^2 dt<+ \infty, \; P.a.s $ and the density  process of  $Q$ with respect to $ P $ is an RCLL local martingale  $Z^\eta=(Z^\eta_t)_{0 \leq t \leq T} $ given by:
\begin{equation}\label{representation}Z_t^\eta = \Ec(\int_0^t\eta_udW_u) \,\; Q.p.s, \forall t \in [0,T]. \end{equation}
where $\Ec(M)_t = \exp (M_t -\frac{1}{2}\langle M \rangle_t)$ denotes the stochastic exponential of a continuous local martingale $M$.
$Q^\eta$ denotes the measure which admits $Z^\eta$ as density with respect to the reference probability measure $P.$
We introduce a consistent time penalty given by:
$$ \gamma_t(Q^\eta) = E_{Q^\eta} [\int_t^T h (\eta_s) ds | \Fc_t] $$
where $ h:  \R^d \rightarrow [0, + \infty] $ is a convex function, proper and lower semi-continuous function such that $ h (0)= 0 .$
We also assume that there are two positive constants $ \kappa_1 $ and $ \kappa_2 $ satisfying:
$$ h(x) \geq \kappa_1 \| x\|^2 - \kappa_2.$$
The penalty term is defined by
\begin{equation}\label{penalty definition ctc} \Rc_{t, T}^\delta(Q^\eta)= \dint_t^ T \delta_s \frac{S_s^\delta}{S_t ^ \delta} (\int_t^s h(\eta_u) du) ds + \frac{S_T^\delta} {S_t^\delta} \int_t^T h (\eta_u) du,  \; \forall  \, 0 \leq t \leq T \end{equation}
for $ Q \ll P $ on $\Fc_T .$
As in the case of $f$-divergence penalty,  the following optimization problem has to be solved:
\begin{equation} \label{min}
\textrm{minimize the functional} \, Q^\eta \mapsto \Gamma(Q^\eta):=\E_{Q^{\eta}}[c (.,Q^\eta)]
\end{equation}
over an appropriate class of probability measures $ Q^\eta \ll P. $
\begin{Definition}
For each probability measure $Q^\eta $ on $ (\Omega, \Fc) $, the penalty function is defined:
\begin{equation*} \label{penalty}
\gamma_t(Q^\eta):=\left\{\begin{array}{cc}
\E_{Q^\eta} [\dint_t ^T h(\eta_s)ds | \Fc_t] & \textrm {if}\, \, Q^\eta \ll P \,\, \textrm {on} \, \Fc_T \\
+ \infty & \textrm{otherwise}.
\end{array} \right.
\end{equation*}
We note $\Qc_f^c $ the space of all probability measures $ Q^\eta $ on $(\Omega, \Fc) $ such that $Q^\eta \ll P$ on $\Fc_T$ and $\gamma_0 (Q^\eta)< +\infty$ and $ \Qc^{c,e}_f:=\{Q^\eta \in \Qc_f^{c} | Q \approx P \, \textrm {on} \, \Fc_T \} $.
\end{Definition}
\begin{Remark}\label{rem2}
\begin{enumerate}
\item[1-] We note that $\Qc_f^{c,e}$ is a  non empty set because $ P \in \Qc_f^{c,e}.$
\item[2-]% The process $(\gamma_t(Q^\eta))_{0 \leq t \leq T} $ defines a family of dynamic coherent risk measures  $(\rho_t)_{0 \leq t \leq T} $. Moreover $\gamma_0 $ is the minimal penalty function of $ \rho_0.$
The particular case of $ h(x)= \frac{1}{2}|x|^2 $ corresponds to the entropic  penalty. Indeed
\begin{equation*}
\begin{split}
H(Q^\eta|P)&= \E_{Q^\eta} [\log(\frac{dQ^\eta}{dP})]
\\ &=\E_{Q^\eta} [\int_0^T\eta_u dW_u-\frac{1}{2}\int_0^T|\eta_u|^2 du]
\end{split}
\end{equation*}
Since $(\int_0^.\eta_u dW_u)$ is a local martingale under $P$, then by the Girsanov theorem $(\int_0^.\eta_u dW_u)-\int_0^.|\eta_u|^2 du$ is a local martingale under $Q^\eta$ and so
\begin{equation*}
\begin{split}
H(Q^\eta|P) &=\E_{Q^\eta} [\int_0^T\eta_u dW_u-\int_0^T|\eta_u|^2 du+\frac{1}{2}\int_0^T|\eta_u|^2 du]
\\&=\E_{Q^\eta} [\frac{1}{2}\int_0^T|\eta_u|^2 du]=\gamma_0(Q^\eta).
\end{split}
\end{equation*} \ep
\item[3-] For a  general function $h$ we have for all $Q^\eta \in \Qc_f^c$,
\begin{equation}\label{estimation entropy} H(Q^\eta | P) \leq\frac{1}{2\kappa_1}\gamma_0 (Q^\eta) + \frac{T\kappa_2}{2\kappa_1}.
\end{equation}
Indeed:
\begin{equation} \label{ineqalityentropy}
\begin{split}
H(Q^\eta|P) = \E_{Q^\eta}[\frac{1}{2} \dint_0^T |\eta_s|^2 ds] & \leq \E_P [\frac {1}{2\kappa_1}(\dint_0^T (h(|\eta_s|) + \kappa_2) ds)]
\\ & \leq \E_{Q^\eta} [\frac{1}{2\kappa_1}(\dint_0^T (h (|\eta_s|)ds) + \frac{T\kappa_2}{2 \kappa_1}]
\\ & = \frac{1}{2\kappa_1}\gamma_0 (Q^\eta) + \frac{T\kappa_2}{2\kappa_1}
\end{split}
\end{equation}
\\In particular $H(Q^\eta|P)$ is finite for all $Q^\eta \in \Qc_f^c.$\ep
\end{enumerate}
\end{Remark}
The well-posdness of the problem (\ref{min}) is guaranteed by the integrability condition of $c(.,Q^\eta)$ under $Q^\eta.$ Since $\gamma_0(Q^\eta)< +\infty$ and $h$ takes values on $[0,+\infty]$ together with assumption \textbf{(A1)}, we have for all $Q^\eta \in \Qc_f^c$; $\E_{Q^\eta}[\Rc_{0,T}(Q^\eta)]<+\infty.$
It remains to have $\E_P[Z^{\eta}|\Uc_{0,T}|]<+\infty.$ We apply the inequality (\ref{convex inequality1}) with $f(x)=x\log x$, we obtain
\begin{equation}\label{Aprimejustification}
\E_P[Z^\eta|\Uc^\delta_{0,T}|]\leq\E_P[Z\eta\log Z^\eta +e^{|\Uc^\delta_{0,T}|-1}]
\end{equation}
From the inequality (\ref{estimation entropy}), we have $H(Q^\eta|P)=\E_P[Z^\eta\log Z^\eta]<+\infty.$ Then the right hand side of (\ref{Aprimejustification}) is finite if we  replace the assumption (\textbf{A2}) by:
%To guarantee the well-posedness of the problem (\ref{min}), it is necessary  to replace the Assumption (\textbf{A2}) by :\\
\\
\\ \textbf{(\textrm{A'2})}    The cost process $U$ belongs to $D_1 ^{\exp}$ and the terminal target $\bar{U}$ is in $L^{\exp}$.
\begin{Remark}\label{integrability of U}
Under Assumption \textbf{(\textrm{A'2})} , we have
\begin{equation} \label{integrability}
\lambda \int_ {0}^{T} |U_s|ds + \mu |\bar{U}_T|\in L^{\exp},\;\; \textrm{for all}\;\;  (\lambda, \mu) \in \mathbb{R}^2_{+}.
\end{equation}
Indeed, since $ x \mapsto \exp(x) $ is convex , we have
\begin{equation*}
\begin{split}
&\mathbb{E}_{P} [\exp (\lambda \int_{0}^{T} |U_s|ds + \mu |\bar{U}_T|)]
\\ & =\mathbb{E}_{P} [\exp (\frac{1}{2} \times 2 \lambda \int_{0}^{T} |U_s|ds + \frac{1}{2}\times 2 \mu |\bar{U}_T|)]
\\ & \leq \mathbb{E}_{P} [\frac{1}{2} \exp (2 \lambda \int_{0}^{T} |U_s| ds) + \frac{1}{2}\exp (2\mu |\bar{U}_T|)]
\\ & = \frac{1}{2} \mathbb{E}_{P} [\exp (2 \lambda \int_{0}^{T} | U_s|ds)] + \frac{1}{2} \mathbb{E}_{P} [\exp (2 \mu |\bar{U}_T|)],
\end{split}
\end{equation*}
which is finite by assumption \textbf{(\textrm{A'2})} .
\end{Remark}
\subsection{Existence of an optimal model}
The main result of this section is to prove the existence of a unique probability $ Q^{\eta*} $ that minimizes the functional $Q^\eta\mapsto \Gamma(Q^\eta)$ in all probability $ Q^\eta \in \Qc_f^c $.
We begin this section by giving some estimates for $\Gamma(Q^\eta)$ for all $Q^\eta \in \Qc_f^c.$
\begin{Proposition}\label{prop3.1}
Under assumptions \textbf{(A1)-(A'2)}, we have for all $Q^\eta \in \Qc_f^c:$
\begin{enumerate}
\item $c(.,Q^\eta)\in L^1(Q^\eta).$
\item $\Gamma (Q^\eta) \leq C(1+\gamma_0 (Q^\eta))$ for some positive constant $C$ which depends only on $\alpha ,\bar{\alpha} ,\beta ,\delta ,T,(U_s)_s\in[0,T])$ and $\bar{U}_T.$
\end{enumerate}
In particular $ \Gamma (Q^\eta) $ is well defined and finite for all $ Q^\eta\in \Qc_f^c.$
\end{Proposition}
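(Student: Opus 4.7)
The plan is to mimic the structure of Proposition 2.1, but adapt the key ingredients to the Brownian/consistent-time setting: where the $f$-divergence proof used the Young inequality $xy \leq f(y) + f^*(x)$, here the natural tool is the variational (entropy/Legendre) inequality for $H(Q^\eta|P)$, combined with the quadratic lower bound on $h$ that already gave the estimate (3.X) on $H(Q^\eta|P)$.

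First I would split $c(\cdot,Q^\eta) = \Uc_{0,T}^\delta + \beta \Rc_{0,T}^\delta(Q^\eta)$ and treat the two pieces separately. For the penalty part, since $h \geq 0$ (because $h:\R^d \to [0,+\infty]$), the process $\Rc_{0,T}^\delta(Q^\eta)$ is already nonnegative. Using $0 \leq S^\delta \leq 1$, $\delta \leq \|\delta\|_\infty$ and Tonelli, I would estimate
\[
\E_{Q^\eta}\!\left[\int_0^T \delta_s S_s^\delta\!\left(\int_0^s h(\eta_u)du\right)ds + S_T^\delta\int_0^T h(\eta_u)du\right] \leq \bigl(\|\delta\|_\infty T + 1\bigr)\,\gamma_0(Q^\eta),
\]
which is finite precisely because $Q^\eta \in \Qc_f^c$.

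For the utility part, setting $R := \alpha\int_0^T|U_s|ds + \bar{\alpha}|\bar{U}_T|$, I bound $|\Uc_{0,T}^\delta| \leq R$. The key tool is then the Donsker–Varadhan/Fenchel duality
\[
\E_{Q^\eta}[\lambda R] \leq \log \E_P\!\left[e^{\lambda R}\right] + H(Q^\eta|P), \qquad \lambda > 0,
\]
which, after dividing by $\lambda$, yields $\E_{Q^\eta}[R] \leq \tfrac{1}{\lambda}\log\E_P[e^{\lambda R}] + \tfrac{1}{\lambda}H(Q^\eta|P)$. By Remark (integrability of $U$) together with Assumption (A'2), the exponential moment $\E_P[e^{\lambda R}]$ is finite for every $\lambda>0$. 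Then I invoke the entropy estimate (\ref{estimation entropy}),
\[
H(Q^\eta|P) \leq \tfrac{1}{2\kappa_1}\gamma_0(Q^\eta) + \tfrac{T\kappa_2}{2\kappa_1},
\]
so $\E_{Q^\eta}[|\Uc_{0,T}^\delta|] < +\infty$ and statement (1) follows.

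For statement (2), I simply collect the two bounds:
\[
\Gamma(Q^\eta) \leq \tfrac{1}{\lambda}\log\E_P\!\left[e^{\lambda R}\right] + \tfrac{1}{\lambda}\left(\tfrac{\gamma_0(Q^\eta)}{2\kappa_1} + \tfrac{T\kappa_2}{2\kappa_1}\right) + \beta\bigl(\|\delta\|_\infty T + 1\bigr)\gamma_0(Q^\eta),
\]
which has the form $C(1+\gamma_0(Q^\eta))$ for a constant $C$ depending only on $\alpha,\bar{\alpha},\beta,\delta,T,U,\bar{U}$ (and the structural constants $\kappa_1,\kappa_2$ of $h$). There is no serious obstacle here: the only subtlety is making sure the exponential integrability from (A'2) lines up with the linear factor $\lambda$ in the duality formula, which is exactly why (A'2) is stated in the form $\E_P[\varphi(\gamma\cdot)]<\infty$ for every $\gamma>0$. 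Compared to the $f$-divergence case, the argument is actually simpler because the penalty is automatically nonnegative, so no analogue of (\ref{c1'}) is needed.
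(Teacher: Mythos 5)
Your proof is correct and follows essentially the same route as the paper: the same decomposition of $c(\cdot,Q^\eta)$, the same bound $\bigl(\|\delta\|_\infty T+1\bigr)\gamma_0(Q^\eta)$ on the penalty via the nonnegativity of $h$, and the same reduction of the utility term to $H(Q^\eta|P)$ followed by the entropy estimate (\ref{estimation entropy}). The only (immaterial) difference is that you invoke the integrated Donsker--Varadhan form $\E_{Q^\eta}[\lambda R]\leq \log \E_P[e^{\lambda R}]+H(Q^\eta|P)$ where the paper uses the pointwise Young inequality $Z_T^\eta R\leq Z_T^\eta\log Z_T^\eta+e^{-1}e^{R}$ coming from (\ref{convex inequality1}) with $f(x)=x\log x$.
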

\begin{proof}
\begin{enumerate}
\item Similar arguments as used in Proposition \ref{prop1} insure that  $R$ belongs to $L^1(Q^\eta).$ In addition, using assumption (\textbf{A1}), we get
\begin{equation}
\begin{split}
&\vert\dint_0^T\delta_s S_s^\delta (\int_0^sh(\eta_u)du)ds +  S_T^\delta\int_0^T h(\eta_s)ds \vert
\\&\leq \dint_0^T\Vert \delta \Vert_\infty \Big(\int_0^T h(\eta_u)du\Big) ds +\int_0^T h(\eta_s)ds
\\&\leq \Big( \Vert \delta \Vert_\infty T+1\Big)\int_0^T h(\eta_s)ds \in L^1(Q^\eta).
\end{split}
\end{equation}
\item From inequality (\ref{convex inequality1}) with $f(x)=x\log x$, we have:
\begin{equation*}
\begin{split}
\Gamma(Q^\eta) & \leq \E_P[Z^\eta_TR] + \beta \E_{Q^{\eta}} [\dint_0^T\delta_s S_s^\delta (\int_0^sh(\eta_u)du)ds + S_T^\delta\int_0^T h(\eta_u)du]
\\ & \leq \E_P[Z^\eta_T\log Z^{\eta}_T + e^{-1} e^R] + \beta(\parallel\delta\parallel_\infty T+1) \E_Q^{\eta} [\int_0^T h(\eta_u)]
\\ & \leq H(Q^\eta|P) + e^{-1} \E_P[e^R] + \beta (\parallel\delta\parallel_\infty T+1) \gamma_0 (Q^\eta).
\end{split}
\end{equation*}
Inequality (\ref{estimation entropy}) and  assumption (\textbf{A'2}) give the following $$\Gamma(Q^\eta)\leq (\frac{1}{2\kappa_1} + \beta (\parallel\delta\parallel_\infty T+1)) \gamma_0 (Q^\eta) + e^{-1}\E_P[e^R] + \frac{T\kappa_2}{2\kappa_1}.$$
The desired result follows by taking $ C: = \max (e^{-1} E_P [e^R] + \dfrac {t \kappa_2} {2 \kappa_1}, \dfrac{1}{2 \kappa_1} + \beta (\parallel\delta\parallel_\infty T+1)) $
which is finite.\ep
\end{enumerate}
\end{proof}
The following proposition gives a lower bound for our criterion $\Gamma(Q^\eta)$ for all $ Q^\eta\in \Qc_f^c.$
\begin{Proposition} \label{cont}
Under the assumptions \textbf{(A1)-(A'2)}, there exists a positive constant $ K $ such that for all $ Q^\eta \in \Qc_f $
$$\gamma_0 (Q^\eta) \leq K (1 + \Gamma (Q^\eta)).$$
In particular $\inf\limits_{Q^\eta \in \Qc_f} \Gamma(Q^\eta)>-\infty .$
\end{Proposition}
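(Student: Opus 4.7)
The plan is to bound $\Gamma(Q^\eta)$ from below by a positive multiple of $\gamma_0(Q^\eta)$ minus a finite constant, mirroring Proposition~\ref{cont1}. The role played by the $f$--$f^{\ast}$ duality in the $f$-divergence case will here be played by the Legendre/variational formula for the relative entropy, combined with the comparison~\reff{estimation entropy} between $\gamma_0(Q^\eta)$ and $H(Q^\eta|P)$ that was already established in Remark~\ref{rem2}.

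First I would simplify the penalty. By Fubini and the identity $\frac{d}{ds}S_s^\delta = -\delta_s S_s^\delta$, the expression~\reff{penalty definition ctc} with $t=0$ collapses to
$$\Rc_{0,T}^\delta(Q^\eta) \;=\; \int_0^T S_u^\delta\, h(\eta_u)\,du.$$
Since $h\geq 0$ and $e^{-T\|\delta\|_\infty}\leq S_u^\delta \leq 1$, taking $\E_{Q^\eta}$ immediately yields
$$\beta\,\E_{Q^\eta}\bigl[\Rc_{0,T}^\delta(Q^\eta)\bigr] \;\geq\; \beta\, e^{-T\|\delta\|_\infty}\,\gamma_0(Q^\eta).$$

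Next I would control the utility part by $\E_{Q^\eta}[\Uc_{0,T}^\delta]\geq -\E_{Q^\eta}[R]$ with $R:=\alpha\int_0^T|U_s|\,ds+\bar{\alpha}|\bar{U}_T|$ (using $0\leq S^\delta\leq 1$). For any $\gamma>0$ the variational formula for relative entropy gives
$$\E_{Q^\eta}[R] \;\leq\; \frac{1}{\gamma}\bigl(H(Q^\eta|P)+\log\E_P[e^{\gamma R}]\bigr),$$
and by Assumption~\textbf{(A'2)} together with Remark~\ref{integrability of U}, $\log\E_P[e^{\gamma R}]$ is finite for every $\gamma>0$. Plugging in the bound~\reff{estimation entropy} then produces
$$\E_{Q^\eta}[R] \;\leq\; \frac{1}{2\gamma\kappa_1}\,\gamma_0(Q^\eta) + \frac{T\kappa_2}{2\gamma\kappa_1} + \frac{1}{\gamma}\log\E_P[e^{\gamma R}].$$

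Assembling the two pieces I obtain
$$\Gamma(Q^\eta) \;\geq\; \Bigl(\beta\, e^{-T\|\delta\|_\infty} - \frac{1}{2\gamma\kappa_1}\Bigr)\gamma_0(Q^\eta) - C(\gamma),$$
with $C(\gamma)<+\infty$ under \textbf{(A1)-(A'2)}. Choosing $\gamma$ large enough that $\mu:=\beta e^{-T\|\delta\|_\infty}-\frac{1}{2\gamma\kappa_1}>0$ and setting $K:=\mu^{-1}\max\bigl(1,C(\gamma)\bigr)$ gives the desired $\gamma_0(Q^\eta)\leq K(1+\Gamma(Q^\eta))$, and taking the infimum over $Q^\eta$ proves $\inf_{Q^\eta\in\Qc_f^c}\Gamma(Q^\eta)>-\infty$. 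I expect the main obstacle to be the variational bound that converts $\E_{Q^\eta}[R]$ into a small fraction of $\gamma_0(Q^\eta)$: unlike the $f$-divergence setting where Young's inequality couples directly to $d(Q|P)$, here one must first route through the relative entropy via~\reff{estimation entropy} (which in turn rests crucially on the quadratic lower bound $h(x)\geq\kappa_1\|x\|^2-\kappa_2$) and then use the exponential integrability of $R$ provided by~\textbf{(A'2)}; the remaining constants can then be balanced exactly as in Proposition~\ref{cont1}.
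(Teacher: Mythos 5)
Your proposal is correct and follows essentially the same route as the paper: the same lower bound $\beta e^{-T\|\delta\|_\infty}\gamma_0(Q^\eta)$ on the penalty term, the same reduction of the utility term to $-\E_{Q^\eta}[R]$, the same passage through the entropy comparison \reff{estimation entropy}, and the same balancing of constants by taking $\gamma$ large. The only cosmetic difference is that you invoke the Donsker--Varadhan variational formula $\E_{Q^\eta}[R]\leq\frac{1}{\gamma}(H(Q^\eta|P)+\log\E_P[e^{\gamma R}])$ where the paper uses the pointwise Young inequality $xy\leq\frac{1}{\lambda}(y\log y+e^{-1}e^{\lambda x})$; both rest on the same exponential integrability of $R$ from \textbf{(A'2)} and yield the same conclusion.
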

\begin{proof}
Under the assumption (\textbf{A1}) and the nonnegativity of the function $h$, we have
\begin{equation}\label{ineq8}
\begin{split}
\beta E_{Q^\eta} [\dint_0^T \delta_s S_s^\delta (\int_0 ^s h(\eta_u)du) ds + S_T^\delta \int_0^T h(\eta_u)du] &\geq \beta E_{Q^\eta} [S_T ^ \delta \int_0^T h(\eta_u)du]\\& \geq \beta e ^ {- \|\delta \|_{\infty} T} \gamma_0 (Q^\eta).
\end{split}
\end{equation}
Moreover, since $0 \leq S^\delta \leq 1 $, we have:
\begin{equation}\label{ineq9}
\E_{Q^\eta} [\Uc_{0,T}^\delta]\geq -\E_{Q^\eta} [R] =- \E_P[Z_T^\eta R].
\end{equation}
From inequality (\ref{convex inequality2}) where $f(x)=x\log x,$ and as a consequence $f^*(\lambda x)= e^{\lambda x -1}$ we have
\begin{equation} \label{convexinequality}
xy \leq \frac{1}{\lambda} (y \ln y + e^{-1} e^{\lambda x} )\;\; \textrm{for all} \;\; (x,y,\lambda) \in \R \times \R^*_+ \times \R^*
\end{equation}
We get:
\begin{equation*}
\begin{split}
\E_\mathbb{P}[Z_T^\eta R] & \leq \frac{1}{\lambda} \E_P[Z_T^\eta \log Z_T^\eta +e^{-1} e^{\lambda R}]
 = \frac {1}{\lambda} H(Q^\eta| P) + \frac {e^{-1}} {\lambda} \E_P [e^{\lambda R}].
\end{split}
\end{equation*}
From inequality (\ref{estimation entropy}), we deduce that
\begin{equation}\label{ineq10}
\E_P[Z_T^\eta R]\leq \frac{1}{2\lambda \kappa_1} \gamma_0 (Q^\eta) + \frac{T\kappa_2}{2 \lambda \kappa_1} + \frac {e^{-1}} {\lambda} \E_P [e^ {\lambda R}].
\end{equation}
From the definition of $\Gamma(Q^\eta)$, it can be deduced
$$\Gamma(Q^\eta)=E_P[Z_T\Uc_{0,T}]+\beta E_{Q^\eta} [\dint_0^T \delta_s S_s^\delta (\int_0 ^s h(\eta_u)du) ds + S_T^\delta \int_0^T h(\eta_u)du].$$
From (\ref{ineq8}),(\ref{ineq9}) and (\ref{ineq10}), we obtain
\begin{equation*}
\begin{split}
\Gamma (Q^\eta) & \geq \beta e^{-\| \delta \|_{\infty} T} \gamma_0 (Q^\eta) - \frac{1}{2\lambda \kappa_1} \gamma_0 (Q ^\eta) - \frac {T \kappa_2} {2 \lambda \kappa_1} - \frac{e^{-1}}{\lambda} \E_P [e^{\lambda R}]
\\ & = (\beta e^{-\| \delta \|_{\infty} T} - \frac{1}{2 \lambda \kappa_1}) \gamma_0 (Q^\eta)-\dfrac {T \kappa_2} {2 \lambda \kappa_1} - \dfrac {e^{-1}} {\lambda} \E_P [e^{\lambda R}].
\end{split}
\end{equation*}
Choosing $\lambda> 0 $ large enough, there exists $\mu>0$  such that  $ \beta e^{-\| \delta \|_{\infty}T} - \dfrac{1}{2\lambda \kappa_1} \geq \mu. $
From Remark \ref{integrability of U}, we deduce that $\E_P [e ^ {\lambda R}]$ is finite. The desired result is obtained by taking  $ K:=\frac{1}{\mu} \max(1, \dfrac{T \kappa_2}{2 \lambda \kappa_1} + \dfrac {e^{-1}} {\lambda} \E_P [e^{\lambda R }]),$

\ep
\end{proof}
Combining the previous Proposition and the inequality (\ref{estimation entropy}), we obtain the following result.
\begin{Corollary} \label{controlentropy}
Under the assumptions \textbf{(A1)-(A'2)}, there exists  a positive constant $ K'$ such that for all $ Q^\eta \in \Qc_f^c $
$$H(Q^\eta|P) \leq K' (1 + \Gamma (Q^\eta)).$$
\end{Corollary}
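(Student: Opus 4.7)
The plan is to obtain this bound by directly chaining the two ingredients already established: the lower estimate $\gamma_0(Q^\eta) \leq K(1 + \Gamma(Q^\eta))$ proved in Proposition~\ref{cont}, and the entropy control $H(Q^\eta | P) \leq \frac{1}{2\kappa_1}\gamma_0(Q^\eta) + \frac{T\kappa_2}{2\kappa_1}$ from inequality \reff{estimation entropy} in Remark~\ref{rem2}. Both facts are valid for every $Q^\eta \in \Qc_f^c$ under assumptions \textbf{(A1)-(A'2)}, so no additional hypothesis is needed.

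The first step is to invoke inequality \reff{estimation entropy} to reduce the problem of bounding $H(Q^\eta|P)$ to bounding $\gamma_0(Q^\eta)$. The second step is to substitute into the right hand side the estimate from Proposition~\ref{cont}, yielding
\begin{equation*}
H(Q^\eta | P) \;\leq\; \frac{1}{2\kappa_1}\bigl(K(1+\Gamma(Q^\eta))\bigr) + \frac{T\kappa_2}{2\kappa_1} \;=\; \frac{K}{2\kappa_1}\,\Gamma(Q^\eta) + \frac{K + T\kappa_2}{2\kappa_1}.
\end{equation*}
The third step is simply to absorb the two constants into a single one, for example by setting $K' := \frac{1}{2\kappa_1}\max(K,\, K + T\kappa_2)$, which gives the desired conclusion $H(Q^\eta | P) \leq K'(1+\Gamma(Q^\eta))$.

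There is no real obstacle here; the corollary is a one-line linear combination of two previously proved inequalities, and the only care needed is to ensure the constants $\kappa_1$, $\kappa_2$ from the coercivity assumption $h(x) \geq \kappa_1 \|x\|^2 - \kappa_2$ and the constant $K$ from Proposition~\ref{cont} are all finite under \textbf{(A1)-(A'2)}, which has already been verified. Thus the proof is essentially a substitution, and no new estimate, limiting argument, or measurability issue needs to be addressed.
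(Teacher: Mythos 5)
Your proposal is correct and is exactly the argument the paper intends: the paper's own proof consists of the single remark that the corollary follows ``by combining the previous Proposition and the inequality \reff{estimation entropy}'', i.e.\ chaining $H(Q^\eta|P)\leq \frac{1}{2\kappa_1}\gamma_0(Q^\eta)+\frac{T\kappa_2}{2\kappa_1}$ with $\gamma_0(Q^\eta)\leq K(1+\Gamma(Q^\eta))$ and absorbing constants, just as you do. The only (cosmetic) caveat, which the paper shares in its own choice of $K$ in Proposition~\ref{cont}, is that packaging the affine bound $a\Gamma+b$ as $K'(1+\Gamma)$ with $K'=\max(a,b)$ tacitly uses $\Gamma(Q^\eta)\geq 0$ or should be stated with $|\Gamma|$; this does not affect how the estimate is used later.
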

To prove the existence of the minimizer probability measure, we need the following technical results which give an upper bound of the utility expectation and show the convexity of our criterion $\Gamma.$
\begin{Lemma} \label{lemma1}
For all $\gamma> 0$ and any $A \in \Fc_T $ we have:
\begin{equation}\label{ineq11}
\E_{Q^\eta} [|\Uc_{0,T}^\delta| \textbf{1}_A] \leq \frac {\gamma_0(Q^\eta)}{2\lambda\kappa_1} + \frac{T\kappa_2}{2\lambda\kappa_1} + \frac{e^{-1}} {\lambda} + \frac {e^{-1}} {\lambda} \E_P [\textbf{1}_A \exp (\lambda \alpha \int_{0}^{T} |U_s| ds+\lambda \bar{\alpha}|\bar{U}_T|)].
\end{equation}
\end{Lemma}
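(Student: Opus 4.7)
The plan is to imitate the strategy used for the $f$-divergence Lemma~\ref{lemma2}, but with $f(x)=x\log x$ so that the conjugate is exponential. First I would dominate the discounted utility by the undiscounted absolute version: since $0\le S^{\delta}\le 1$,
\[
|\Uc^{\delta}_{0,T}|\,\mathbf{1}_A\;\le\;R\,\mathbf{1}_A,\qquad R:=\alpha\int_0^T|U_s|\,ds+\bar{\alpha}|\bar{U}_T|,
\]
and pass from $Q^{\eta}$-expectation to $P$-expectation by Bayes: $\E_{Q^{\eta}}[|\Uc^{\delta}_{0,T}|\mathbf{1}_A]=\E_P[Z_T^{\eta}|\Uc^{\delta}_{0,T}|\mathbf{1}_A]\le \E_P[Z_T^{\eta}\,R\,\mathbf{1}_A]$.

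Next I would apply the Young-type inequality \reff{convexinequality} with $x=R\,\mathbf{1}_A$ and $y=Z_T^{\eta}$, yielding
\[
Z_T^{\eta}\,R\,\mathbf{1}_A\;\le\;\frac{1}{\lambda}\Bigl(Z_T^{\eta}\log Z_T^{\eta}+e^{-1}\exp(\lambda R\,\mathbf{1}_A)\Bigr).
\]
Taking $P$-expectation turns the first summand into $\tfrac{1}{\lambda}H(Q^{\eta}|P)$, which is controlled by $\gamma_0(Q^{\eta})$ via the entropy bound \reff{estimation entropy}: $H(Q^{\eta}|P)\le \tfrac{1}{2\kappa_1}\gamma_0(Q^{\eta})+\tfrac{T\kappa_2}{2\kappa_1}$. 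This already produces the first two summands on the right-hand side of the claimed inequality.

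The only point requiring a small trick is the exponential term: I need to rewrite
\[
\exp(\lambda R\,\mathbf{1}_A)\;=\;\mathbf{1}_A\exp(\lambda R)+\mathbf{1}_{A^c},
\]
so that its $P$-expectation is bounded above by $\E_P[\mathbf{1}_A\exp(\lambda R)]+1$. Plugging this back produces the remaining two terms $\tfrac{e^{-1}}{\lambda}$ and $\tfrac{e^{-1}}{\lambda}\E_P[\mathbf{1}_A\exp(\lambda\alpha\int_0^T|U_s|ds+\lambda\bar{\alpha}|\bar{U}_T|)]$, matching \reff{ineq11}. I expect no serious obstacle: the argument is an essentially one-line Young estimate, and the only subtlety is remembering that $\exp(\lambda R\,\mathbf{1}_A)$ does not equal $\mathbf{1}_A\exp(\lambda R)$, which is why the extra constant $\tfrac{e^{-1}}{\lambda}$ appears in the stated bound. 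Finiteness of the right-hand side under \textbf{(A1)-(A'2)} then follows from Remark~\ref{integrability of U}.
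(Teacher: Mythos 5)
Your proof is correct and follows essentially the same route as the paper: dominate $|\Uc^{\delta}_{0,T}|$ by $R$, apply the Young-type inequality \reff{convexinequality} with $f(x)=x\log x$, and control the entropy term via \reff{estimation entropy}. The only (immaterial) difference is where the additive constant $e^{-1}/\lambda$ arises: you put the indicator inside the exponent and use $\exp(\lambda R\mathbf{1}_A)\le \mathbf{1}_A\exp(\lambda R)+1$, whereas the paper multiplies the Young bound by $\mathbf{1}_A$ and then drops the indicator on the entropy term using $Z\ln Z\ge -e^{-1}$; both yield exactly \reff{ineq11}.
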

\begin{proof}
From Remark \ref{integrability of U}, we have $R\in L^{\exp}.$ Using the inequality (\ref{convexinequality}), we obtain
\begin{equation*}
\begin{split}
Z_T^\eta |\Uc_{0, T}^\delta| \textbf{1}_A & \leq Z_T^\eta (\alpha \int_{0}^{T} |U_s| ds + \bar{\alpha}|\bar{U}_T|) \textbf{1}_A
\\ & \leq \frac{1}{\lambda} [Z_T^\eta \ln (Z_T^\eta) + e^{-1}\exp (\lambda \alpha \int_{0}^{T} |U_s |ds + \lambda \bar{\alpha}|\bar{U}_T|)] \textbf{1}_A
\\ & \leq \frac{1}{\lambda} [Z_T^\eta \ln (Z_T^\eta) + e^{-1}] + \frac{e^{-1}}{\lambda}\textbf{1}_A \exp(\lambda\alpha \int_{0}^{T}|U_s|ds +\lambda\bar{\alpha}|\bar{U}_T|).
\end{split}
\end{equation*}
Taking the expectation with respect to $P$ and using inequality (\ref{ineqalityentropy}) we get
\begin{equation*}
\E_{Q^{\eta}} [|\Uc_{0, T}^\delta|\textbf{1}_A] \leq \frac{1}{2 \lambda\kappa_1} \gamma_0 (Q^\eta) + \frac{T\kappa_2} {2\lambda \kappa_1} + \frac{e^{-1}} {\lambda} + \frac {e^{-1}} {\lambda} \E_P [\textbf{1}_A \exp (\lambda\alpha\int_{0}^{T}|U_s|ds + \lambda \bar{\alpha}|\bar{U}_T|)],
\end{equation*}
and so inequality (\ref{ineq11}) is proved.
\ep
\end{proof}
\begin{Proposition}\label{convexity of Gamma}
The functional $Q^\eta \mapsto \Gamma (Q^\eta) $ is  convex.
\end{Proposition}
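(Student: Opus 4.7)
My plan is to decompose $\Gamma(Q^\eta) = \E_{Q^\eta}[\mathcal{U}_{0,T}^\delta] + \beta\,\E_{Q^\eta}[\mathcal{R}_{0,T}^\delta(Q^\eta)]$ and treat the two summands separately. The first term is trivially affine in $Q^\eta$: by Bayes' rule $\E_{Q^\eta}[\mathcal{U}_{0,T}^\delta] = \E_P[Z_T^\eta\,\mathcal{U}_{0,T}^\delta]$, and the map $Q^\eta \mapsto Z_T^\eta$ is linear, so no work is required there. All the content lies in the penalty term $\E_{Q^\eta}[\mathcal{R}_{0,T}^\delta(Q^\eta)]$, which depends nonlinearly on $Q^\eta$ through the Girsanov integrand $\eta$.

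The first simplification I would make is to rewrite $\mathcal{R}_{0,T}^\delta(Q^\eta)$ in a more convenient form using Fubini on the double integral $\int_0^T \delta_s S_s^\delta \bigl(\int_0^s h(\eta_u)\,du\bigr)ds$ together with the identity $\int_u^T \delta_s S_s^\delta\,ds = S_u^\delta - S_T^\delta$. This collapses the penalty to
\begin{equation*}
\mathcal{R}_{0,T}^\delta(Q^\eta) = \int_0^T S_u^\delta h(\eta_u)\,du,
\end{equation*}
so that after another application of Fubini (valid because $h\geq 0$) the penalty contribution to $\Gamma$ becomes $\beta\int_0^T S_u^\delta\,\E_P[Z_u^\eta h(\eta_u)]\,du$. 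Convexity of $\Gamma$ will follow if I can show that the integrand $Z_u^\eta h(\eta_u)$ is convex in $Q^\eta$ for each $u$.

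To do this, fix $Q^0, Q^1 \in \mathcal{Q}_f^c$ with densities $Z^0, Z^1$ and Girsanov integrands $\eta^0, \eta^1$, let $x\in[0,1]$ and set $Q^x := xQ^1 + (1-x)Q^0$, whose density is $Z^x := xZ^1 + (1-x)Z^0$. Applying Itô to $Z^x$ gives $dZ^x = \bigl(xZ^1\eta^1 + (1-x)Z^0\eta^0\bigr)dW$, so the Girsanov integrand associated with $Q^x$ is
\begin{equation*}
\eta^x = \frac{xZ^1\eta^1 + (1-x)Z^0\eta^0}{xZ^1 + (1-x)Z^0}.
\end{equation*}
Writing $\eta^x$ as the convex combination $\lambda\eta^1 + (1-\lambda)\eta^0$ with weights $\lambda := xZ^1/Z^x$ and $1-\lambda = (1-x)Z^0/Z^x$, the convexity of $h$ yields $h(\eta^x) \leq \lambda h(\eta^1) + (1-\lambda) h(\eta^0)$, and multiplying through by $Z^x$ eliminates the denominator to give the clean pointwise inequality
\begin{equation*}
Z_u^x h(\eta_u^x) \leq x Z_u^1 h(\eta_u^1) + (1-x) Z_u^0 h(\eta_u^0).
\end{equation*}

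Taking $\E_P$ on both sides, multiplying by $\beta S_u^\delta$, integrating over $u\in[0,T]$, and adding the affine utility contribution yields $\Gamma(Q^x) \leq x\Gamma(Q^1) + (1-x)\Gamma(Q^0)$, which is the claim. The only delicate point is justifying the Fubini interchanges and making sure each quantity is well-defined; this is covered by Proposition~\ref{prop3.1}, which guarantees finiteness of the relevant expectations for $Q^\eta \in \mathcal{Q}_f^c$, so the convex-combination inequality can be integrated without worrying about $\infty - \infty$ issues. The ``main obstacle'' is essentially the bookkeeping around the nonlinear dependence $\eta \leftrightarrow Q^\eta$, which is resolved cleanly by the weighted form of Jensen's inequality $Z^x h(\eta^x) \leq x Z^1 h(\eta^1) + (1-x) Z^0 h(\eta^0)$.
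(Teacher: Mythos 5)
Your proof is correct and follows essentially the same route as the paper: collapse the penalty to $\int_0^T S_u^\delta h(\eta_u)\,du$, identify the Girsanov integrand of the convex combination via It\^o's formula, and apply the weighted Jensen inequality $Z^x h(\eta^x)\leq xZ^1h(\eta^1)+(1-x)Z^0h(\eta^0)$. The only cosmetic difference is that the paper inserts the indicator $\mathbf{1}_{\{Z^x_u>0\}}$ in the definition of $\eta^x$ to handle the set where the combined density vanishes, a point your argument glosses over but which is harmless since that set contributes nothing to the expectation.
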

\begin{proof}
The product derivatives formula gives
\begin{equation*}
\frac{d}{ds} (S_s^\delta (\int_0^sh(\eta_u)du)) =-\delta_sS_s^\delta\int_0^sh(\eta_u)du + S_sh(\eta_s).
\end{equation*}
Integrating between $0$ and $T$ we get:
 \begin{equation}\label{ineq12}\dint_0^T \delta_sS_s^\delta (\int_0^sh(\eta_u)du)ds + S_T^\delta \int_0^T h(\eta_u)du = \int_0^T S_s^\delta h(\eta_s)ds. \end{equation}
Fix $\lambda \in(0,1)$ and $ Q^\eta $ and $ Q^{\eta'} $ two distinct elements of $\Qc_f.$
\\ Let $ Q = \lambda Q^\eta + (1-\lambda) Q^{\eta'} $ and $L_t=\E_P[\frac{dQ}{dP}|\Fc_t].$
Using It$\hat{\textrm{o}}$'s formula, we get $ L_t=\Ec(q.W)_t $ where $(q_t)_{0\leq t \leq T}$ is defined by
 $$q_t=\frac{\lambda \eta L_t^\eta+(1-\lambda)\eta'L_t^{\eta'}}{\lambda L_t^\eta+(1-\lambda)L_t^{\eta'}}\textbf{1}_{\{\lambda L_t^\eta+(1-\lambda)L_t^{\eta'}>0\}}\;\; dt\otimes dP\;\textrm{a.e.}\;\;\; t\in [0,T].$$
\\From the definition of the penalty term in $\Gamma$, we have
\begin{equation*}
\begin{split}
\Rc_{0,T}(Q)&=\E_Q\Big[\dint_0^T\delta_sS_s^\delta (\int_0^s h(q_u)du)ds+S_T^\delta\int_0^T h(q_u)du \Big]
\\&=\E_Q\Big[\dint_0^TS_s^\delta h(\frac{\lambda \eta L_s^\eta+(1-\lambda)\eta'L_s^{\eta'}}{\lambda L_s^\eta+(1-\lambda)L_s^{\eta'}}\textbf{1}_{\{\lambda L_s^\eta+(1-\lambda)L_s^{\eta'}>0\}})ds\Big]
\\&= \E_Q\Big[\dint_0^TS_s^\delta h(\frac{\lambda \eta L_s^\eta+(1-\lambda)\eta'L_s^{\eta'}}{\lambda L_s^\eta+(1-\lambda)L_s^{\eta'}})\textbf{1}_{\{\lambda L_s^\eta+(1-\lambda)L_s^{\eta'}>0\}}ds\Big],
\end{split}
\end{equation*}
where the second equality is deduced from $(\ref{ineq12}), $ and the last equality holds because $h(0)=0.$
The convexity of $h$ implies
\begin{equation*}
\begin{split}
&\E_Q\Big[\dint_0^T\delta_sS_s^\delta (\int_0^s h(q_u)du)ds+S_T^\delta\int_0^T h(q_u)du\Big]
\\& \leq \E_Q\Big[\dint_0^TS_s^\delta (\frac{\lambda L^\eta_s}{\lambda L_s^\eta+(1-\lambda)L_s^{\eta'}} h(\eta_s)+\frac{(1-\lambda) L_s^{\eta'}}{\lambda L_s^\eta+(1-\lambda)L_s^{\eta'}} h(\eta'_s))\textbf{1}_{\{\lambda L_s^\eta+(1-\lambda)L_s^{\eta'}>0\}}ds\Big]
\\&=\E_P\Big[\dint_0^T (\lambda L_s^\eta+(1-\lambda)L_s^{\eta'}) S_s^\delta (\frac{\lambda L^\eta_s}{\lambda L_s^\eta+(1-\lambda)L_s^{\eta'}} h(\eta_s)\\ & +\frac{(1-\lambda) L_s^{\eta'}}{\lambda L_s^\eta+(1-\lambda)L_s^{\eta'}} h(\eta'_s))\textbf{1}_{\{\lambda L_s^\eta+(1-\lambda)L_s^{\eta'}>0\}} ds\Big]
\\&=\lambda \E_{Q^\eta}\Big[\dint_0^T S_s^\delta h(\eta_s)\textbf{1}_{\{\lambda L_s^\eta+(1-\lambda)L_s^{\eta'}>0\}}ds]  +(1-\lambda) \E_{Q^{\eta'}}[\dint_0^TS_s^\delta  h(\eta'_s)\textbf{1}_{\{\lambda L_s^\eta+(1-\lambda)L_s^{\eta'}>0\}}ds\Big].
\end{split}
\end{equation*}
Since we have  $\E_Q[\Uc_{0, T}] = \lambda \E_ {Q^\eta} [\Uc_{0, T}]+ (1-\lambda) \E_{Q^{\eta'}} [\Uc_{0, T}], $ we deduce that
 $$ \Gamma (Q) \leq \lambda \Gamma (Q^\eta) + (1-\lambda) \Gamma (Q^{\eta'}).$$
\ep
\end{proof}
The following theorem states the existence of a  probability measure solution of the optimization problem (\ref{min}).
\begin{Theorem} \label{exisun}
Assume that \textbf{(A1)-(A'2)} are satisfied. Then there is a probability measure $Q^{\eta^*}\in\Qc_f^c$ minimizing
$ Q^\eta \mapsto\Gamma(Q^\eta) $  over all $ Q^\eta \in \Qc_f^c.$
\end{Theorem}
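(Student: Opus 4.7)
The plan is to mirror the scheme used for the $f$-divergence case (Theorem 2.x) but with the Shannon function $x\log x$ playing the role of $f$ via Corollary \ref{controlentropy}, and to exploit that the new penalty integrand $h$ is nonnegative. Let $(Q^{\eta^n})_{n\in\N}$ be a minimizing sequence in $\Qc_f^c$; after relabeling, $\Gamma(Q^{\eta^n})$ decreases to $\inf_{\Qc_f^c}\Gamma$, which is finite by Proposition \ref{cont}. Set $Z^n := Z^{Q^{\eta^n}}$. Koml\'os' theorem furnishes $\bar Z^n_T \in \operatorname{conv}(Z^n_T, Z^{n+1}_T, \ldots)$ converging $P$-a.s. to a nonnegative random variable $\bar Z^\infty_T$. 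Since $\Qc_f^c$ is convex, each $\bar Z^n_T$ is the terminal density of some $\bar Q^n\in\Qc_f^c$, and by convexity of $\Gamma$ (Proposition \ref{convexity of Gamma}) we have $\Gamma(\bar Q^n)\leq \Gamma(Q^{\eta^n})\leq \Gamma(Q^{\eta^1})$.

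Next I would identify $\bar Q^\infty := \bar Z^\infty_T\cdot P$ as an element of $\Qc_f^c$. Corollary \ref{controlentropy} applied to $\bar Q^n$ yields $\sup_n H(\bar Q^n|P) <+\infty$; by de la Vall\'ee-Poussin's criterion with the test function $x\mapsto x\log x$, the family $(\bar Z^n_T)_n$ is $P$-uniformly integrable, hence converges in $L^1(P)$ to $\bar Z^\infty_T$. This gives $\E_P[\bar Z^\infty_T]=1$, and the same weak $L^1$-argument as in the $f$-divergence proof shows $\bar Q^\infty = P$ on $\Fc_0$. By Doob's maximal inequality, along a further subsequence $\sup_{t\leq T}|\bar Z^n_t - \bar Z^\infty_t|\to 0$ $P$-a.s. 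Finiteness of $\gamma_0(\bar Q^\infty)$ will follow from the lower semicontinuity step below (after verifying $\liminf_n \gamma_0(\bar Q^n)<\infty$), so that $\bar Q^\infty\in\Qc_f^c$.

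The core step is to establish
\[
\Gamma(\bar Q^\infty)\leq \liminf_{n\to\infty}\Gamma(\bar Q^n),
\]
which combined with $\Gamma(\bar Q^n)\leq \Gamma(Q^{\eta^n})$ identifies $Q^{\eta^*}:=\bar Q^\infty$ as optimal. I split $\Gamma(\bar Q^n) = \E_P[\bar Y^n_1]+\E_P[\bar Y^n_2]$ with $\bar Y^n_1 := \bar Z^n_T\,\Uc_{0,T}^\delta$ and, via identity (\ref{ineq12}), $\bar Y^n_2 := \beta\,\bar Z^n_T \int_0^T S_s^\delta h(\bar \eta^n_s)\,ds$. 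The utility term is handled exactly as in Theorem 2.x: truncate $\Uc_{0,T}^\delta$ at level $-m$, apply Fatou to the truncated piece and use Lemma \ref{lemma1} (replacing Lemma \ref{lemma2}) together with the exponential integrability from Assumption (\textbf{A'2}) and Remark \ref{integrability of U} to show that the tail
\[
\sup_{n\in\N\cup\{\infty\}}\E_P\!\left[\bar Z^n_T|\Uc_{0,T}^\delta|\,\indi_{\{\Uc_{0,T}^\delta<-m\}}\right]\longrightarrow 0\quad\text{as }m\to\infty,
\]
which yields $\E_P[\bar Y^\infty_1]\leq \liminf_n \E_P[\bar Y^n_1]$.

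The main obstacle is lower semicontinuity of the penalty term $\E_P[\bar Y^n_2]$, because the drift $\bar\eta^n$ is extracted from $\bar Z^n$ through the martingale representation and is not directly known to converge. The key inputs are: $h\geq 0$ is convex and lower semicontinuous; the functional $Q^\eta\mapsto \E_{Q^\eta}\!\left[\int_0^T S_s^\delta h(\eta_s)\,ds\right]$ is convex (shown exactly as in Proposition \ref{convexity of Gamma}); and $\sup_n \E_{\bar Q^n}[\int_0^T|\bar\eta^n_s|^2\,ds]$ is finite by the quadratic lower bound on $h$ together with Proposition \ref{cont}. These ingredients, via a Koml\'os/Mazur-type argument on $(\bar\eta^n)$ and Fatou's lemma applied to the nonnegative integrand $\bar Z^n_T\int_0^T S_s^\delta h(\bar\eta^n_s)\,ds$, give $\E_P[\bar Y^\infty_2]\leq \liminf_n\E_P[\bar Y^n_2]$. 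Combining the two estimates yields $\Gamma(\bar Q^\infty)\leq \liminf_n\Gamma(\bar Q^n)\leq \inf_{Q^\eta\in\Qc_f^c}\Gamma(Q^\eta)$, so that $Q^{\eta^*}=\bar Q^\infty$ is the sought optimal measure.
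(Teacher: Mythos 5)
Your overall architecture coincides with the paper's: Koml\'os on the terminal densities, uniform integrability via Corollary \ref{controlentropy} and de la Vall\'ee-Poussin, Doob's maximal inequality to get uniform a.s.\ convergence of the density processes along a subsequence, the truncation argument with Lemma \ref{lemma1} for the utility term, and Fatou for the penalty term. The one place where you diverge is precisely the step you yourself flag as the main obstacle, and there your proposal has a genuine gap. You correctly observe that $\bar\eta^n$ is only known implicitly through the martingale representation of $\bar Z^n$, but the resolution you sketch --- ``a Koml\'os/Mazur-type argument on $(\bar\eta^n)$'' combined with convexity of the penalty functional --- does not close it. First, the map $\eta\mapsto Q^\eta$ is not affine, so forming further convex combinations of the drifts $\bar\eta^n$ does not correspond to forming convex combinations of the measures $\bar Q^n$; a Koml\'os limit of the $\bar\eta^n$ would therefore not be identified with $\bar\eta^\infty$, the drift of the already-fixed limit density $\bar Z^\infty$, and it is $\gamma_0(\bar Q^\infty)=\E_P[\bar Z^\infty_T\int_0^T h(\bar\eta^\infty_u)\,du]$ that you must control. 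Second, the a priori bound you invoke, $\sup_n\E_{\bar Q^n}[\int_0^T|\bar\eta^n_s|^2ds]<\infty$, is an $L^2$ bound under the \emph{varying} measures $\bar Q^n$, not in a fixed space $L^2(dt\otimes dP)$, so there is no single Banach space in which Koml\'os or Mazur can be applied to $(\bar\eta^n)$ without further work (e.g.\ on sets where the densities are bounded away from zero).

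The paper closes this gap differently: it shows via Burkholder--Davis--Gundy and a localization by the stopping times $\tau_n=\inf\{t: \sup_{s\le t}|\bar Z^\infty_s-\bar Z^n_s|\ge 1\}$ that $\langle \bar Z^\infty-\bar Z^n\rangle_T=\int_0^T(\bar Z^\infty_u\bar\eta^\infty_u-\bar Z^n_u\bar\eta^n_u)^2du\to 0$ in probability, whence $\bar Z^n\bar\eta^n\to\bar Z^\infty\bar\eta^\infty$ and then $\bar\eta^n\to\bar\eta^\infty$ in $dt\otimes dP$-measure; after that, Fatou's lemma applied to the nonnegative, lower semicontinuous integrand $\bar Z^n_T\int_0^T h(\bar\eta^n_u)\,du$ gives both $\gamma_0(\bar Q^\infty)<\infty$ and the lower semicontinuity of the penalty term in one stroke. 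You should replace your Koml\'os/Mazur step with this quadratic-variation argument (or an equivalent device that actually identifies the limit of the drifts as $\bar\eta^\infty$); everything else in your proposal is sound and matches the paper.
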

\begin{proof}
\begin{enumerate}
%\item From proposition (\ref{convexity of Gamma}), the function $Q^\eta \mapsto \Gamma (Q^\eta)$ is strictly convex, therefore, $Q^{\eta^*}$ must be unique if it exists.
\item Let $(Q^{\eta_n})_{n \in \N}$ be a minimizing sequence of $\Qc_f^c$  i.e.
$$ \searrow\lim\limits_{n \to +\infty} \Gamma (Q^{\eta_n}) = \inf \limits_{Q^\eta\in\Qc^c_f} \Gamma(Q^\eta).$$
We denote by $ Z^n:=Z^{Q^{\eta_n}} = \Ec(\int\eta_ndW) $ the  corresponding density processes.
\\Since each $ Z_T^n \geq 0 $, it follows from  Koml$\acute {\textrm{o}}$s' lemma that there is a sequence $ (\bar{Z}_T ^ n) _ {n \in \N} $ such that $ \bar{Z}_T^n \in conv(Z^n_T, Z ^ {n+1} _T, ...)$ for all $ n \in \N $ and $ (\bar{Z}^n_t) $ converges $P$-a.s to a random variable $\bar{Z}^{\infty}_T $.
\\ $ \bar{Z}^{\infty}_T $ is positive but may be infinite.
As $ \Qc_f$ is convex, each $\bar {Z}_T^n$ is associated with a probability measure $ \bar{Q}^n \in \Qc_f.$
This also holds for $ \bar {Z}^{\infty}_T $ i.e.  that $ d\bar{Q}^{\infty}:=\bar{Z}^{\infty}dP $ defines a probability measure $ \bar{Q}^{\infty} \in \Qc_f.$
Indeed, we have first:
\begin{equation}\label{ineq13}
\Gamma (\bar{Q}^n) \leq \sup \limits_{m\geq n} \Gamma (Q^{\eta_m}) = \Gamma(Q^{\eta_n}) \leq \Gamma (Q^{\eta_1}),
\end{equation}
where the first inequality holds since $Q^\eta\mapsto \Gamma(Q^\eta)$ is convex and $ n \mapsto \Gamma(Q^{\eta_n}) $ is decreasing,  and the second inequality follows from the monotonicity property of $(\Gamma(Q^{\eta_n}))_n$. Therefore Corollary \ref{controlentropy} gives
\begin{equation} \label{c2} \sup\limits_{n \in \N} \E_P [\bar{Z}^n\ln(\bar{Z}^n)] = \sup\limits_{n\in\N} H(Q^n|P) \leq K'(1+\sup \limits_{n\in\N}\Gamma (\bar{Q}^n)) \leq K'(1+ \Gamma (Q^{\eta_1})).
\end{equation}
Thus $ (\bar{Z}^n_T)_{n\in\N} $ is $ P $-uniformly integrable by Vall\'ee-Poussin's criterion and converges in $ L^1(P)$. This implies that $\E_P[\bar{Z}^{\infty}_T]=\lim \limits_{n \rightarrow +\infty} E_P[\bar{Z}^n_T] = 1 $ so that $ Q^{\infty} $ be a probability measure and $ Q^{\infty} \ll P$ on  $\Fc_T.$ Let define the martingale $Z_t^\infty:=E_P[Z_T^\infty|\Fc_t]$, so there exists a progressively measurable process $(\eta_t^\infty)_t$  valued in $\R^d$ satisfying $\int_0^T\Vert\eta_t^\infty\Vert^2 dt <+\infty\;\; P.a.s$  and $Z_t^\infty=\Ec(\int_0^t \eta_s^\infty dW_s)$ . Similarly, for $n\in \mathbb{N}$, there exists a progressively measurable process $(\bar\eta^n_t)_t$  valued in $\R^d$ satisfying $\int_0^T\Vert\bar\eta_t^n\Vert^2 dt <+\infty\;\; P.a.s$ and $\bar Z^n_t=\Ec(\int_0^t \bar\eta_s^n dW_s).$
\item We now want to show that $\bar{Q}^\infty \in \Qc_f^c.$ Let $ \bar{Z}^\infty $ be the density process of  $ \bar{Q}^\infty$ with respect to $P$.
Since we know that $ (\bar{Z}^n_T) $ converges to $ \bar{Z}^\infty $ in $ L^1(P),$ the maximal Doob's inequality
$$P[\sup\limits_{0\leq t \leq T} \mid \bar{Z}^\infty_t-\bar{Z}^n_t \mid\geq \epsilon]\leq \frac{1}{\epsilon} \E_P[\mid \bar{Z}^\infty_T-\bar{Z}^n_T \mid]$$
implies that  $(\sup\limits_{0\leq t \leq T} \mid \bar{Z}^\infty_t-\bar{Z}^n_t \mid)_{n\in\N}$
converges to $0$ in $P$-probability.
Going to a subsequence, still denoted by $(\bar{Z}^n)_{n\in \N}$, we can assume that $(\sup\limits_{0\leq t \leq T} \mid \bar{Z}^\infty_t-\bar{Z}^n_t \mid)_{n\in\N}$ converges to $0$  $P$-a.s. By Burkholder-Davis-Gundy's inequality there is a constant $C$ such that
$$E[\langle \bar{Z}^\infty-\bar{Z}^n \rangle_T^\frac{1}{2}]\leq C E[\sup\limits_{0\leq t \leq T} \mid \bar{Z}^\infty_t-\bar{Z}^n_t \mid].$$
\\Let $M_t^n:=\sup\limits_{0\leq s \leq t} \mid \bar{Z}^\infty_s-\bar{Z}^n_s \mid $ and $(\tau_n)$ a sequence of stopping time defined by
\begin{equation*}
\tau_n=\left\{\begin{array}{ccc}
         \inf\{t\in [0,T[; M^n_t\geq 1\} & \textrm{if}  & \{t\in [0,T[; M^n_t \geq 1\}\neq \emptyset \\
         T &  & \textrm{otherwise}
       \end{array}\right..
\end{equation*}
Since $M_{\tau_n}^n$ is bounded by $M_T^n \wedge 1$ then $M_{\tau_n}^n$ converges almost surely to $0$ and by the dominated convergence theorem converges to $0$ in $L^1(P).$
Then, using Burkholder-Davis-Gundy's inequality $\langle \bar{Z}^\infty-\bar{Z}^n \rangle_{\tau_n}^\frac{1}{2}$ converges to $0$ in $L^1(P)$ and a fortiori in probability.
\\ As  $\langle \bar{Z}^\infty-\bar{Z}^n \rangle_T= \langle \bar{Z}^\infty-\bar{Z}^n \rangle_{\tau_n}\textbf{1}_{\{\tau_n=T\}}+\langle \bar{Z}^\infty-\bar{Z}^n \rangle_T\textbf{1}_{\{\tau_n<T\}}$, then for all $\varepsilon>0,$
\begin{equation*}
\begin{split}
P(\langle \bar{Z}^\infty-\bar{Z}^n \rangle_T\geq \varepsilon) & \leq P(\langle \bar{Z}^\infty-\bar{Z}^n \rangle_{\tau_n}\textbf{1}_{\{\tau_n=T\}}\geq \varepsilon)+ P(\langle \bar{Z}^\infty-\bar{Z}^n \rangle_T\textbf{1}_{\{\tau_n<T\}}\geq \varepsilon)
\\ & \leq P(\langle \bar{Z}^\infty-\bar{Z}^n \rangle_{\tau_n}\geq \varepsilon)+P(\tau_n<T)
\end{split}
\end{equation*}
From the convergence in probability of $(\langle \bar{Z}^\infty-\bar{Z}^n \rangle_{\tau_n})_n$, we have $\lim\limits_{n\rightarrow +\infty}P(\langle \bar{Z}^\infty-\bar{Z}^n \rangle_{\tau_n}\geq \varepsilon)=0.$
Since $M^n$  is a nondecreasing process, we have
$$P(\tau_n<T)=P(\{\exists t\in[0,T[\;\; s.t\;\; M^n_t\geq 1\}) \leq P(\{ M^n_T\geq 1\}).$$
Since $M_T^n$ converges in probability to $0$, we have $P(\{ M^n_T\geq 1\}) \underset{n\rightarrow +\infty}{\longrightarrow} 0$. Then $\lim\limits_{n\rightarrow +\infty}P(\tau_n<T)=0,$
and consequently $\lim\limits_{n\rightarrow +\infty}P(\langle \bar{Z}^\infty-\bar{Z}^n \rangle_T\geq \varepsilon)=0$ .ie $(\langle \bar{Z}^\infty-\bar{Z}^n \rangle_T)_n $ converges in probability to $0.$
We can extract a subsequence also  denoted  by $\bar{Z}^n$ such that  $(\langle \bar{Z}^\infty-\bar{Z}^n \rangle_T)_n $ converges  almost surely to $0.$
\\On the other hand, we have $$\langle \bar{Z}^\infty-\bar{Z}^n \rangle_T=\dint_0^T(\bar{Z}_u^\infty\bar{\eta}_u^\infty-\bar{Z}^n_u\bar{\eta}^n_u)^2du .$$
It follows that processes $\bar{Z}^n\bar{\eta}^n$ converge in $dt\otimes dP$-measure to process $\bar{Z}^\infty\bar{\eta}^\infty.$ Since $\bar{Z}^n\longrightarrow \bar{Z}^\infty dt\otimes dP$-a.e, we have $\bar{\eta}^n$ converges in $dt\otimes dP$-measure to $\bar{\eta}^\infty.$
 Fatou's lemma and inequality (\ref {c2}) give:
\begin{equation}\label{c3}
 \gamma_0(\bar{Q}^\infty) = \mathbb{E}_P [ \bar{Z}^\infty_T\int_0^T h(\bar{\eta}_u^\infty)du] \leq \liminf\limits_{n\rightarrow +\infty} \E_\mathbb{P} [Z^n_T\int_0^T h(\eta_u^n) du] < +\infty.
\end{equation}
This shows that $\bar {Q}^{\infty} \in \Qc_f.$
\\Now we will show that the probability $\bar {Q}^{\infty}$ is optimal.
\\For $n\in\N \cup \{+\infty\}$, let $\bar{Y}_1^n:=\bar {Z}^n_T \Uc^\delta_{0,T}$ and $ \bar{Y}_2^n:=\beta \Rc^\delta_{0,T}(\bar{Q}^n))$
 then
$\lim\limits_{ n\rightarrow +\infty} \bar{Y}^n_i = \bar{Y}^\infty_i \;P-$a.s for $i = 1,2.$
As $\bar{Y}^n_2 $ is bounded from below, uniformly in $n$ and $\omega$,  Fatou's lemma yields:
\begin{equation}\label{c4}
\E_\mathbb{P}[\bar{Y}^\infty_2]\leq \liminf\limits_{n\rightarrow \infty}\E_\mathbb{P}[\bar{Y}^n_2].
\end{equation}
Adopting the same approach as in Theorem \ref{exisun} we show that:
\begin{equation}\label{c5}
\E_\mathbb{P}[\bar{Y}^\infty_1]\leq \liminf\limits_{n\rightarrow \infty}\E_\mathbb{P}[\bar{Y}^n_1].
\end{equation}
Inequality(\ref{c4}), (\ref{c5}) and (\ref {c2}) provide that:
$$\Gamma(\bar{Q}^\infty) =\E_\mathbb{P} [ \bar{Y}^\infty_1+\bar{Y}^\infty_2 ] \leq \liminf\limits_{n\rightarrow \infty}\Gamma(\bar{Q}^n)
\leq \liminf\limits_{n\rightarrow \infty}\Gamma(Q^n)\leq \inf\limits_{Q\in \Qc_f}\Gamma(Q).$$
This proves that $\bar{Q}^\infty $ is indeed optimal.
\end{enumerate}
\ep
\end{proof}
\subsection{BSDE description for the dynamic value process}
In this section,  stochastic control techniques  are employed to study the dynamics of the  value process denoted by $V$ associated with the optimization problem (\ref{min}). It is proved that $V$ is the unique solution of a quadratic backward stochastic differential equation. This extends the work of Skiadas \cite{SK03}, Schroder and Skiadas \cite{SS03}.
\\We first introduce some notations that we use below.
Denote by $\Sc$ the set of all $\Fc$ stopping time $\tau $ with values in $ [0,T] $, $\Dc^c $ the space of all processes $\eta $ with $ Q^\eta \in \Qc_f^c$ and $\Dc^{c,e} $ the space of all processes $\eta $ with $ Q^\eta \in \Qc_f^{c,e}$. We define:
$$ \mathcal{D}^c(\eta, \tau): = \{\eta '\in \Dc^c, Q^\eta = Q^{\eta'} \, \textrm {on} \, [0,\tau] \} $$
$$ \Gamma (\tau,Q^\eta):= \mathbb{E}_{Q^{\eta}} [c (.,Q^\eta)| \Fc_\tau]. $$
We note that $\Gamma (0,Q^\eta)$ and $\Gamma (Q^\eta)$ coincide.
The minimal conditional cost at time $\tau$ is defined by
$$ J(\tau,Q^\eta): = Q^\eta-\underset{\eta'\in \Dc^c(\eta,\tau)}{\essinf} \Gamma (\tau,Q^{\eta'}). $$
Then the problem (\ref{min}) can be written as follows:
\begin{equation}
 \textrm{give} \inf\limits_{Q^\eta\in\Qc_f^c}\Gamma(Q^\eta)=\inf \limits_{Q^\eta\in\Qc_f^c}\E_{Q^\eta}[c(.,Q^\eta)]= \E_P[J(0,Q^\eta)].
 \end{equation}
Where the second equality is deduced since the  dynamic programming principle holds and we have $Q^\eta=P$ on $\Fc_0$ for all $Q^\eta \in\Qc_f^c.$
\\The following  martingale optimality principle is a direct consequence of Theorems 1.15, 1.17 and 1.21 in El Karoui\cite{ELK81}. For the sake of completeness, the proof is given in the Appendix.
\begin{Proposition} \label{martingaleoptimality}
\begin{itemize}
\item[(1)]The family $ \{J (\tau,Q^\eta) | \tau \in \mathcal{S}, Q^\eta \in Q_f^c\} $ is a submartingale system.
\item[(2)] $ Q^{\eta^*} \in \Qc_f^c$  is optimal $\Leftrightarrow $ $ \{J (\tau, Q^{\eta^*}) | \tau \in \Sc \}$ is a martingale system .
\item[(3)] For any $Q^{\eta} \in \Qc_f^c$  there is an RCLL adapted process
$(J^\eta_t)_{0 \leq t \leq T}$ which is a $Q^\eta$- martingale  $J_{\tau}^\eta = J(\tau,Q^\eta)$.
\end{itemize}
\end{Proposition}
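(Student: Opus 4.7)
The plan is to verify the hypotheses of El Karoui's general optimality results~\cite{ELK81} (Theorems 1.15, 1.17, 1.21), from which the three assertions follow as direct consequences. Concretely, one must check (a) the integrability condition $J(\tau,Q^\eta)\in L^1(Q^\eta)$ for every $\tau\in\Sc$ and $Q^\eta\in\Qc_f^c$, (b) that the set of densities $\Dc^c$ is compatible with $\F$ and stable under bifurcation at stopping times, and (c) that the cost functional $c(\cdot,Q^\eta)$ is coherent, i.e.\ behaves consistently under pasting.

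For (a), the first step is to derive a conditional version of Proposition~\ref{prop3.1}: running Bayes' rule at $\tau$ and combining the argument of Proposition~\ref{cont} with Lemma~\ref{lemma1} applied under the regular conditional law $Q^\eta[\cdot\mid\Fc_\tau]$ produces a $Q^\eta$-integrable bound on $\Gamma(\tau,Q^{\eta'})$, obtained by the same duality trick used to prove Proposition~\ref{cont}, with $\lambda$ chosen large enough to absorb the penalty. Taking the essential infimum preserves $L^1(Q^\eta)$-membership, so $J(\tau,Q^\eta)\in L^1(Q^\eta)$ and in particular the conditional essential infimum is well-defined.

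For (b), given $\eta,\eta'\in\Dc^c(\eta^0,\tau)$ and $A\in\Fc_\tau$, define the pasted control
\begin{equation*}
\tilde\eta_t:=\eta_t\,\1_{\{t\leq\tau\}}+\bigl(\eta_t\,\1_A+\eta'_t\,\1_{A^c}\bigr)\1_{\{t>\tau\}}.
\end{equation*}
Predictability of $\tilde\eta$ is immediate, and using the exponential formula together with $\Ec(\int\eta\,dW)_\tau=\Ec(\int\eta'\,dW)_\tau=Z^{\eta^0}_\tau$ on $\Fc_\tau$, one checks that $Z^{\tilde\eta}=\Ec(\int\tilde\eta\,dW)$ is a positive $P$-martingale with $Z^{\tilde\eta}_\tau=Z^{\eta^0}_\tau$, so $Q^{\tilde\eta}=Q^{\eta^0}$ on $\Fc_\tau$. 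Convexity of $h$ with $h(0)=0$ yields $\gamma_0(Q^{\tilde\eta})\leq\gamma_0(Q^\eta)+\gamma_0(Q^{\eta'})<\infty$, hence $\tilde\eta\in\Dc^c(\eta^0,\tau)$. Compatibility of $\Dc^c$ with $\F$ is automatic, as every control is $\F$-predictable.

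For (c), the identity \reff{ineq12} rewrites the penalty of $c(\cdot,Q^{\tilde\eta})$ as $\beta\int_0^T S_s^\delta h(\tilde\eta_s)\,ds$, which splits additively between $[0,\tau]$ and $(\tau,T]$; on $A$ the second piece depends only on $\eta$, on $A^c$ only on $\eta'$. Since $\Uc_{0,T}^\delta$ is independent of $\eta$, this yields
\begin{equation*}
\1_A\,\E_{Q^{\tilde\eta}}[c(\cdot,Q^{\tilde\eta})\mid\Fc_\tau]=\1_A\,\E_{Q^\eta}[c(\cdot,Q^\eta)\mid\Fc_\tau],
\end{equation*}
which is exactly El Karoui's coherence property and also implies that the family $\{\Gamma(\tau,Q^{\eta'}) : \eta'\in\Dc^c(\eta,\tau)\}$ is directed downward, so the essential infimum defining $J(\tau,Q^\eta)$ can be realized as a decreasing limit. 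Granted (a)--(c), Theorems~1.15 and 1.17 of~\cite{ELK81} give the submartingale property (1) and the martingale characterisation of optimality (2), while the RCLL aggregation result (Theorem~1.21 of~\cite{ELK81}) produces the process $(J_t^\eta)_{0\leq t\leq T}$ satisfying $J_\tau^\eta=J(\tau,Q^\eta)$, which is (3). The main obstacle is (a): one has to carry the lower bound of Proposition~\ref{cont} through conditionally on $\Fc_\tau$ with estimates uniform in $\tau$ and $\eta'\in\Dc^c(\eta,\tau)$. This relies crucially on assumption~\textbf{(A'2)} together with Remark~\ref{integrability of U}, which provide the required $L^{\exp}$-integrability of $\alpha\int_0^T|U_s|\,ds+\bar\alpha|\bar U_T|$ under $P$ so that the conditional analogue of Lemma~\ref{lemma1} is genuinely finite.
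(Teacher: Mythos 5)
Your proposal follows essentially the same route as the paper: it reduces the three assertions to El Karoui's Theorems 1.15, 1.17 and 1.21 and verifies exactly the conditions the paper checks in its appendix, namely stability of $\Dc^c$ under bifurcation via the pasted control, coherence of the cost functional through the identity \reff{ineq12}, and the $L^1(Q^\eta)$-integrability of $J(\tau,Q^\eta)$ obtained from a conditional version of the lower bound of Proposition \ref{cont} with a choice of $\lambda$ making the bound uniform in $\eta'\in\Dc^c(\eta,\tau)$. The details you flag as the main obstacle (the uniform conditional lower bound, relying on \textbf{(A'2)} and Remark \ref{integrability of U}) are precisely what the paper's corresponding lemma establishes, so the argument is correct and matches the paper's.
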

In order to characterize the value process  in terms of BSDE we need the following proposition.
\begin{Proposition}\label{infQeq}
Under \textbf{(A1)-(A'2)}, we have
\begin{equation*}
\inf\limits_{Q^\eta\in \Qc_f^c}\Gamma(Q^\eta)=\inf\limits_{Q^\eta\in \Qc_f^{c,e}}\Gamma(Q^\eta).
\end{equation*}
\end{Proposition}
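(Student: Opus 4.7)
The inclusion $\Qc_f^{c,e} \subseteq \Qc_f^c$ immediately gives $\inf_{Q^\eta \in \Qc_f^c}\Gamma(Q^\eta) \leq \inf_{Q^\eta \in \Qc_f^{c,e}}\Gamma(Q^\eta)$, so only the reverse inequality requires argument. My plan is a convex‐combination approximation with the reference measure $P$.

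First I would observe that $P \in \Qc_f^{c,e}$. Indeed, the density of $P$ with respect to itself is constantly $1$, which corresponds to the drift $\eta \equiv 0$, and since $h(0)=0$ we get $\gamma_0(P)=0$. In particular, under \textbf{(A1)-(A'2)},
$$\Gamma(P) = \E_P[\Uc^\delta_{0,T}]$$
is finite, because $U \in D_1^{\exp}$ and $\bar U_T \in L^{\exp}$ are in particular $P$-integrable.

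Next, fix an arbitrary $Q^\eta \in \Qc_f^c$ and, for $\lambda \in (0,1)$, set
$$Q_\lambda := \lambda P + (1-\lambda)\, Q^\eta.$$
Its density with respect to $P$ is $\lambda + (1-\lambda)Z_T^\eta$, which is strictly positive $P$-a.s., so $Q_\lambda \sim P$. As in the proof of Proposition \ref{convexity of Gamma}, $Q_\lambda$ is associated with a well-defined drift $\eta^\lambda$ via the predictable-projection formula, and applying the convexity of $\Gamma$ established there yields
$$\Gamma(Q_\lambda) \leq \lambda \Gamma(P) + (1-\lambda)\Gamma(Q^\eta) < +\infty.$$
In particular $\gamma_0(Q_\lambda) < +\infty$, so $Q_\lambda \in \Qc_f^c$ and, combined with equivalence, $Q_\lambda \in \Qc_f^{c,e}$. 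Taking the infimum on the left and letting $\lambda \to 0^+$ gives
$$\inf_{Q^\eta \in \Qc_f^{c,e}} \Gamma(Q^\eta) \;\leq\; \Gamma(Q^\eta),$$
and taking the infimum over $Q^\eta \in \Qc_f^c$ completes the proof.

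I do not anticipate real obstacles here: all the heavy lifting (convexity of $\Gamma$ and the representation of an arbitrary mixture in terms of a predictable drift) has already been carried out in Proposition \ref{convexity of Gamma}, and $P$ lies in $\Qc_f^{c,e}$ with finite cost. The only point that needs to be checked carefully is that one may legitimately apply the convexity bound to the pair $(P, Q^\eta)$, i.e. that both $\Gamma(P)$ and $\Gamma(Q^\eta)$ are finite; this follows respectively from \textbf{(A'2)} and from $Q^\eta \in \Qc_f^c$ together with Proposition \ref{prop3.1}.
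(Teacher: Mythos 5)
Your proof is correct and follows essentially the same route as the paper: both arguments form the convex combination $\lambda P+(1-\lambda)Q^\eta$, note it lies in $\Qc_f^{c,e}$, and use the convexity of $\Gamma$ together with the finiteness of $\Gamma(P)$ to pass to the limit. The only (harmless, and slightly cleaner) difference is that you run the argument for an arbitrary $Q^\eta\in\Qc_f^c$, whereas the paper applies it to the minimizer $Q^{\eta^*}$ and so implicitly invokes the existence theorem.
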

\begin{proof}
 Let $Q^{\eta^*} \in \Qc_f^c$ such that $\inf\limits_{Q^\eta\in \Qc_f^c}\E_{Q^\eta}[c(.,Q^\eta)]=\E_{Q^{\eta^*}}[c(.,Q^{\eta^*})]$ and $\lambda \in [0,1)$,  then $\lambda Q^{\eta^*} +(1-\lambda) P \in \Qc_f^{c,e}.$
Since $Q^\eta\mapsto \Gamma(Q^\eta)$ is convex then
$$\Gamma(\lambda Q^{\eta^*} +(1-\lambda) P)\leq \lambda\Gamma( Q^{\eta^*}) +(1-\lambda) \Gamma(P) \;\; \forall \lambda \in [0,1),$$
which implies $$\limsup\limits_{\lambda\rightarrow 1}\Gamma(\lambda Q^{\eta^*} +(1-\lambda) P)\leq \Gamma( Q^{\eta^*}). $$
Consequently, we have
$$\inf\limits_{Q^\eta\in \Qc_f^c}\Gamma(Q^\eta)\geq\inf\limits_{Q^\eta\in \Qc_f^{c,e}}\Gamma(Q^\eta).$$
The converse  inequality holds since $\Qc_f^{c,e} \subset \Qc_f^c.$\\
\ep
\end{proof}

 We later use  a strong order relation on the set of increasing processes defined by
\begin{Definition}
Let $A$ and $B$ two increasing process. We say $A\preceq B$ if the process $B-A$ is increasing.
\end{Definition}
We already know from Theorem \ref{exisun} that there is an optimal model  $Q^{\eta^*}\in\Qc_f^c$.
For each $ Q^\eta \in \Qc^{c,e}_f $ and $ \tau \in \Sc $, we  define the value of the control problem starting at time $\tau$
$$ V (\tau, Q^\eta)= Q^\eta-ess\inf\limits_ {{\eta'}\in \Dc^c(\eta,\tau)}\tilde{V} (\tau, Q^{\eta'}), $$
where
$$ \tilde{V}(\tau,Q^{\eta'}) = \E_{Q^{\eta'}} [\Uc^\delta_ {\tau,T} | \Fc_ \tau] + \beta \E_{Q^{\eta'}} [\Rc_{\tau, T}^\delta(Q^{\eta'}) | \Fc_ \tau]. $$
%$V(\tau, Q) $ is the value of the control problem started at time $ \tau $ instead of $ 0 $ along the lines $ Q $ to %time $ \tau $.
We need to define the following space
$$\Hc^p_d=\Big\{ (Z_t)_{0\leq t \leq T}\mathbb{F}\textrm{-progressively measurable process valued in}\;\; \R^d\;\; s.t \;\;\E_P[(\int_0^T|Z_u|^2du)^\frac{p}{2}]<\infty\Big\}.$$
Before stating the main result of this section, we recall a result on the existence and uniqueness of a family of BSDE due to Briand
and Hu \cite{BH07} (see also Barieu and El Karoui \cite{Bar13}).
\begin{Theorem}
We assume that there exist two constants $\mu>0$  and $\nu > 0  $ together with a nonnegative
progressively measurable stochastic process ${(\rho_t)}_{0\leq t\leq T}$ such that, $P-$a.s.,
\begin{enumerate}
\item[(i)]for all $t \in [0, T ]$, for all $y \in \mathbb{R}$, $z \longmapsto f (t, y, z)$ is convex;
\item[(ii)] for all $(t,z) \in [0,T ] \times \mathbb{R} ,(y, y') \in \mathbb{R}^2 , |f (t,y,z) - f (t,y',z)|\leq \nu |y - y'| $
\item[(iii)] $f $ has the following growth: $$|f (t, y, z)| \leq \rho_t + \nu|y| +\mu |z|^2 ; \forall (t, y, z) \in [0, T ]
\times \mathbb{R} \times \mathbb{R}^d,$$
\item [(iv)]$|\rho|_1:=\int_0^T|\rho_t|dt$ and $|\xi|$ have exponential moments of all order.
\end{enumerate}
 Then the BSDE
 $$
Y_t=\xi + \int_t^T f(s,Y_s,Z_s)\,ds-\int_t^T Z_s dW_s,\qquad 0\leq t\leq T,
$$
 has a unique
solution $(Y,Z)$ such that $Y$ belongs to $L^{exp}$ and $Z$ belongs to $\Hc^p_d$ for each $p\geq 1.$

\end{Theorem}
The following result characterizes  value process $V$ as the unique solution of a BSDE with a quadratic generator and unbounded terminal condition.
Precisely we have
\begin{Theorem}
Under the assumptions \textbf{(A1)-(A'2)},  pair $(V,Z)$ is the unique solution in $D_0^{\exp}\times \Hc^p_{d}, p\geq 1, $ of the following BSDE:
\begin{equation}\label{bsde description}
\left \{\begin{split}
&dY_t  = (\delta_tY_t-\alpha U_t+\beta h^*(\frac{1}{\beta}Z_t))dt - Z_tdW_t,
\\ & Y_T = \bar{\alpha} \bar{U}_T.
\end{split} \right.
\end{equation}
and $Q^*$ is equivalent to $P.$
\end{Theorem}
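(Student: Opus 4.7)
The strategy is to identify $(V,Z)$ via the martingale optimality principle (Proposition \ref{martingaleoptimality}) combined with the Brownian martingale representation, and to derive the generator by Fenchel--Legendre duality. Let $\eta^{*}\in\Dc^{c,e}$ be the optimizer given by Theorem \ref{exisun}, which exists in $\Qc^{c,e}_f$ thanks to Proposition \ref{infQeq}. By Proposition \ref{martingaleoptimality}, the family $\{J(\tau,Q^{\eta^{*}})\}_{\tau\in\Sc}$ aggregates into an RCLL $Q^{\eta^{*}}$-martingale, while it is a $Q^{\eta}$-submartingale for every admissible $\eta$. Subtracting the accumulated cost, the process
$$M_t := S^\delta_t V_t + \alpha\int_0^t S^\delta_s U_s\,ds + \beta\int_0^t S^\delta_s h(\eta^{*}_s)\,ds$$
is a $Q^{\eta^{*}}$-martingale, and it is a $Q^\eta$-submartingale for any other $\eta$.

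Since the filtration is Brownian, I would apply the martingale representation theorem to $M$ to produce a predictable $\tilde Z$ with $dM_t = \tilde Z_t dW^{*}_t$ under $Q^{\eta^{*}}$. Girsanov's transformation $dW_t = dW^{*}_t + \eta_t dt$ then rewrites the drift of $V$ under $P$. Optimality of $\eta^{*}$ forces the pointwise minimization: at each $t$, $\eta^{*}_t$ minimizes $\eta\mapsto \beta h(\eta) - Z_t\!\cdot\!\eta$, so that
$$\inf_{\eta}\bigl(\beta h(\eta) - Z_t\!\cdot\!\eta\bigr) = -\beta h^{*}\!\Bigl(\tfrac{1}{\beta}Z_t\Bigr),$$
with the attained selector $\eta^{*}_t$ characterized by $Z_t\in\beta\,\partial h(\eta^{*}_t)$. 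After undoing the discounting via Itô's product rule, the drift of $V$ picks up the $\delta_t V_t$ term and one obtains precisely the generator $\delta_t y - \alpha U_t + h^{*}(\tfrac{1}{\beta}z)$. Existence of a solution in $D^{\exp}_0\times\Hc^p_d$ then follows: $V\in D^{\exp}_0$ is a consequence of the a priori estimates in Proposition \ref{prop3.1} and Corollary \ref{controlentropy} together with (A'2); membership $Z\in\Hc^p_d$ is obtained by applying Itô's formula to $\exp(\lambda V_t)$ and exploiting the lower bound $h(\eta)\ge\kappa_1|\eta|^2-\kappa_2$ through the coercivity it transfers to $h^{*}$.

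\textbf{The main obstacle is uniqueness}, because the generator has quadratic growth in $z$ (by Lemma \ref{lingrowth} applied to $h^{*}$) while the terminal condition $\bar\alpha\bar U_T$ is only exponentially integrable, not bounded. The plan is to adapt the uniqueness argument of Briand--Hu for quadratic BSDEs with unbounded terminal data: given two solutions $(Y^1,Z^1),(Y^2,Z^2)\in D^{\exp}_0\times\Hc^p_d$, apply Itô's formula to $\exp(\lambda(Y^1-Y^2))$ for $\lambda$ large enough, use convexity of $h^{*}$ to linearize the quadratic term via a subgradient $\theta\in\partial h^{*}$ with $|\theta|\le A+B(|Z^1|+|Z^2|)$ (the linear-growth estimate from Lemma \ref{lingrowth}), then absorb the remainder into a Girsanov change of measure whose density is a genuine martingale thanks to the BMO-type or $L^p$ controls coming from the exponential integrability of the $Y^i$.

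Finally, equivalence $Q^{*}\approx P$ follows from two facts: Proposition \ref{infQeq} already reduces the infimum to the equivalent class $\Qc^{c,e}_f$, and the first-order relation $Z_t\in\beta\,\partial h(\eta^{*}_t)$ together with $Z\in\Hc^2_d$ and the quadratic coercivity of $h$ gives $\int_0^T|\eta^{*}_s|^2\,ds<+\infty$ almost surely, so $\Ec(\int_0^\cdot\eta^{*}_s\,dW_s)$ is a strictly positive continuous martingale and $Q^{*}=Q^{\eta^{*}}\approx P$ on $\Fc_T$.
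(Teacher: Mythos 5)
Your overall strategy --- martingale optimality principle, Brownian representation, Fenchel--Legendre identification of the generator, Briand--Hu for uniqueness, and linear growth of $(h^*)'$ for the equivalence --- is the same as the paper's, but there is a genuine circularity in how you set it up. You begin by taking ``$\eta^*\in\Dc^{c,e}$, which exists in $\Qc^{c,e}_f$ thanks to Proposition \ref{infQeq}''; that proposition only shows that the two infima coincide, it does not relocate the minimizer of Theorem \ref{exisun} (which a priori lives only in $\Qc^c_f$) into the equivalent class. You then apply the martingale representation theorem to $M$ \emph{under} $Q^{\eta^*}$ and transfer the dynamics back to $P$ by Girsanov. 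If $Q^{\eta^*}$ is merely absolutely continuous, this identifies the decomposition of $V$ only $Q^{\eta^*}$-a.s., i.e.\ off the set $\{Z^{\eta^*}_T=0\}$, which may have positive $P$-measure, so the BSDE is not obtained $P$-a.s. Since your final argument for $Q^*\approx P$ reads the first-order condition $Z_t\in\beta\,\partial h(\eta^*_t)$ off that very BSDE, the equivalence is being used to prove itself.

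The paper avoids this by never representing under $Q^{\eta^*}$: it takes $\eta\equiv 0$, so that $J^0=S^\delta V+\alpha\int S^\delta_s U_s\,ds$ is a $P$-submartingale, deduces that $V$ is a $P$-special semimartingale, and writes its canonical decomposition $V_t=V_0-\int_0^t q_s\,dW_s+\int_0^t K_s\,ds$ directly under $P$. The drift is then pinned down by requiring $J^\eta$ to be a $Q^\eta$-submartingale for every admissible $\eta$ and a martingale for the optimizer, which forces $K_t=\delta_tV_t-\alpha U_t+\beta h^*(q_t/\beta)$; only afterwards is $\eta^*_t=(h^*)'(q_t/\beta)$ \emph{defined} from the BSDE and shown, via Lemma \ref{lingrowth} and $\int_0^T|q_t|^2dt<\infty$ $P$-a.s., to satisfy $\int_0^T|\eta^*_t|^2dt<\infty$ $P$-a.s., whence $Q^{\eta^*}\approx P$. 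If you replace your first two steps by this $P$-decomposition, the remainder of your argument (including the uniqueness sketch, which the paper itself only outlines in a remark citing Briand--Hu) goes through.
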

\begin{proof}
\\\underline{First step: the process $V$ satisfies the BSDE(\ref{bsde description})}
\\ By using Bayes' formula and the definition of $ \Rc_ {\tau,T}^\delta(Q^{\eta'})$, it is clear that $\tilde {V}(\tau,Q^{\eta'})$ depends only on the values of $ \eta'$ on $(\tau, T]$ and is therefore independent of  $Q^\eta$ since $Q^\eta=Q^{\eta'}$ on $\Fc_\tau.$
Thus we can also take the ess$\inf$ under $P\approx Q^\eta $ .
From Proposition \ref{infQeq}, we could take the infimum over the set $\Qc_f^{c,e},$  which implies
 $$ V (\tau, Q^\eta)= P-\underset{{\eta'}\in \Dc^c(\eta,\tau)\cap \Dc^{c,e}}{\essinf}\tilde{V} (\tau, Q^{\eta'}),$$ for all $Q^\eta\in \Qc_f^{c,e}.$
 Since $V(\tau,Q^\eta )$ is independent of $Q^\eta$, we can denote $V(\tau,Q^\eta )$ by $V(\tau).$
\\We fix $\eta'\in \Dc(Q^\eta,\tau)$. From the definition of $\Rc_{t,T}^\delta (Q^{\eta'}) $ (see equation (\ref{penalty definition ctc})), we have
\begin{equation*}
\begin{split}
\Rc_{0, T}^\delta (Q ^{\eta'})& = \dint_0^T \delta_sS_s ^ \delta (\int_0^sh (\eta'_u)du)ds + S_T^\delta \int_0^T h(\eta'_u)du
\\& = \dint_0^\tau \delta_sS_s^\delta(\int_0^sh(\eta_u)du)ds+S_\tau^\delta \int_0^\tau h(\eta_u)du + S_\tau^\delta \Rc_{\tau,T}^\delta(Q^{\eta'}).
\end{split}
\end{equation*}
By comparing the definitions of $ V(\tau)=V(\tau, Q^\eta) $ and $J_\tau^\eta,$ then we get for $ Q^\eta \in \Qc^{c,e}_f $
\begin{equation}\label{jtau}
J^\eta_\tau = S^\delta_\tau V_\tau + \alpha \int_0^\tau S^\delta_sU_sds + \beta \Big(\dint_0^\tau \delta_sS_s^\delta (\int_0 ^sh (\eta_u)du)ds + S_\tau^\delta \int_0^\tau h(\eta_u)du\Big).
\end{equation}
Arguing as above, the ess$\inf$ for $ J_\tau^\eta $ could be taken under $ P \approx Q^\eta $.
From the Proposition \ref{martingaleoptimality}, $J^\eta_\tau$ admits an RCLL version. From equality (\ref{jtau}), an appropriate RCLL process $V=(V_t)_{0 \leq t \leq T }$ can be chosen  such that
$$ V_\tau = V(\tau) = V(\tau,Q^\eta), \; \mathbb{P}.a.s \,\;\; \textrm{for all}\;\; \tau \in \Sc \, \, \textrm {and} \, \, Q^\eta \in \Qc_f^{c,e} $$
and then we have for all $ Q^\eta \in \Qc^{c,e}_f $
\begin{equation}\label{jq}
J_t^\eta=S_t^\delta V_t + \alpha \int_0^t S^\delta_sU_sds + \beta \Big(\dint_0^t \delta_sS_s^\delta (\int_0^sh (\eta_u) du) ds + S_t^\delta \int_0^t h(\eta_s) ds \Big)\;\; dt\otimes dP\; \textrm{a.e},\; 0\leq t\leq T.
\end{equation}
 If we take $\eta\equiv 0, $ the probability measure $Q^0$ coincides with the historical probability measure $P.$ Then, by the Proposition \ref{martingaleoptimality}, $J^0 $ is $ P$- submartingale. From equation (\ref{jtau}),
$ J^0=S^\delta V+\alpha \int S^\delta_sU_sds $ and thus, by It$\hat{\textrm{o}}$'s lemma,  it can be deduced  that $V$ is a $P$-special semimartingale. Its canonical decomposition can be written  as follows:
\begin{equation}\label{canonicdecomp}
 V_t= V_0 - \int_0^t q_sdW_s + \int_0^t K_sds .
 \end{equation}
For each $ Q^\eta \in \Qc^{c,e}_f$, we have $Z_.^\eta=\Ec(\int_0^. \eta dW). $ Plugging (\ref{canonicdecomp}) into (\ref{jtau}), we obtain
\begin{equation*}
dJ^\eta_t =  S^\delta_t (-q_tdW_t+K_tdt) - \delta_tS_tV_tdt + \alpha S_tU_tdt + \beta  S_t^\delta h(\eta_t) dt.
\end{equation*}
\\ By Girsanov's  theorem the process $-\int_0 ^. q_tdW_t+\int_0 ^.q_t \eta_tdt $ is a local martingale under $Q^\eta$ and the dynamic of $(J^\eta_t)_t$ is given by:
\begin{equation*}
dJ^\eta_t = S^\delta_t (-q_tdW_t+q_t \eta_tdt) + S^\delta_t (K_t- q_t\eta_t + \beta h (\eta_t)) dt-\delta_tS^\delta_tV_tdt + \alpha S^\delta_tU_tdt.
\end{equation*}
$J^\eta $ is a $Q^\eta-$ submartingale  and $J^{\eta^*}$ is $ Q^{\eta^*}$- martingale. Such properties hold if we choose $K_t= \delta V_t-\alpha U_t - \mbox{ess}\inf\limits_{\eta}(-q_t\eta_t+\beta h(\eta_t)),$
 where the essential infimum  is taken in the sense of strong order $\preceq .$
 Therefore
 \begin{equation}\label{processK}
 K_t= \delta V_t-\alpha U_t + \mbox{ess}\sup\limits_{\eta}(q_t\eta_t-\beta h(\eta_t))=
  \delta_t V_t-\alpha U_t + \beta h^*(\frac{1}{\beta}q_t).
 \end{equation}
\\ This ess $\inf$ is reached for  $\eta^*_t$ in the subdifferential of $h^*$ at $\displaystyle\frac{1}{\beta}q_t.$% $\partial h^*(\frac{1}{\beta}q_t).$
%$\eta^*= (h')^{-1}(\frac{1}{\beta}q_t).$
From (\ref{canonicdecomp}) and (\ref{processK}) we deduce that:
\begin{equation*}\left\{ \begin{split}
   & dV_t=(\delta_tV_t-\alpha U_t +\beta h^*(\frac{1}{\beta}q_t))dt - q_tdW_t  \\
 &  V_T=\bar{\alpha}\bar{U}_T.
 \end{split}
 \right.
\end{equation*}
\underline{Second step: The minimal probability measure $Q^{\eta^*}$ is equivalent to $P$}
\\Since
$q_t\eta^*_t-\beta h(\eta^*_t)=\displaystyle\beta h^*(\frac{1}{\beta}q_t)$ , we have
$$ h(\eta^*_t)=\frac{1}{\beta}[q_t\eta^*_t-\beta h^*(\frac{1}{\beta}q_t)].$$
Thus, we have
\begin{equation*}
\begin{split}
\kappa_1\|\eta^*_t\|^2-\kappa_2\leq |h(\eta_t^*)|& \leq \frac{1}{\beta}\|q_t\eta_t^*\|+|h^*(\frac{1}{\beta}q_t)|
\\&\leq \frac{1}{\beta}(\epsilon^2\|q_t\|^2+\frac{1}{\epsilon^2}\|\eta^*_t\|^2)+|h^*(\frac{1}{\beta}q_t)|
\\&\leq \frac{1}{\beta}(\epsilon^2\|q_t\|^2+\frac{1}{\epsilon^2}\|\eta^*_t\|^2)+\frac{1}{4\kappa_1}\|\frac{1}{\beta}q_t\|^2+\kappa_2.
\end{split}
\end{equation*}
The last inequality is a consequence of the fact that
$$
h(x)\geq \kappa_1\|x\|^2-\kappa_2,
$$
implies
$$h^*(x)\leq \frac{1}{4\kappa_1}\|x\|^2+\kappa_2.$$
Therefore,
\begin{equation*}
\begin{split}
(\kappa_1-\frac{1}{\beta\epsilon^2})\|\eta^*_t\|^2\leq (\frac{\epsilon^2}{\beta}+\frac{1}{4\kappa_1\beta^2})\|q_t\|^2+2\kappa_2.
\end{split}
\end{equation*}
By choosing $\epsilon$ large enough such that $\displaystyle \kappa_1-\frac{1}{\beta\epsilon^2}$ is strictly positive, there exists $C_1>0,$ $C_2 \in \mathbb{R}$ such that $$\|\eta^*_t\|^2\leq C_1 \|q_t\|^2+C_2.$$
 %Since $\eta^*_t\in \partial h(\frac{1}{\beta}q_t)$ we have,
%$$\frac{1}{\beta}q_t \eta^*_t=(\frac{2}{\beta}q_t-\frac{1}{\beta}q_t )\eta^*_t \leq h(\frac{2}{\beta}q_t)-h(\frac{1}{\beta}q_t) $$
%and
%$$-\frac{1}{\beta}q_t \eta^*_t=(0-\frac{1}{\beta}q_t )\eta^*_t \leq h(0)-h(\frac{1}{\beta}q_t) $$
%Moreover we have, $(h^*)'(q_t)=\eta^*_t \;\;dt \otimes dP$ a.s . From Lemma \ref{lingrowth}, there is a positive constant $c$ such that $|(h^*)'(x)|\leq c(|x|+1).$ Then, we have
%\begin{equation}
%\int_0^T|\eta^*_t|^2dt=\int_0^T|(h^*)'(q_t)|^2dt\leq c^2\int_0^T(1+|q_t|)^2dt<+\infty,
%\end{equation}
The process $V$ is a $P$-special semimartingale, then $\int_0^T\|q_t\|^2dt <\infty P.a.s$
which implies that $P\big(\{\displaystyle\frac{dQ^{\eta^*}}{dP}|_{\Fc_T}=0\}\big)=P\big(\{\int_0^T\|\eta^*_t\|^2dt=\infty\}\big)=0$. Hence the probability measure $Q^{\eta^*}$ is equivalent to $ P.$
\\\underline{Third step: the process $V$ lies in $D_0^{\exp}.$}
Because $D_0^{\exp}$ is a vector space, it is enough to prove that $V^+$ and $V^-$ lie both in it.
\\We first show that the process $V^+$ is in $D_0^{\exp}.$
By definition of the utility term, we have $$V_t \leq \E_ {P} [\Uc^\delta_{t,T} | \Fc_\tau]  \leq \E_ {P} [\Uc^\delta_{0,T} | \Fc_\tau].$$
\\Fix $\gamma >0$ and choose an RCLL version of the $P$- martingale $N$ defined by  $N_t:=E_P[e^{\gamma |\Uc^\delta_{0,T}|}|\mathcal{F}_t].$
\\Then the continuity of $V$ (see \ref{canonicdecomp} ) and Jensen's inequality imply that
\begin{equation}\label{ineq14}
\begin{split}
\exp(\gamma \esssup_{0\leq t \leq T}V^+_t)&=\exp(\gamma \sup\limits_{0\leq t \leq T}V_t^+)
\\& \leq \sup\limits_{0\leq t \leq T}N_t.
\end{split}
\end{equation}
Since $S^\delta_T \leq 1,$ we have
$$|\Uc_{0,T}^\delta|\leq \alpha\int_0^T|U_s|ds+\bar{\alpha}|\bar{U}_T|=R.$$
Since $e^{\gamma R}\in L^p(P)$ for every $p\in (0,+\infty)$, Doob's inequality imply that $\sup\limits_{0\leq t \leq T}N_t$ is in $L^p(P)$ for every $p\in (1,+\infty)$. Hence the result follows from (\ref{ineq14}).
\\It remains to show that  $V^-$ is also in $D_0^{\exp}.$
For this reason we use the integrability results obtained by  Bordigoni, Matoussi and Schweizer \cite{BMS05}
 in the context of an entropic Penalty  i.e. when the penalty term is given by
\begin{equation}\label{penalty entropic definition ctc} \Rc_{t, T}^{\delta,H}(Q^\eta)= \dint_t^ T \delta_s \frac{S_s^\delta}{S_t ^ \delta} (\frac{1}{2}\int_t^s \|\eta_u\|^2) du) ds + \frac{S_T^\delta} {S_t^\delta}\frac{1}{2} \int_t^T \|\eta_u\|^2 du,  \; \forall  \, 0 \leq t \leq T, \end{equation}
Since $h(x)\geq\kappa_1 |x|^2-\kappa_2, $ we have $\displaystyle \frac{1}{2}|\eta_u|^2\leq \frac{1}{2\kappa_1}h(\eta_u)+\frac{\kappa_2}{2\kappa_1},$ which implies
\begin{equation*}
\begin{split}
\Rc_{t, T}^{\delta,H}(Q^\eta)&\leq \dint_t^ T \delta_s \frac{S_s^\delta}{S_t ^ \delta} (\frac{\kappa_2}{2\kappa_1}\int_t^s du) ds + \frac{S_T^\delta} {S_t^\delta}\frac{\kappa_2}{2\kappa_1} \int_t^T  du
\\&+ \dint_t^ T \delta_s \frac{S_s^\delta}{S_t ^ \delta} (\frac{1}{2\kappa_1}\int_t^s h(\eta_u) du) ds + \frac{S_T^\delta} {S_t^\delta}\frac{1}{2\kappa_1} \int_t^T h(\eta_u) du.
\\& \leq \frac{\kappa_2}{2\kappa_1}T(1+T\|\delta\|_\infty)+\frac{1}{2\kappa_1}\Rc_{t,T}^{\delta}(Q^\eta).
\end{split}
\end{equation*}
Using the definition of $V$, we obtain
$$V_t\geq V^H_t-\beta\kappa_2T(1+T\|\delta\|_\infty)\;\; dt\otimes dP.a.s,$$
where $V^H$  is the value process when the penalty is entropic and the parameter $\beta$  is replaced by $\displaystyle 2\beta\kappa_1.$
\\And Consequently $$V^-_t\leq (V^H_t)^-+\beta\kappa_2T(1+T\|\delta\|_\infty) dt\otimes dP.a.s.$$
\\By Bordigoni, Matoussi and Schweizer \cite{BMS05}, we have $(V^H)^- \in D_0^{\exp}, $ this implies that $V^- \in D_0^{\exp}.$
\ep
\end{proof}
\begin{Remark}
According to  Briand and Hu \cite{BH07} the equation (\ref{bsde description}) has a unique solution because
$$h^*(x)\leq \frac{1}{4\kappa_1}\|x\|^2+\kappa_2$$
and hence the driver $f$ of BSDE (\ref{bsde description}) given by $f(t,w,y,z)=\delta_ty-\alpha U_t +\beta h^*(\frac{1}{\beta}z)$ satisfies:
\begin{enumerate}
\item for all $t\in [0,T]$, for all $y\in \R,$ $z\mapsto f(t,y,z)$ is convex;
\item for all $(t,z)\in[0,T]\times \R^d$,
$$\forall (y,y')\in \R^2; |f(t,y,z)-f(t,y',z)|\leq \parallel \delta\parallel_\infty|y-y'|.$$
\item for all $(t,y,z)\in[0,T]\times\R \times \R^d$, \begin{equation*}
\begin{split}
|f(t,y,z)|&=|\delta_ty-\alpha U_t +\beta h^*(\frac{1}{\beta}z)|
\leq\parallel\delta\parallel_\infty |y|+|\alpha| |U_t|+\frac{1}{4\kappa_1\beta}\|z\|^2+\beta\kappa_2.
\end{split}
\end{equation*}
\end{enumerate}
Since the process $ U\in D_1^{\exp}$ and the terminal condition $\bar \alpha\bar{U}_T$ belongs to $L^{\exp}$, the existence of the BSDE solution is insured. The uniqueness result is a direct consequence of  the convexity proprety of $h^*.$
\end{Remark}
\subsection{A comparison with related results}
In the case of the entropic penalty which corresponds to $h(x)=\frac{1}{2}\|x\|^2,$ the value process  is described through the backward stochastic differential equation:
\begin{equation}\label{bsde description entropic case}
\left \{\begin{split}
dY_t & = (\delta_tY_t-\alpha U_t+ \frac{1}{2\beta}\|Z\|^2_t)dt - Z_tdW_t,
\\  Y_T& = \bar{\alpha}\bar{U}_T.
\end{split} \right.
\end{equation}
These results are obtained by Schroder and Skiadas in \cite{SK03,SS03} where $\bar{\alpha}=0.$
In the context of a dynamic concave utility, Delbaen, Hu and Bao \cite{Delb09} treated  the case $\alpha=0,\delta=0,\beta=1$ and$\xi=\bar{\alpha}\bar{U}_T$ is bounded. In this special case the existence of an optimal probability is a direct consequence of Dunford-Pettis' theorem and James' theorem  shown in Jouini-Schachermayer-Touzi's work \cite{JST05}.
%\begin{Theorem}{\textbf{\textsc{(Theorem 5.2 in \cite{JST05})}}} \\ Suppose that $L^1(\Omega; \Fc; \mathbb{P} )$ is separable,
%let $u:L^\infty(\Omega; \Fc; \mathbb{P} )\rightarrow \mathbb{R}$ be a monetary utility function satisfying the Fatou property and let $V:L^\infty(\Omega; \Fc; \mathbb{P})^*\rightarrow [0,+\infty]$  be its penalty function (conjugate function of $u$) defined on the dual $L^\infty(\Omega; \Fc; \mathbb{P})^* .$
%The following assertions are equivalent:
%\begin{enumerate}
%\item[(i)] For each $k\in\mathbb{R},\{V\leq k\}$ is a weakly compact subset of $L^1(\Omega; \Fc; \mathbb{P} ).$
%\item[(ii)] For every $\xi \in L^\infty(\Omega; \Fc; \mathbb{P})$ , the infimum in the equality
%$$u(\xi)=ess.\inf\limits_{Y\in L^1}\{E[\xi Y]+V[Y]\}$$
%is attained.
%\end{enumerate}
%\end{Theorem}
Delbaen et al. showed that the dynamic concave utility $$Y_t= ess\inf\limits_{Q\in \Qc_f}E_Q[\xi+\int_t^Th(\eta_u)du|\mathcal{F}_t]$$
satisfies the following BSDE:
\begin{equation}
\left \{\begin{split}
&dY_t  = h^*(Z_t)dt - Z_tdW_t,
\\ & Y_T = \xi.
\end{split} \right.
\end{equation}
$\hfill \Box $

\subsection{Example : Portfolio  and consumption choice}\label{exampleprotfolio}
First, we note that the characterization of the value function using BSDE allows us to establish a connection between robust  utlity and recursive utility. This class of non-time-separable utility  has been studied by a number of authors. Kreps and Porteus \cite{KP78},  Epstein and  Zin\cite{EZ89}  analyzed this type of utility in a discrete-time setting,
while Duffie and  Epstein \cite{DE92} studied the continuous-time case.
In the following, we propose an exemple  with portfolio choice where  we take $\delta \equiv 0$  and $h(x)=\kappa \|x\|^2$,  $\kappa \in \mathbb{R}_+^*.$
Then, we have $h^*(x)=\frac{1}{4\kappa} \|x\|^2$ and so the value function of our control problem satisfies the dynamics:
\begin{equation}\label{bsde desc 1}
\left \{\begin{split}
dY_t  &= (-\alpha U_t+\frac{1}{4\kappa\beta} \|Z_t\|^2)dt - Z_tdW_t,
\\  Y_T &= \bar{\alpha} \bar{U}_T.
\end{split} \right.
\end{equation}
We consider the process  $(\Gamma_t)_{t\in [0,T]}$ defined by $\displaystyle \Gamma_t=Y_t+\int_0^t \alpha U_s ds.$ This process satisfies the following BSDE
\begin{equation}\label{bsde desc 2}
\left \{\begin{split}
d\Gamma_t & = \frac{1}{4\kappa\beta} \|Z_t\|^2dt -  Z_tdW_t,
\\  \Gamma_T & = \bar{\alpha} \bar{U}_T+\int_0^T \alpha_s U_s ds.
\end{split} \right.
\end{equation}
The solution of the BSDE (\ref{bsde desc 2}) is given by $\Gamma_t=\displaystyle -2\kappa\beta \log E_P[\exp(-\frac{1}{2k\beta}(\bar{\alpha}\bar{U}_T+\int_0^T\alpha U_s ds))|\mathcal{F}_t],$ and consequently $$Y_t=\displaystyle -\bar{\beta} \log E_P[\exp(-\frac{1}{\bar{\beta}}(\bar{\alpha}\bar{U}_T+\int_0^T\alpha U_s ds))|\mathcal{F}_t]-\int_0^t \alpha U_s ds,$$ where $\bar{\beta}=2\kappa\beta .$
Then, the problem of robust  utility is simply a problem of recursive utility.

We are now interested  in the  utility maximization problem in the following setting.
We consider an investor who can consume between time $0$ and time $T$,  and denote by $c=(c_t)_{0\leq t\leq T}$ the consumption rate.
We consider a financial market consisting of a bond and one risky asset (for simplicity). Without loss of generality, we assume that the bond is constant. We denote by
$\pi=(\pi_t)_{t\in[0,T]} $ the investment strategy representing the number of shares in the risky asset  $S:=(S_t)$ which evolves according to the Black-Scholes model 
\begin{eqnarray*}
dS_t&=&S_t (\mu dt+ \sigma dW_t\big),\,\,\,S_0=1,
\end{eqnarray*}
where $\mu \in \mathbb{R}$ and $ \sigma>0$.
%We shall assume throughout that the relative risk process
%%\theta_t:=\sigma^{-1}_tb_t,
%$
%satisfies the integrability condition
%$
%\int_0^T|\theta_t|^2dt < \infty,\, P\,a.s.
%$
We denote by $\tilde \Cc$ and $\tilde \Hc$ the following sets
\begin{eqnarray*}
\tilde \Cc&:=&\{ c=(c_t)_{t\in [0,T]}\,\F-\mbox{progressively measurable },\,\, c_t \geq 0\,dt\otimes dP \mbox{ a.e. and } \int_0^T c_tdt <\infty\},\\
\tilde \Hc&:=&\{ \pi=(\pi_t)_{t\in [0,T]}\,\F-\mbox{progressively measurable}, \, \mbox{and} \, \pi \in L(S)\},
\end{eqnarray*}
where $L(S)$ denotes the set of $\F-$ progressively measurable processes such that the stochastic integral with respect to $S$ is well-defined.\\
Given an initial wealth $x\geq 0$ and a policy $(c,\pi)\in \tilde \Cc\times \tilde \Hc$, the wealth process at time $t$ follows the dynamics given by:
\begin{eqnarray}
X_t^{x,c,\pi}&=& x+\int_0^t \pi_u dS_u -\int_0^t c_u du.
\end{eqnarray}
We impose a nonnegativity state constraint~:
\begin{eqnarray*} 
X_t ^{x,c,\pi}&\geq& 0, \; a.s.,  \forall t \in  [0,T].
\end{eqnarray*}
Since we are in the case of  complete market, the budget  constraint is equivalent to $E_{\tilde{P}}[X_T^{x,c,\pi} + \int_0^T c_s ds ] \leqslant x$ where  $\tilde{P}$ is  the unique  equivalent martingale measure.  The investor has preferences modeled by the utility functions $U :=U(c)$ and $\bar U_T:=\bar{U}(X_T^{x,c,\pi})$. The maximization utility  problem of  the investor is given by:
\begin{equation}
\label{RMP}
 v (x):= \sup_{(c,\pi)} Y_0^{x,c,\pi}= \sup_{(c,\pi)} \displaystyle -\bar{\beta} \log E_P[\exp(-\frac{1}{\bar{\beta}}(\bar{\alpha}\bar{U}(X_T^{x,c,\pi})+\int_0^T\alpha U(c_s) ds)].
 \end{equation}
%\\Seen that the function $x\mapsto  2\kappa \log x $ is strictly increasing the last maximization problem is equivalent to the following problem $$\sup_{(c,H)} \displaystyle E_P[-\exp(-\frac{1}{2k}(\bar{\alpha}\bar{U}(X_T^{x,c,H})+\int_0^T\alpha U(c_s) ds)].$$
 We assume that  $\delta \equiv 0$, $\alpha = 0$, $\bar \alpha =1$ and $\bar U(z)=\log(z)$. In this case,  we  give a closed formula for  the optimal investment strategy.  Our stochastic control problem \eqref{RMP} is equivalent to
\begin{eqnarray*}
V^{rm}(x):=\sup _{\xi \in \mathcal{X}(x)}E_P\Big[-\exp\Big(- \frac{1}{\bar{\beta}}\bar U(\xi) \Big)\Big],
\end{eqnarray*}
where $\mathcal{X}(x)=\{\xi\geq 0\,,\xi=x+\int_0^T \pi_tdS_t,\,\, \pi\in L(S)\mbox{ and }E_{\tilde P}[\xi]\leq x\}.$
%, where $\pi=(\pi_t)_t$ denote the number of shares in the risky asset $S$.\\
The utility function $U^{rm}(z)=-\exp\Big(- \frac{1}{\bar{\beta}}\bar U(z)\Big)$ is strictly concave and increasing. It satisfies the Inada condition.
 From Kramkov and Schachermayer \cite{krasch99}, the optimal terminal wealth is given by
\begin{eqnarray}\label{wealth}
\xi^*=I^{rm}(y \tilde Z_T)\,\, a.s.
\end{eqnarray}
where
$I^{rm}(z)=((U^{rm})^{'})^{-1}(z)$ and $y=(V^{rm})^{'}(x)$.
 We know by a classical result in the duality theory that
\begin{eqnarray}\label{dynamiquerichesseoptimale}
X_t^{x,\pi^*}=E_{\tilde P}[I^{rm}(y \tilde Z_T)|\Fc_t],\,\,t\in [0,T].
\end{eqnarray}
From the definition of $\bar U$, we have $U^{rm}(x)=-x^{-\frac{1}{\bar{\beta}}}$ which implies
$I^{rm}(z)=(\frac{1}{\bar{\beta}})^{\frac{\bar{\beta}}{1+\bar{\beta}}}z^{-\frac{\bar{\beta}}{1+\bar{\beta}}}$
and so from equation \reff{dynamiquerichesseoptimale}, we deduce that
\begin{eqnarray}\label{dynamiquerichesseoptimale1}
X_t^{x,\pi^*}=(\frac{1}{\bar{\beta}})^{\frac{\bar{\bar{\beta}}}{1+\bar{\beta}}}y^{-\frac{\bar{\beta}}{1+\bar{\beta}}}
E_{\tilde P}[\tilde Z_T^{-\frac{\bar{\beta}}{1+\bar{\beta}}}|\Fc_t],\,\,t\in [0,T].
\end{eqnarray}
The density of the risk neutral measure is given by $\tilde Z_T=\Ec(-\frac{\mu}{\sigma} W_T)$ and
by the Girsanov theorem,
$\tilde W_t=W_t+\frac{\mu}{\sigma}t$ is a $\tilde P$-Brownian motion.
From \reff{dynamiquerichesseoptimale1}, we obtain
\begin{eqnarray}\label{dynamiquerichesseoptimale2}
X_t^{x,\pi^*}&=&
(\frac{1}{\bar{\beta}})^{\frac{\bar{\beta}}{1+\bar{\beta}}}y^{-\frac{\bar{\beta}}{1+\bar{\beta}}}
E_{\tilde P}[\exp\Big(-\frac{\bar{\beta}}{1+\bar{\beta}}(-\frac{\mu}{\sigma}\tilde W_T+\frac{\mu^2}{2\sigma^2}T)\Big) |\Fc_t]\\
&=&(\frac{1}{\bar{\beta}})^{\frac{\bar{\beta}}{1+\bar{\beta}}}y^{-\frac{\bar{\beta}}{1+\bar{\beta}}}
\exp(-\frac{\bar{\beta}}{(1+\bar{\beta})^2}\frac{\mu^2}{2\sigma^2}T)\bar Z_t,\nonumber
\end{eqnarray}
where $\bar Z_t=\Ec(\frac{\bar{\beta}}{1+\bar{\beta}}\frac{\mu}{\sigma}\tilde W_t)$.
Since $X_0^{x,\pi^*}=x$, we have $(\frac{1}{\bar{\beta}})^{\frac{\bar{\beta}}{1+\bar{\beta}}}y^{-\frac{\bar{\beta}}{1+\bar{\beta}}}\exp(-\frac{\bar{\beta}}{(1+\bar{\beta})^2}\frac{\mu^2}{2\sigma^2}T)=x$.
From equation \reff{dynamiquerichesseoptimale2} and using It\^o's formula, we have
\begin{eqnarray*}\label{dynamiquerichesseoptimale3}
dX_t^{x,\pi^*}=x \frac{\bar{\beta}}{1+\bar{\beta}}\frac{\mu}{\sigma}\bar Z_t d\tilde W_t.
\end{eqnarray*}
Since $dX_t^{x,\pi^*}=\pi^*_t\sigma S_td\tilde W_t$, we have by identification that
\begin{eqnarray*}
\pi^*_t&=&x\frac{\bar{\beta}}{1+\bar{\beta}}\frac{\mu\bar Z_t}{\sigma^2 S_t}\\&=&
x\frac{\mu}{\sigma^2}\frac{\bar{\beta}}{1+\bar{\beta}}\frac{\exp({\frac{\bar{\beta}}{1+\bar{\beta}}\frac{\mu}{\sigma}}\tilde{W}_t-\frac{1}{2}\frac{\bar{\beta}^2}{(1+\bar{\beta})^2}t\frac{\mu^2}{\sigma^2})}{ \exp(\sigma \tilde{W}_t-\frac{1}{2}\sigma^2t )}
\\&=&
x\frac{\bar{\beta} \mu}{(1+\bar{\beta})\sigma^2}\exp({(\frac{\bar{\beta}}{1+\bar{\beta}}
\frac{\mu}{\sigma}}-\sigma)\tilde{W}_t-\frac{1}{2}(\frac{\bar{\beta}^2}{(1+\bar{\beta})^2}\frac{\mu^2}{\sigma^2}-\sigma^2)t),
 \quad a.s., \; \forall \, t \in [0,T].
\end{eqnarray*}
When $\beta$ goes to $+\infty$, we force the  penalty term which appears in the dynamics of the value function to vanish and our model converges to the classical utility maximization setting, 
$$\pi^{*,\infty}_t=x
\frac{\mu\bar Z^\infty_t}{\sigma^2 S_t}$$
where $$\bar Z_t^\infty=\Ec(\frac{\mu}{\sigma}\tilde W_t).$$
Such results could be interpreted as a stability result.
%\section{Conclusion}
%The study presented in this paper is the first step of the utility maximization problem under uncertainty which consists on solving an $\inf$ problem over a set of plausible probability measures representing different scenaries.
%\\We derived the associated general BSDE with unbounded terminal condition and we proved the existence of unique solution of the BSDE and the optimal probability measure represent the worst scenaries is unique and equivalent to the historical probability measure.
%\\The next step consist on the maximization of last problem over a set of admissible consumption investment strategies. This work is postposed in a future research. In the case of entropic penality, Matoussi Mezghenni and Mnif \cite{MMM14}   studied this problem. They derived a comparaison theorem, a continuity result for the solution of the BSDE
%They proved the existence and the unequeness of the solution of the primal problem. They caracterized the optimal consumption investment strategie by using the dual approach thanks to a dynamic maximum principle.
\section{Appendix}
\setcounter{equation}{0}
\setcounter{Assumption}{0}
\setcounter{Example}{0}
\setcounter{Theorem}{0}
\setcounter{Proposition}{0}
\setcounter{Corollary}{0}
\setcounter{Lemma}{0}
\setcounter{Definition}{0}
\setcounter{Remark}{0}
\subsection{Proof of the Bellman optimal principle}
\subsubsection{f-divergence case}
\begin{Lemma}\label{lemma3}
 For all $\tau \in \Sc$ and all $Q\in \Qc_f$, the random variable $J(\tau ,Q)$ belongs  to $L^1(Q)$
\end{Lemma}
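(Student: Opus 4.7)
The plan is to bound $J(\tau,Q)$ from above and below by $Q$-integrable quantities, then conclude via $Q$-expectation. The main ingredients are already in place: the upper bound comes from the fact that $Q\in\Dc(Q,\tau)$ trivially, while the lower bound is essentially the conditional version of the estimate used in Proposition \ref{cont1}.

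\textbf{Upper bound.} Since $Q\in \Dc(Q,\tau)$, the definition of the essential infimum gives $J(\tau,Q)\le \Gamma(\tau,Q)=\E_Q[c(\cdot,Q)\mid \Fc_\tau]$, hence
$J(\tau,Q)^+ \le \E_Q[|c(\cdot,Q)|\mid \Fc_\tau]$.
Taking $Q$-expectation and applying Proposition \ref{prop1}(1) yields
$\E_Q[J(\tau,Q)^+]\le \E_Q[|c(\cdot,Q)|]<+\infty$.

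\textbf{Lower bound.} I would reuse the estimate derived in the proof of Proposition \ref{cont1}, which is valid at any stopping time $\tau$. For any $Q'\in \Dc(Q,\tau)$ we have $Z_\tau^{Q'}=Z_\tau^Q$ because $Q=Q'$ on $\Fc_\tau$. The same chain of inequalities (Bayes' formula, $f\ge -\kappa$, and the conjugate inequality \reff{convex inequality2} with $\gamma$ large enough that $\eta:=\beta e^{-T\|\delta\|_\infty}-\tfrac{1}{\gamma}>0$) produces, after dropping the non-negative $\tfrac{\eta}{Z_\tau^Q}\E_P[f(Z_T^{Q'})\mid\Fc_\tau]+\kappa$ term using $f\ge -\kappa$, a bound of the form
\begin{equation*}
\Gamma(\tau,Q') \;\ge\; -\frac{C_1}{Z_\tau^Q}\;-\;\frac{1}{\gamma\,Z_\tau^Q}\E_P\!\left[f^{\ast}(\gamma R)\mid \Fc_\tau\right],
\end{equation*}
where $R=\alpha\int_0^T|U_s|\,ds+\bar\alpha|\bar U_T|$ and $C_1$ depends only on $\beta,\kappa,T,\|\delta\|_\infty,\eta$, and in particular is independent of $Q'$. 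Taking the essential infimum over $Q'\in\Dc(Q,\tau)$ preserves this bound, so the same inequality holds with $J(\tau,Q)$ on the left.

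\textbf{Conclusion.} It remains to show the right-hand side is in $L^1(Q)$. Observe that $Z_\tau^Q>0$ on a set of full $Q$-measure. By the martingale property $\E_P[Z_T^Q\mid \Fc_\tau]=Z_\tau^Q$ together with Bayes' rule,
\begin{equation*}
\E_Q\!\left[\tfrac{1}{Z_\tau^Q}\mathbf{1}_{\{Z_\tau^Q>0\}}\right]=\E_P\!\left[\tfrac{Z_T^Q}{Z_\tau^Q}\mathbf{1}_{\{Z_\tau^Q>0\}}\right]\le 1,\qquad \E_Q\!\left[\tfrac{1}{Z_\tau^Q}\E_P[f^{\ast}(\gamma R)\mid\Fc_\tau]\right]\le \E_P[f^{\ast}(\gamma R)],
\end{equation*}
and the latter is finite by Assumption \textbf{(A2)}. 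This controls $\E_Q[J(\tau,Q)^-]$ and, combined with the upper bound above, gives $J(\tau,Q)\in L^1(Q)$.

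The argument is essentially mechanical once Proposition \ref{cont1} is reinterpreted at the conditional level; the only subtlety worth flagging is verifying that the lower bound from that proof depends on $Q'$ only through the $\Fc_\tau$-measurable quantity $Z_\tau^Q$, so that the essential infimum passes through without losing integrability. That is precisely what the use of $f\ge -\kappa$ to discard the $\E_P[f(Z_T^{Q'})\mid\Fc_\tau]$ term achieves.
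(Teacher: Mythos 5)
Your proof is correct and follows essentially the same route as the paper: the upper bound via $J(\tau,Q)\le\Gamma(\tau,Q)$ and Proposition \ref{prop1}, and the lower bound by specializing the conditional estimate \reff{ineq7} to get a $Q'$-independent minorant $-B$ with $B$ built from $\tfrac{1}{Z_\tau^Q}$ and $\tfrac{1}{\gamma Z_\tau^Q}\E_P[f^*(\gamma R)\mid\Fc_\tau]$. The only (harmless) deviations are that the paper tunes $\gamma$ so that $\beta e^{-T\|\delta\|_\infty}-\tfrac1\gamma=0$ rather than dropping a nonnegative term, and that you spell out the Bayes computation showing $\E_Q[B]<\infty$, which the paper merely asserts.
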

\begin{proof}
By definition $$J(\tau ,Q)\leq \Gamma(\tau ,Q) \leq \E_Q[|c(.,Q)|| \Fc_\tau],$$
and consequently $$(J(\tau ,Q))^+ \leq \E_Q[|c(.,Q)|| \Fc_\tau]$$
is $Q-$ integrable according to  Proposition \ref{prop1}.
\\Let us show that $(J(\tau ,Q))^-$ is $Q-$ integrable. We fix $Z^{Q'} \in \Dc(Q,\tau).$
In inequality (\ref{ineq7}), choosing $\gamma > 0$ such that $\beta \displaystyle e^{(-T \Vert\delta \Vert _{\infty})}-\frac{1}{\gamma} =0$, then we obtain
\begin{equation}
\begin{split}
\Gamma(\tau,Q')&\geq -B:=-\beta \kappa \frac{1}{Z_\tau^Q}(T \parallel \delta \parallel_\infty+1) -\frac{1}{Z_\tau^Q}\Big[\frac{1}{\gamma}\E_P[f^*(\gamma \alpha \int_{0}^{T}|U(s)|ds+\gamma \bar{\alpha}
|\bar{U}_T| )|\Fc_\tau]\Big].
\end{split}
\end{equation}
Since the random variable $B$ is nonnegative and does not depend on $Q'$, we conclude that
$J(\tau,Q)\geq -B.$
Since $f^*(x)\geq 0 $ for all $x\geq 0$, we have
\begin{equation}
\begin{split}
J(\tau,Q)^-\leq B:= \beta \kappa \frac{1}{Z_\tau^Q}(T \parallel \delta \parallel_\infty+1) +\frac{1}{Z_\tau^Q}\Big[\frac{1}{\gamma}\E_P[f^*(\gamma \alpha \int_{0}^{T}|U(s)|ds+\gamma \alpha'
|U^{\prime }_T| )|\Fc_\tau]\Big].
\end{split}
\end{equation}
Finally, $B\in L^1(Q)$ because the assumption \textbf{(A1)-(A2)}.
\ep
\end{proof}
\begin{Lemma}
The space $\Dc$  is compatible and stable under bifurcation and the cost functional $c$ is coherent.
\end{Lemma}
\begin{proof}
1-We first prove that $\Dc$ is compatible
\\Take $Z^Q\in \Dc, \tau \in \Sc$ and $Z^{Q'}\in \Dc(Q,\tau)$. Then,  from definition of $\Dc(Q,\tau)$ we have $Q|_{\Fc_\tau}=Q'|_{\Fc_\tau}$
\\2- Take  $Z^Q\in \Dc,\tau \in \Sc , A\in \Fc_\tau$ and $Z^{Q'}\in \Dc(Q,\tau)$ again.  The fact that
$Z^Q|\tau_A|Z^{Q'}:=Z^{Q'}\mathbf{1}_A+Z^Q\mathbf{1}_{A^c}$
is still in $\Dc$ must be checked.
\\To this end, it is enough to show that $Z^Q|\tau_A|Z^{Q'}$ is a $\Fc$-martingale and that $(Z^Q|\tau_A|Z^{Q'})_T$ defines a probability measure in $\Qc_f$.
\\Let us start proving that $Z^Q|\tau_A|Z^{Q'}$ is a martingale. Since our time horizon $T$ is finite, we have to prove that
$$\E_P[(Z^Q|\tau_A|Z^{Q'})_T|\Fc_t]=(Z^Q|\tau_A|Z^{Q'})_t .$$
Observing that $\mathbf{1}_{\{\tau\leq t\}}+\mathbf{1}_{\{\tau > t\}}\equiv 1$, we have
\begin{equation*}
\begin{split}
\E_P[(Z^Q|\tau_A|Z^{Q'})_T|\Fc_t]&= \E_P[Z_T^{Q'}I_A+Z_T^Q\mathbf{1}_{A^c}(\mathbf{1}_{\{\tau\leq t\}}+\mathbf{1}_{\{\tau > t\}})|\Fc_t]
\\&=\E_P[Z_T^{Q'}\mathbf{1}_{A\cap \{\tau\leq t\}}|\Fc_t]+\E_P[Z_T^{Q'}\mathbf{1}_{A\cap \{\tau > t\}}|\Fc_t]
\\&+\E_P[Z_T^{Q}\mathbf{1}_{A\cap \{\tau\leq t\}}|\Fc_t]+\E_P[Z_T^{Q}\mathbf{1}_{A\cap \{\tau > t\}}|\Fc_t].
\end{split}
\end{equation*}
Since $A\cap \{\tau\leq t\}$ and $A^c\cap \{\tau\leq t\}$ are in $\Fc_t$, while  $A\cap \{\tau > t\}$ and  $A^c\cap \{\tau > t\}$ are in $\Fc_\tau$ , we have
\begin{equation*}
\begin{split}
&\E_P[(Z^Q|\tau_A|Z^{Q'})_T|\Fc_t]
\\&=I_{A\cap \{\tau\leq t\}}\E_P[Z_T^{Q'}|\Fc_t]+\E_P[\E_P[Z_T^{Q'}\mathbf{1}_{A\cap \{\tau > t\}}|\Fc_{\tau \vee t}]|\Fc_t]
\\&+\mathbf{1}_{A^c\cap \{\tau\leq t\}}\E_P[Z_T^{Q}|\Fc_t]+\E_P[\E_P[Z_T^{Q}\mathbf{1}_{A^c\cap \{\tau > t\}}|\Fc_{\tau \vee t}]|\Fc_t]
\\&=\mathbf{1}_{A\cap \{\tau\leq t\}}Z_t^{Q'}+P[Z_{\tau \vee t}^{Q'}\mathbf{1}_{A\cap \{\tau > t\}}|\Fc_t]
+ \mathbf{1}_{A^c\cap \{\tau\leq t\}}Z_t^{Q}+P[Z_{\tau \vee t}^{Q}\mathbf{1}_{A^c\cap \{\tau > t\}}|\Fc_t]
\\&=\mathbf{1}_{A\cap \{\tau\leq t\}}Z_t^{Q'}+\E_P[Z_\tau ^{Q'}\mathbf{1}_{A\cap \{\tau > t\}}|\Fc_t]
+ \mathbf{1}_{A^c\cap \{\tau\leq t\}}Z_t^{Q}+\E_P[Z_\tau^{Q}\mathbf{1}_{A^c\cap \{\tau > t\}}|\Fc_t].
\end{split}
\end{equation*}
From the definition of $\Dc(Q,\tau)$, we have $Z_\tau^{Q'}=Z_\tau^Q$ and so
\begin{equation}
\begin{split}
&\E_P[(Z^Q|\tau_A|Z^{Q'})_T|\Fc_t]
\\&=\mathbf{1}_{A\cap \{\tau\leq t\}}Z_t^{Q'}+\E_P[Z_\tau ^{Q}\mathbf{1}_{A\cap \{\tau > t\}}|\Fc_t]
+ \mathbf{1}_{A^c\cap \{\tau\leq t\}}Z_t^{Q}+\E_P[Z_\tau^{Q}I_{A^c\cap \{\tau > t\}}|\Fc_t]
\\&=\mathbf{1}_{A\cap \{\tau\leq t\}}Z_t^{Q'}+\E_P[Z_\tau ^{Q}\mathbf{1}_{\{\tau > t\}}|\Fc_t]+ \mathbf{1}_{A^c\cap \{\tau\leq t\}}Z_t^{Q}
\\&=\mathbf{1}_{A\cap \{\tau\leq t\}}Z_t^{Q'}+Z_t ^{Q}\mathbf{1}_{\{\tau > t\}}+ \mathbf{1}_{A^c\cap \{\tau\leq t\}}Z_t^{Q}
\\&=\mathbf{1}_{A\cap \{\tau\leq t\}}Z_t^{Q'}+Z_t ^{Q}(\mathbf{1}_{\{\tau > t\}\cap A}+\mathbf{1}_{\{\tau > t\}\cap A^c})+ \mathbf{1}_{A^c\cap \{\tau\leq t\}}Z_t^{Q}
\\&=\mathbf{1}_{A\cap \{\tau\leq t\}}Z_t^{Q'}+Z_t^{Q'}\mathbf{1}_{\{\tau > t\}\cap A}+Z_t^{Q}\mathbf{1}_{\{\tau > t\}\cap A^c}+ \mathbf{1}_{A^c\cap \{\tau\leq t\}}Z_t^{Q}
\\&=\mathbf{1}_AZ_t^{Q'}+\mathbf{1}_A^cZ_t^Q
\\&=(Z^Q|\tau_A|Z^{Q'})_t.
\end{split}
\end{equation}
From the definition of  $Z^Q|\tau_A|Z^{Q'}$,  we have $Z^Q|\tau_A|Z^{Q'} \in L^1([0,T])$ and so $Z^Q|\tau_A|Z^{Q'}$  is an $\Fc$-martingale which implies
$$\E_P[(Z^Q|\tau_A|Z^{Q'})_T]=\E_P[Z_0^{Q'}\mathbf{1}_A+Z_0^Q\mathbf{1}_{A^c}]=\mathbf{1}_A+\mathbf{1}_{A^c}=1.$$
It remains to show that $d(\bar{Q})<\infty$ where the density of $\bar{Q}$ is given by $Z^Q|\tau_A|Z^{Q'}.$
We have
\begin{equation}
\begin{split}
d(\bar{Q}|P)+\kappa&=\E_P[f(Z_T^{Q'}\mathbf{1}_A+Z_T^Q\mathbf{1}_{A^c})+\kappa]
\\& = \E_P[\mathbf{1}_A(f(Z_T^{Q'})+\kappa)+\mathbf{1}_{A^c}(f(Z_T^Q)+\kappa)]
\\& \leq \E_P[(f(Z_T^{Q'})+\kappa)+(f(Z_T^Q)+\kappa)]
\\& \leq d(Q|P)+d(Q'|P)+2\kappa.
\end{split}
\end{equation}
The first inequality is deduced from  assumption \textbf{(H2)}.
Then $$d(\bar{Q}|P) \leq d(Q|P)+d(Q'|P)+\kappa< \infty.$$
\\3- Take $Z^Q$ and $Z^{Q'}$ in $\Dc$, we denote by $A$ the set $\{\omega ; Z_T^Q(\omega)=Z_T^{Q'}(\omega)\}$.  It must be proven that
$$ c(\omega,Z^Q(\omega))=c(\omega,Z^{Q'}(\omega))$$ on $A$   $Q-$a.s and ${Q'}-$a.s respectively.
\ep
\end{proof}
\subsubsection{Consistent time penalty case}
\begin{Lemma}
The space $\Dc^c $ is compatible, stable under bifurcation and the cost functional $c$ is coherent.
\end{Lemma}
\noindent
\begin{proof}
\\ 1. $\Dc^c $ is compatible: let $\eta \in \Dc, \tau \in \Sc$ and  $\eta'\in \Dc^c(Q^\eta,\tau).$ Then, by definition of $ \Dc^c(Q^\eta, \tau) $ we have $ Q^\eta|_{\Fc_\tau} = Q^{\eta'}|_{\Fc_\tau}.$
\\ 2.  $\Dc^c$ is stable under bifurcation: let again $\eta \in \Dc^c,\tau \in \Sc , A\in \Fc_\tau$ and $\eta'\in \Dc^c(Q^\eta,\tau).$ It must be checked that
$\eta''=\eta|\tau_A|\eta':=\eta \mathbf{1}_A+\eta' \mathbf{1}_{A^c}$
remains in $\Dc^c. $ i.e.  $E_{Q ^{\eta''}}[\dint_0^Th (\eta''_u) du] <+\infty $. Indeed,
\begin{equation*}
\begin{split}
E_{Q^{\eta''}}[\int_0^Th(\eta''_u)du]&\leq E_{Q^{\eta''}}[\mathbf{1}_A\int_0^Th(\eta_u)du+\mathbf{1}_{A^c}\int_0^Th(\eta'_u)du]
\\&=E_{P}[Z_T^{\eta''}\mathbf{1}_A\int_0^Th(\eta_u)du+Z_T^{\eta''}\mathbf{1}_{A^c}\int_0^Th(\eta'_u)du]
\\&=E_{P}[Z_T^{\eta}\mathbf{1}_A\int_0^Th(\eta_u)du+Z_T^{\eta'}\mathbf{1}_{A^c}\int_0^Th(\eta'_u)du]
\\&\leq E_{Q^\eta}[\int_0^Th(\eta_u)du]+E_{Q^{\eta'}}[\int_0^Th(\eta'_u)du].
\end{split}
\end{equation*}
The last inequality is deduced from the non negativity of $h$ and the second equality is deduced from the definition of $\eta'.$
\\
3. The cost function $c$ is coherent:  let $\eta$ and $\eta'$ in $\Dc^c:$ denote by $ A $ the set $\{\omega, \eta(\omega) = \eta'(\omega) \} $. It is obvious that
$$ c(\omega, \eta (\omega)) = c (\omega, \eta' (\omega)) $$   $Q-a.s$ and $ {Q'}-a.s $ on $A$.
\ep
\end{proof}
\begin{Lemma}
For all $ \tau \in \Sc $ and $ Q^\eta \in \Qc^c_f $ , the random variable $J(\tau, Q^\eta) $ is in $ L^1(Q^\eta).$
\end{Lemma}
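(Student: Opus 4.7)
My plan is to mirror, paragraph by paragraph, the proof of Lemma \ref{lemma3} from the $f$-divergence case, replacing the $f$-divergence bounds by the consistent-time estimates already developed in Section~3.

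For the positive part, the essential infimum defining $J(\tau,Q^\eta)$ is dominated by any particular choice of $\eta'$; taking $\eta'=\eta$ gives $J(\tau,Q^\eta)\leq \Gamma(\tau,Q^\eta)=\E_{Q^\eta}[c(\cdot,Q^\eta)\mid\Fc_\tau]$. Hence
$(J(\tau,Q^\eta))^{+}\leq \E_{Q^\eta}[|c(\cdot,Q^\eta)|\mid\Fc_\tau]$, which is $Q^\eta$-integrable by Proposition~\ref{prop3.1}(1) and the tower property.

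For the negative part, I need a lower bound on $\Gamma(\tau,Q^{\eta'})$ that is uniform in $\eta'\in\Dc^c(\eta,\tau)$. The strategy is to rerun the computation of Proposition~\ref{cont} in conditional form. Since $h\geq 0$ and $e^{-T\|\delta\|_\infty}\leq S_T^\delta/S_\tau^\delta\leq 1$, the penalty term is bounded below by $\beta e^{-T\|\delta\|_\infty}\E_{Q^{\eta'}}[\int_\tau^T h(\eta'_u)\,du\mid\Fc_\tau]$. For the utility part, $\E_{Q^{\eta'}}[\Uc^\delta_{\tau,T}\mid\Fc_\tau]\geq -\E_{Q^{\eta'}}[R_\tau\mid\Fc_\tau]$ with $R_\tau:=\alpha\int_\tau^T|U_s|ds+\bar\alpha|\bar U_T|$. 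Applying Bayes' formula, using $Z_\tau^{\eta'}=Z_\tau^\eta$ (which holds by definition of $\Dc^c(\eta,\tau)$), and then the inequality (\ref{convexinequality}) with $y=Z_T^{\eta'}/Z_\tau^\eta$ yields
\[
\E_{Q^{\eta'}}[R_\tau\mid\Fc_\tau]\leq \tfrac{1}{\lambda}\,H(Q^{\eta'}|P;\Fc_\tau)+\tfrac{e^{-1}}{\lambda}\tfrac{1}{Z_\tau^\eta}\E_P[e^{\lambda R_\tau}\mid\Fc_\tau],
\]
where $H(Q^{\eta'}|P;\Fc_\tau):=\E_{Q^{\eta'}}[\log(Z_T^{\eta'}/Z_\tau^{\eta'})\mid\Fc_\tau]$. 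The growth condition $h(x)\geq\kappa_1|x|^2-\kappa_2$ and Girsanov (exactly as in (\ref{ineqalityentropy})) give the conditional version
\[
H(Q^{\eta'}|P;\Fc_\tau)\leq \tfrac{1}{2\kappa_1}\E_{Q^{\eta'}}[\textstyle\int_\tau^T h(\eta'_u)\,du\mid\Fc_\tau]+\tfrac{T\kappa_2}{2\kappa_1}.
\]
Combining and choosing $\lambda$ large enough so that $\beta e^{-T\|\delta\|_\infty}-\tfrac{1}{2\lambda\kappa_1}\geq \mu>0$ absorbs the remaining $\E_{Q^{\eta'}}[\int_\tau^T h(\eta'_u)du\mid\Fc_\tau]$ into the positive penalty term, we obtain $\Gamma(\tau,Q^{\eta'})\geq -\tilde B$ with
\[
\tilde B:=\tfrac{1}{\mu}\!\left(\tfrac{T\kappa_2}{2\lambda\kappa_1}+\tfrac{e^{-1}}{\lambda Z_\tau^\eta}\E_P[e^{\lambda R_\tau}\mid\Fc_\tau]\right),
\]
which is independent of $\eta'$. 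Passing to the essential infimum, $(J(\tau,Q^\eta))^{-}\leq \tilde B$, and $\tilde B\in L^1(Q^\eta)$ because $\E_{Q^\eta}[\tilde B]$ reduces via Bayes to a constant multiple of $1+\E_P[e^{\lambda R_\tau}]$, which is finite by Remark~\ref{integrability of U} under Assumption~\textbf{(A'2)}.

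The main delicate point is the conditional entropy bound: one must verify that the Girsanov/Itô computation used in Remark~\ref{rem2}(3) localises correctly on $[\tau,T]$, i.e.\ that the stochastic integral $\int_\tau^\cdot \eta'_u\,dW_u$ is a true $Q^{\eta'}$-martingale once integrated against $h(\eta'_u)du$. This is handled by a standard stopping-time localisation together with Fatou, exploiting nonnegativity of $h$.
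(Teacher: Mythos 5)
Your proof follows essentially the same route as the paper's: the positive part is handled identically via Proposition \ref{prop3.1}, and the negative part reruns the lower bound of Proposition \ref{cont} conditionally on $\Fc_\tau$ — Bayes' formula, the Young-type inequality for $x\ln x$, Girsanov to identify the conditional entropy on $[\tau,T]$ with $\tfrac12\E_{Q^{\eta'}}[\int_\tau^T|\eta'_u|^2du\,|\,\Fc_\tau]$, the growth bound $h(x)\geq\kappa_1|x|^2-\kappa_2$, and a choice of $\lambda$ that absorbs or cancels the $\gamma_\tau(Q^{\eta'})$ term. The only (harmless) differences are that you normalize the density by $Z_\tau^{\eta}$ and use $R_\tau$ in place of $R$, which spares you the paper's extra terms $|\int_0^\tau\eta_u\,dW_u|+\tfrac12\int_0^\tau|\eta_u|^2\,du$ in the bound $B$; but since $\Gamma(\tau,Q^{\eta'})$ is defined with the full cost $c(\cdot,Q^{\eta'})$ on $[0,T]$, you should also carry the $\Fc_\tau$-measurable running-utility term $\alpha\int_0^\tau S_s^\delta U_s\,ds$ (independent of $\eta'$ and $Q^\eta$-integrable) into $\tilde B$.
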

\begin{proof}
By definition, we have $$ J(\tau, Q^\eta) \leq \Gamma (\tau, Q^\eta) \leq \E_ {Q^\eta} [|c(.,Q^\eta)| | \Fc_\tau],$$
which implies that $$ (J(\tau, Q^\eta))^+\leq \E_{Q ^\eta} [|c(.,Q^\eta)| | \Fc_\tau] $$ and so
$ (J(\tau, Q^\eta))^+$ is $ Q^\eta $-integrable by Proposition \ref{prop3.1}.
\\It remains to show that  $(J(\tau, Q^\eta))^-$ is also $Q^\eta $-integrable.
\\ Fix $\eta'\in \Dc^c (Q^\eta, \tau).$
we have:
\begin{equation*}
\begin{split}
\beta E_{Q^{\eta'}}[\dint_0^T \delta_s S_s^\delta (\int_0^s h(\eta'_u)du) ds + S_T^\delta \int_0^T h(\eta'_s)ds|\Fc_\tau]& \geq \beta E_{Q^{\eta'}} [S_T^\delta \int_\tau^T h(\eta'_s)ds|\Fc_\tau]\\& \geq \beta e^{-\|\delta \|_{\infty}T} \gamma_\tau (Q^{\eta'}).
\end{split}
\end{equation*}
Moreover, since $0 \leq S^\delta \leq 1 $ and using Bayes' formula, we have:
\begin{equation*}
\E_{Q^{\eta'}} [\Uc_{0,T}^\delta|\Fc_\tau]\geq -\E_{Q^{\eta'}} [R|\Fc_\tau] =-\frac{1}{Z_\tau^{\eta}} \E_P[Z_T^{\eta'} R|\Fc_\tau].
\end{equation*}
\\Using the inequality (\ref{convexinequality}) and Bayes' formula, we get:
\begin{equation*}
\begin{split}
\E_P[Z_T^{\eta'} R|\Fc_\tau] & \leq \frac{1}{\lambda} \E_P [Z_T^{\eta'}\log Z_T^{\eta'} +e^{-1} e^{\lambda R}|\Fc_\tau]
\\ & = \frac {1}{\lambda} Z_\tau^{\eta'}\E_{Q^{\eta'}} [\log Z_T^{\eta'}|\Fc_\tau]  + \frac {e^{-1}} {\lambda} \E_P [e^{\lambda R}|\Fc_\tau]
\\ & = \frac {1}{\lambda} Z_\tau^{\eta'}\E_{Q^{\eta'}} [\int_0^T\eta'_udW_u-\frac{1}{2}\int_0^T|\eta'|^2_udu|\Fc_\tau]  + \frac {e^{-1}} {\lambda} \E_P [e^{\lambda R}|\Fc_\tau]
\\ & = \frac {1}{\lambda} Z_\tau^{\eta'}(\int_0^\tau\eta'_udW_u-\frac{1}{2}\int_0^\tau|\eta'|^2_udu+\E_{Q^{\eta'}} [\int_\tau^T\eta'_udW_u-\int_\tau^T|\eta'|^2_udu|\Fc_\tau]
\\&+ \E_{Q^{\eta'}}[\frac{1}{2}\int_\tau^T|\eta'|^2_udu|\Fc_\tau])+\frac {e^{-1}} {\lambda} \E_P [e^{\lambda R}|\Fc_\tau].
\end{split}
\end{equation*}
By the Girsanov theorem the process $(\int_0^.\eta'_udW_u-\int_0^.|\eta'|^2_udu)$ is a local $Q^{\eta'}$- martingale and therefore $\E_{Q^{\eta'}} [\int_\tau^T\eta'_udW_u-\int_\tau^T|\eta'|^2_udu|\Fc_\tau]=0.$ Consequently, by using that $Z_\tau^{\eta'}=Z_\tau^{\eta}$, we have
\begin{equation*}
\begin{split}
&\E_P[Z_T^{\eta'} R|\Fc_\tau]  \leq  \frac {1}{\lambda} Z_\tau^{\eta}\Big(\int_0^\tau\eta'_udW_u-\frac{1}{2}\int_0^\tau|\eta'|^2_udu
+\E_{Q^{\eta'}}[\frac{1}{2}\int_\tau^T|\eta'|^2_udu|\Fc_\tau]\Big)+\frac {e^{-1}} {\lambda} \E_P [e^{\lambda R}|\Fc_\tau]
\\& =\frac {1}{\lambda} Z_\tau^{\eta}\Big(\int_0^\tau\eta_udW_u-\frac{1}{2}\int_0^\tau|\eta|^2_udu
+\E_{Q^{\eta'}}[\frac{1}{2}\int_\tau^T|\eta'|^2_udu|\Fc_\tau]\Big)+\frac {e^{-1}} {\lambda} \E_P [e^{\lambda R}|\Fc_\tau]
\\ & \leq \frac {1}{\lambda} Z_\tau^{\eta}\Big(\int_0^\tau\eta_udW_u-\frac{1}{2}\int_0^\tau|\eta|^2_udu
+\E_{Q^{\eta'}}[\frac{1}{2}\int_\tau^T\frac{h(\eta'_u)+\kappa_2}{\kappa_1}du|\Fc_\tau]\Big)+\frac {e^{-1}} {\lambda} \E_P [e^{\lambda R}|\Fc_\tau].
\end{split}
\end{equation*}
Thus, we have
\begin{equation*}
\begin{split}
\E_{Q^{\eta'}} [\Uc_{0,T}^\delta|\Fc_\tau]&\geq -\frac {1}{\lambda} \Big(\int_0^\tau\eta_udW_u-\frac{1}{2}\int_0^\tau|\eta|^2_udu
+\E_{Q^{\eta'}}[\frac{1}{2}\int_\tau^T\frac{h(\eta'_u)+\kappa_2}{\kappa_1}du|\Fc_\tau]\Big)\\&-\frac {e^{-1}} {\lambda}\frac{1}{Z_\tau^{\eta}} \E_P [e^{\lambda R}|\Fc_\tau],
\end{split}
\end{equation*}
and consequently
\begin{equation*}
\begin{split}
\Gamma(\tau,Q^{\eta'})\geq
&\beta e^{-\|\delta \|_{\infty}T} \gamma_\tau (Q^{\eta'})-\frac{1}{\lambda}\E_{Q^{\eta'}}[\frac{1}{2}\int_\tau^T\frac{h(\eta'_u)+\kappa_2}{\kappa_1}du|\Fc_\tau]
\\&-\frac {1}{\lambda} (\int_0^\tau\eta_udW_u-\frac{1}{2}\int_0^\tau|\eta|^2_udu)
 -\frac {e^{-1}} {\lambda}\frac{1}{Z_\tau^{\eta}} \E_P [e^{\lambda R}|\Fc_\tau].
 \\&=(\beta e^{-\|\delta \|_{\infty}T}-\frac{1}{2\lambda\kappa_1}) \gamma_\tau (Q^{\eta'})
\\&-\frac {1}{\lambda} (\int_0^\tau\eta_udW_u-\frac{1}{2}\int_0^\tau|\eta|^2_udu
 +\frac{T\kappa_2}{2\kappa_1})-\frac {e^{-1}} {\lambda}\frac{1}{Z_\tau^{\eta}} \E_P [e^{\lambda R}|\Fc_\tau].
\end{split}
\end{equation*}
Let $ \lambda > 0 $ such that $\beta e^{-\|\delta \|_{\infty}T}-\frac{1}{2\lambda\kappa_1}=0 $ then
\begin{equation*}
\Gamma(\tau,Q^{\eta'})\geq -\frac {1}{\lambda} (\vert\int_0^\tau\eta_udW_u \vert+\frac{1}{2}\int_0^\tau|\eta|^2_udu
 +\frac{T\kappa_2}{2\kappa_1})-\frac {e^{-1}} {\lambda}\frac{1}{Z_\tau^{\eta}} \E_P [e^{\lambda R}|\Fc_\tau]:=-B.
\end{equation*}
Since the random variable $B$ is nonnegative and does not depend on $Q^{\eta'}$, we conclude that
$J(\tau,Q)\geq -B.$ So that $J(\tau,Q)^-\leq B.$
\\It thus remains to be shown that $B\in L^1(Q^\eta).$
\\Under assumptions \textbf{(A1)-(A'2)}, we have
\begin{equation*}
E_{Q^\eta}[\frac{1}{Z_\tau^{\eta}} \E_P [e^{\lambda R}|\Fc_\tau]]=E_P[\E_P [e^{\lambda R}|\Fc_\tau]]=\E_P [e^{\lambda R}]<+\infty.
\end{equation*}
Moreover
\begin{equation*}
E_{Q^\eta}[|\int_0^\tau\eta_udW_u|+\frac{1}{2}\int_0^\tau|\eta|^2_udu]<+\infty.
\end{equation*}
Hence, $B\in L^1(Q^\eta).$
\ep
\end{proof}

\end{document}